\crefname{hypothesis}{Hypothesis}{Hypotheses}
\title{A semismooth Newton stochastic proximal point algorithm with variance reduction\thanks{\funding{A. Milzarek is partly supported by the Fundamental Research Fund -- Shenzhen Research Institute of Big Data (SRIBD) Startup Fund JCYJ-AM20190601 and by the Shenzhen Science and Technology Program under Grant GXWD20201231105722002-20200901175001001.}
}}
\author{Andre Milzarek\thanks{School of Data Science (SDS), The Chinese University of Hong Kong, Shenzhen; Shenzhen Research Institute of Big Data (SRIBD) (\email{andremilzarek@cuhk.edu.cn}).}
\and Fabian Schaipp\thanks{Chair of Mathematical Optimization, Department of Mathematics, Technical University of Munich
  (\email{fabian.schaipp@tum.de}, \email{m.ulbrich@tum.de}).}
\and Michael Ulbrich\footnotemark[3]
}
\newcommand{\prox}[1]{\mathrm{prox}_{#1}}
\newcommand{\onehalf}{\frac{1}{2}}
\newcommand{\oneover}[1]{\frac{1}{#1}}
\newcommand{\ixmap}{\kappa}
\newcommand{\trp}[1]{#1^\top}
\newcommand{\mL}{\bar L}
\DeclareMathOperator{\dom}{dom}
\newcommand{\E}{\mathbb{E}}
\newcommand{\R}{\mathbb{R}}
\let\temp\phi
\let\phi\varphi
\let\varphi\temp
\let\epsilon\varepsilon
\definecolor{cuhkb}{RGB}{219,160,1}
\definecolor{cuhkpl}{RGB}{152,24,147} 
\definecolor{bluep}{RGB}{0,128,255}
\definecolor{greenish}{RGB}{24,140,124}
\definecolor{orangy}{RGB}{237,163,26}
\definecolor{darkblue}{RGB}{0, 47, 167}
\newcommand{\iprod}[2]{\langle #1, #2 \rangle}
\newcommand{\Rn}{\mathbb{R}^n}
\newcommand{\N}{\mathbb{N}}
\DeclareMathOperator*{\argmin}{argmin}
\DeclareMathOperator*{\argmax}{argmax}
\newcommand{\Spp}{\mathbb{S}^n_{++}}
\newcommand{\Rex}{(-\infty,\infty]}
\newif\ifarxiv
\begin{document}

\maketitle

\begin{abstract}
We develop an implementable stochastic proximal point (SPP) method for a class of weakly convex, composite optimization problems.  
The proposed stochastic proximal point algorithm incorporates a variance reduction mechanism and the resulting SPP updates are solved using an inexact semismooth Newton framework. We establish detailed convergence results that take the inexactness of the SPP steps into account and that are in accordance with existing convergence guarantees of (proximal) stochastic variance-reduced gradient methods. Numerical experiments show that the proposed algorithm competes favorably with other state-of-the-art methods and achieves higher robustness with respect to the step size selection.
\end{abstract}

\begin{keywords}
  stochastic optimization, semismooth Newton, stochastic proximal point method
\end{keywords}

\begin{AMS}
  90C26, 90C06, 65K10
\end{AMS}

\section{Introduction}
\label{sec:introduction}

In this work, we consider optimization problems of the form
\begin{equation}
\label{prob:deterministic}
\min_{x \in \R^n}~\psi(x):= f(x) + \phi(x), \quad f(x) := \oneover{N}{\sum}_{i=1}^{N}~f_i(A_ix),
\end{equation}
where $A_i \in \R^{m_i \times n}$ is given and the component functions $f_i : \R^{m_i} \to \R$, $m_i\in\N$, $i \in \{1,\dots,N\}$, are supposed to be continuously differentiable. The mapping $\phi : \Rn \to \Rex$ is convex, proper, and lower semicontinuous. 

Problems of the form \cref{prob:deterministic} have gained significant attention over the past decades in the context of large-scale machine learning \cite{Bottou2010}. Initiated by the pioneering stochastic approximation methods of Robbins and Monro \cite{Robbins1951} and Kiefer and Wolfowitz \cite{Kiefer1952}, more recent  extensions of stochastic (proximal) gradient descent schemes for \cref{prob:deterministic} have been studied, including variance reduction methods such as \texttt{SAGA} \cite{Defazio2014} or \texttt{SVRG} \cite{Johnson2013,Xiao2014}, adaptive methods \cite{Duchi2011,Kingma2015} and several variants \cite{Milzarek2019,Hanzely2018,ShalevShwartz2013}.

In this work, we consider the stochastic proximal point method (SPP) for problem \cref{prob:deterministic}, which can be implicitly expressed as 
\[x^{k+1} = \prox{\alpha_k\phi}(x^k - \alpha_k \nabla f_{\mathcal{S}_k}(x^{k+1})),\]
where $\alpha_k >0$ is a suitable step size and $\nabla f_{\mathcal{S}_k}$ is a stochastic approximation of $\nabla f$.
In this article, we combine the SPP method with a \texttt{SVRG}-type variance reduction strategy and derive novel convergence guarantees. A particular focus is put on the question of how to efficiently compute the SPP update which is an implicit equation in $x^{k+1}$.

\subsection{Background and Related Work} If the function $\psi$ is convex, the famous (deterministic) proximal point algorithm (PPA) for minimizing \cref{prob:deterministic} can be represented as follows
\begin{equation} \label{eq:ppa} x^{k+1} = \prox{\alpha_k\psi}(x^k) := \argmin_{y \in \Rn}~\psi(y) + \frac{1}{2\alpha_k} \|x^k-y\|^2_2, \quad\quad \alpha_k > 0. \end{equation}
Here, the mapping $\prox{\alpha_k\psi} : \Rn \to \Rn$ denotes the well-known proximity operator of the function $\alpha_k \psi$. 
Convergence of the method \cref{eq:ppa} was first studied by Martinet in \cite{Martinet1972} and later extended by Rockafellar in his seminal paper \cite{Rockafellar1976a}; let us also refer to G\"uler \cite{Gueler1991}. Specifically, Rockafellar showed that an inexact version of PPA \cref{eq:ppa}, 
\begin{align} \label{eq:inex-ppa} x^{k+1} \approx \prox{\alpha_k\psi}(x^k), \end{align}
converges whenever one of the following accuracy conditions \vspace{1ex}
\begin{itemize}
\item $\|x^{k+1} - \prox{\alpha_k\psi}(x^k) \| \leq \varepsilon_k$ with $\sum_{k=0}^\infty \varepsilon_k < \infty$ or \vspace{1ex}
\item $\|x^{k+1} - \prox{\alpha_k\psi}(x^k) \| \leq \delta_k \|x^{k+1} - x^k\|$ with $\sum_{k=0}^\infty \delta_k < \infty$, \vspace{1ex}
\end{itemize}
is utilized. Furthermore, under an error bound condition, it is also possible to establish linear convergence if the regularization parameters $\alpha_k$ are chosen sufficiently large, see, e.g., \cite[Thm.\ 2]{Rockafellar1976a}. These strong theoretical properties and the duality-type results derived in \cite{Rockafellar1976} are the foundations of several semismooth Newton-based augmented Lagrangian approaches and PPAs developed in \cite{Zhao2010,Jiang2013, Jiang2014,Yang2015,Chen2016,Li2018} for semidefinite programming, nuclear and spectral norm minimization, and Lasso-type problems. 

More recently, stochastic versions of the PPA have been studied, in particular when the objective is in the form of an expectation. In this line of research, a model-based stochastic proximal point method has been proposed by Asi and Duchi \cite{Asi2019, Asi2019a} for the convex case and by Davis and Drusvyatskiy for the weakly convex, composite case \cite{Davis2019}. SPP has also been analyzed in an incremental framework \cite{Bertsekas2011} and for constrained problems with Lipschitz or strongly convex objective \cite{Patrascu2017}. To the best of our knowledge an effective combination of SPP and variance reduction techniques seems to be unavailable so far.

\subsection{Contributions}
Our main contributions and the core challenges addressed in this article are as follows:
\begin{itemize}
\item We introduce \texttt{SNSPP}, a semismooth Newton stochastic proximal point method with variance reduction for the composite problem \cref{prob:deterministic}. Similar to \texttt{SVRG} \cite{Xiao2014,J.Reddi2016}, we prove linear convergence in the strongly convex case and a sublinear rate is established in the weakly convex case (using constant step sizes). 
\item Semismooth Newton-based PPAs are known to be highly efficient in deterministic problems \cite{Zhao2010,Yang2015,Li2018,Zhang2020}. Our proposed algorithmic strategy benefits from the fast local convergence properties of the semismooth Newton method and allows to further reduce the computational complexity of the occurring subproblems which can be directly controlled by the selected batch sizes. 
\item The inexactness of the stochastic proximal steps is an integral component of our theoretical investigation. We present a unified analysis allowing broader applicability of the (variance reduced) SPP method.
\item Numerical experiments suggest that SPP with variance reduction performs favorably and is more robust in comparison to state-of-the-art algorithms.  
\end{itemize}
\section{Preliminaries}
\label{sec:preliminaries}
For $N \in \N$, we set $[N] := \{1,\dots,N\}$ and denote by $I \in \R^{n \times n}$ the identity matrix. By $\iprod{\cdot}{\cdot}$ and $\|\cdot\|$, we denote the standard Euclidean inner product and norm. The set of symmetric, positive definite $n \times n$ matrices is denoted by $\Spp$. For a given matrix $M \in \Spp$, we define the inner product $\langle x,y \rangle_M := \langle x, My\rangle$ and the induced norm $\|x\|_M := \sqrt{\iprod{x}{x}_M}$. The function $f : \Rn \to \Rex$ is called $\rho$-weakly convex, $\rho > 0$, if the mapping $x \mapsto f(x) + \frac{\rho}{2}\|x\|^2$ is convex. Furthermore, $f$ is called $\mu$-strongly convex, $\mu > 0$, if $f - \frac{\mu}{2}\|\cdot\|^2$ is a convex function. The set $\dom (f) := \{x \in \Rn: f(x) < +\infty\}$ denotes the effective domain of $f$.
The mapping $f$ is called $L$-smooth, if $f$ is differentiable on $\Rn$ and if there exists $L\geq 0$ such that
\begin{align*}
\|\nabla f(y) -\nabla f(x)\| \leq L \|y-x\| \quad \forall~x,y \in \R^n.
\end{align*}
The proximity operator and the Moreau envelope, defined below, are essential ingredients used in this work.
\begin{definition} \label{def:prox}
Let $g: \R^n \to \Rex$ be a proper and closed function and let $M \in \Spp$ be given such that the mapping $x\mapsto g(x) + \onehalf\|x\|^2_M$ is strongly convex. The proximity operator of $g$ is defined via
\begin{align*}
\label{eqn:defn_prox}
\prox{g}^M : \Rn \to \Rn, \quad \prox{g}^M(x) := \argmin_{z\in\Rn}~g(z) + \onehalf \|x-z\|^2_M .
\end{align*}
In addition, suppose that the function $g+\tfrac{1}{2\lambda}\|\cdot\|_M^2$ is strongly
convex for some $\lambda > 0$ and $M \in \Spp$. The associated Moreau envelope of $g$ is then given by
\begin{align}
\label{eqn:defn_moreau}
\mathrm{env}_g^{M,\lambda} : \Rn \to \R, \quad \mathrm{env}_g^{M,\lambda}(x) := \min_{z\in \Rn}~g(z) + \oneover{2\lambda}\|x-z\|^2_M.
\end{align}
        If $g$ is convex, we also use the notations $\prox{g}(x) = \prox{g}^I(x)$ and $\mathrm{env}_g(x) = \mathrm{env}_g^{I,1}(x)$.
\end{definition}
The proximity operator can be uniquely characterized by the optimality conditions of its underlying minimization problem. Let $M$ be positive semidefinite; if $g$ is prox-bounded with proximal subdifferential $\partial g$ (cf.\ \cite[Def.\ 8.45, Prop.\ 8.46]{Rockafellar1998}), then
\begin{equation} \label{eq:prox-char} 
p = \prox{g}^{I+M}(x) \quad \iff \quad p \in x - M(p-x) - \partial g(p).
\end{equation}
%
If $g$ is convex, the proximity operator is firmly nonexpansive \cite[Thm.\ 6.42]{Beck2017}, i.e.,
\begin{equation} \label{eq:nonexp} \|\prox{g}(x) - \prox{g}(y)\|^2 \leq \iprod{x-y}{\prox{g}(x)-\prox{g}(y)} \quad \forall~x,y \in \Rn. \end{equation}
In particular, $\prox{g}$ is Lipschitz continuous with constant $1$. Moreover, by \cite[Thm.\ 6.60]{Beck2017} the Moreau envelope is continuously differentiable with
\begin{align}\label{eqn:moreau_gradient}
\nabla \mathrm{env}_g(x) = x - \prox{g}(x).
\end{align}
For extensive discussions of the proximity operator, the Moreau envelope, and related concepts, we refer to \cite{Moreau1965,Bauschke2011,Beck2017}. 
For a function $g: \R^n \to \Rex$, the conjugate of $g$ is defined by $g^\ast:\R^n\to \Rex$, $g^\ast(x) := \sup_{z\in \R^{n}}~\langle z,x\rangle -g(z)$.
\begin{proposition}
\label{prop:strongly_convex_lipschitz}
Let $g: \R^n \to \Rex$ be proper and closed. If $g$ is $\mu$-strongly convex, then its conjugate $g^\ast$ is closed, convex, proper, and Fr\'{e}chet differentiable and its gradient is given by $\nabla g^\ast(x) = \argmax_{z\in \R^{n}}\,\langle z,x\rangle -g(z)$.
In addition, $\nabla g^\ast : \Rn \to \Rn$ is Lipschitz continuous with Lipschitz constant $\mu^{-1}$.
\end{proposition}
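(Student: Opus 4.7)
My plan is to proceed in four steps: basic properties of $g^\ast$, attainment of the supremum, identification of $\nabla g^\ast$ via the Fenchel--Young inequality, and finally the Lipschitz estimate via the strong convexity of $g$.

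First I would note that, as a pointwise supremum of the affine functions $x\mapsto \langle z,x\rangle - g(z)$, the conjugate $g^\ast$ is automatically convex and lower semicontinuous (hence closed), and since $g$ is proper we have $g^\ast > -\infty$ everywhere. To get properness it suffices to exhibit a point where $g^\ast$ is finite. Because $g$ is $\mu$-strongly convex and proper and closed, it is coercive; more precisely, for any fixed $z_0 \in \dom(g)$ strong convexity yields $g(z) \geq g(z_0) + \langle v_0, z - z_0 \rangle + \tfrac{\mu}{2}\|z-z_0\|^2$ for some $v_0$, and plugging this bound into the definition shows $\langle z,x\rangle - g(z) \to -\infty$ as $\|z\|\to\infty$ for every $x$. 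Hence $g^\ast(x)$ is finite for every $x \in \R^n$ and $g^\ast$ is proper.

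Second, the same coercivity argument shows that for each $x$ the strongly concave, upper semicontinuous function $z \mapsto \langle z,x\rangle - g(z)$ attains its supremum at a unique maximizer $z(x)$, so the formula $z(x) = \argmax_z \langle z,x\rangle - g(z)$ is well defined. Third, I would identify $z(x)$ as the unique subgradient of $g^\ast$ at $x$ via Fenchel--Young: for any $y \in \Rn$,
\begin{equation*}
g^\ast(y) \;\geq\; \langle z(x), y \rangle - g(z(x)) \;=\; g^\ast(x) + \langle z(x), y - x\rangle,
\end{equation*}
so $z(x) \in \partial g^\ast(x)$, and conversely any $v \in \partial g^\ast(x)$ satisfies $x \in \partial g(v)$ which, together with the fact that the inverse of the subdifferential of a strongly convex $g$ is single-valued, forces $v = z(x)$. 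Thus $\partial g^\ast(x) = \{z(x)\}$; since $g^\ast$ is a finite convex function on all of $\Rn$, a singleton subdifferential implies G\^ateaux differentiability, which on $\Rn$ coincides with Fr\'echet differentiability for convex functions, and $\nabla g^\ast(x) = z(x)$.

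Finally, the Lipschitz estimate is the step I regard as the core computation, though it is short. Given $x_1,x_2$, set $z_i = \nabla g^\ast(x_i)$, so that $x_i \in \partial g(z_i)$. By $\mu$-strong convexity of $g$, the monotonicity inequality for $\partial g$ gives
\begin{equation*}
\langle x_1 - x_2, z_1 - z_2 \rangle \;\geq\; \mu \|z_1 - z_2\|^2,
\end{equation*}
and applying Cauchy--Schwarz on the left-hand side and dividing by $\|z_1 - z_2\|$ (the case $z_1 = z_2$ is trivial) yields $\|z_1 - z_2\| \leq \mu^{-1}\|x_1 - x_2\|$, which is the claimed Lipschitz bound. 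The main obstacle is really just the passage from a singleton subdifferential to Fr\'echet differentiability; everything else is a direct consequence of strong convexity and Fenchel--Young duality.
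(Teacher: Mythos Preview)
Your proof is correct. The paper, however, does not actually argue anything: it simply cites \cite[Cor.~4.21]{Beck2017} and \cite[Prop.~17.36]{Bauschke2011} for the differentiability and the gradient formula, and \cite[Prop.~13.11, Thm.~13.32, Thm.~18.15]{Bauschke2011} for the Lipschitz bound. So your approach is genuinely different in character---you give a self-contained argument via coercivity, Fenchel--Young, and strong monotonicity of $\partial g$, whereas the paper delegates everything to standard references. Your route has the advantage of being transparent (in particular, the Lipschitz estimate via $\langle x_1-x_2,z_1-z_2\rangle\ge\mu\|z_1-z_2\|^2$ makes the role of strong convexity explicit), while the paper's citations are terser and avoid reproving textbook material. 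One minor point worth tightening: when you invoke ``for some $v_0$'' in the coercivity step, you are implicitly using that $\partial g(z_0)\neq\emptyset$; this holds for any $z_0$ in the relative interior of $\dom(g)$, which is nonempty since $g$ is proper, closed, and convex.
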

\begin{proof}
The first part is a consequence of \cite[Cor.\ 4.21]{Beck2017} and \cite[Prop.\ 17.36]{Bauschke2011}. The remaining properties follow from \cite[Prop.\ 13.11, Thm.\ 13.32, and Thm.\ 18.15]{Bauschke2011}.
\end{proof}

Let $F:\R^n \to \R^m$ be a locally Lipschitz function. We use $\partial F$ to denote its Clarke subdifferential ($\partial F$ reduces to the standard subdifferential if $m=1$ and $F$ is convex). As computing elements of $\partial F$ can be challenging, we will make use of generalized derivatives $\hat{\partial} F$ that share similar properties. 
We call $\hat\partial F$ a \textit{surrogate generalized differential} of $F$. Following \cite{Qi1993,Ulbrich2011}, we present a definition of semismoothness of $F$. 

\begin{definition}
Let $F: V \to \R^m$ be locally Lipschitz and let $V\subset \R^n$ be an open set. $F$ is called semismooth at $x\in V$ (with respect to $\partial F$), if $F$ is directionally differentiable
at $x$ and if it holds that
\begin{displaymath}
{\sup}_{M \in \partial F(x+s)}~\|F(x+s) -F(x) - Ms\| = o(\|s\|) \quad \text{as}~ s\rightarrow 0.
\end{displaymath}
Moreover, for $\nu>0$, $F$ is called $\nu$-order semismooth
(strongly semismooth if $\nu=1$) at $x\in V $ (w.r.t. $\partial F$), if $F$ is directionally differentiable
at $x$ and we have
\begin{displaymath}
{\sup}_{M \in \partial F(x+s)}~\|F(x+s) -F(x) - Ms\| = \mathcal{O}(\|s\|^{1+\nu}) \quad \text{as}~ s\rightarrow 0.
\end{displaymath}
\end{definition}
For problem \cref{prob:deterministic}, we introduce the proximal gradient mapping as a measure of stationarity, i.e., for $\alpha>0$, we define
\[F^\alpha_\mathrm{nat}: \Rn \to \Rn, \quad F^\alpha_\mathrm{nat}(x) := x-\prox{\alpha \phi}(x-\alpha \nabla f(x)) \] 
and $F_\mathrm{nat}(x):=F^1_\text{nat}(x)$ (cf. \cite{Ghadimi2016,Nesterov2013}). Clearly, due to \cref{eq:nonexp}, if $f$ is $L$-smooth, then the function $F_\mathrm{nat}$ is Lipschitz continuous with constant $2+L$.
\section{The Stochastic Proximal Point Method}
\label{sec:stochastic_proximal_point}

\subsection{Assumptions} We first specify the basic assumptions under which we construct and study our stochastic proximal point method. Throughout this paper, we assume that the functions $f_i : \R^{m_i} \to \R$, $i \in [N]$, are continuously differentiable and $\phi : \Rn \to \Rex$ is a closed, convex, and proper mapping. Further conditions on $f$ and $\phi$ are summarized and stated below.

\begin{assumption}\label{asum:main} Let $f$ be defined as in \cref{prob:deterministic}. We assume:\
\begin{enumerate}[label=\textup{\textrm{(A\arabic*)}},topsep=0pt,itemsep=0ex,partopsep=0ex]
\item \label{A1} The functions $f_i: \R^{m_i} \to \R$ are  $L_i$-smooth and $\gamma_i$-weakly convex for all $i$.
\item \label{A2} The objective function $\psi$ is bounded from below by $\psi^\star:=\inf_x \psi(x)$.
\item \label{A3} The mapping $x \mapsto \prox{\alpha\phi}(x)$ is semismooth for all $\alpha > 0$ and all $x\in\R^n$. 
\end{enumerate}
\end{assumption}

We note that $L_i$-smoothness already ensures weak convexity of $f_i$, $i \in [N]$ (but with a potentially different constant). Let us also set $\hat{f}_i (z) := f_i(z) + ({\gamma_i}/{2})\|z\|^2$. 
We work with the following assumptions for the conjugates $\hat{f}_i^\ast$:
\begin{assumption}\label{asum:conjugate} \
\begin{enumerate}[label=\textup{\textrm{(A\arabic*)}},topsep=0pt,itemsep=0ex,partopsep=0ex,start=4]
\item \label{A4} The functions $\hat{f}_i^\ast$ are essentially differentiable (cf. \cite{Goebel2008}) with locally Lipschitz continuous gradients on the sets $\mathcal{D}_i := \mathrm{int}(\dom(\hat{f}_i^\ast)) \neq \emptyset$, $i \in [N]$.
\item \label{A5} The mappings $\nabla \hat{f}^\ast_i$ are semismooth on $\mathcal{D}_i$ for all $i$.
\end{enumerate}
\end{assumption}

By \cite[Thm.\ 18.15]{Bauschke2011}, condition \ref{A1} guarantees that the mappings $\hat{f}_i^\ast$ are $1/(L_i+\gamma_i)$-strongly convex on $\dom(\hat{f}_i^\ast)$. 
This, together with \ref{A4}, ensures that each $\hat{f}_i^\ast$ has uniformly positive definite second derivatives, i.e., there exists $\mu_\ast \geq \min_{i\in[N]} 1/(L_i+\gamma_i) > 0$ such that for all $i \in [N]$ 
\begin{align}
\label{eqn:unif_pos_def}
\iprod{h}{M_i(z)h} \geq \mu_\ast \|h\|^2  \quad \forall ~h \in \R^{m_i}, \quad \forall~M_i(z) \in \partial (\nabla \hat{f}^\ast_i)(z), \quad \forall~ z \in \mathcal{D}_i,
\end{align}
see \cite[Ex.\ 2.2]{HiriartUrruty1984}. Moreover, \ref{A4} implies that each $\hat{f}_i$ is essentially locally strongly convex \cite[Cor.\ 4.3]{Goebel2008}. Next, we state two stronger versions of \ref{A3} and \ref{A5}:\vspace{.5ex}
\begin{assumption}\label{asum:semismooth_alternative} \
\begin{enumerate}[label=\textup{\textrm{(\~{A}\arabic*)}},topsep=0pt,itemsep=0ex,partopsep=0ex,start=3]
\item \label{tA3} For every $\alpha > 0$ the proximal operator $ x \mapsto \prox{\alpha \phi}(x)$ is $\nu$-order semismooth on $\Rn$ with $0<\nu \leq 1$.
\setcounter{enumi}{4}
\item \label{tA5} The mappings $z \mapsto \nabla \hat{f}^\ast_i(z)$ are $\nu$-order semismooth on $\mathcal{D}_i$ with $0<\nu \leq 1$ for all $i$. \vspace{.5ex}
\end{enumerate}
\end{assumption}
\vspace{.5ex}

In the stochastic optimization literature, convexity and/or $L$-smoothness are standard assumptions for the component functions $f_i$ (see \cite{Defazio2014,Xiao2014,Ghadimi2016,J.Reddi2016,Asi2019}). In contrast to other recent works on stochastic proximal point methods, we neither assume convexity of $\psi$ (as in, e.g., \cite{Asi2019}) nor Lipschitz continuity of $f$ (as in, e.g., \cite{Davis2019}).\\
The additional condition \ref{A4} and the semismoothness properties \ref{A3} and \ref{A5} (or \ref{tA3} and \ref{tA5}, respectively) hold for many classical loss functions and regularizers. In fact, assumption \ref{A3} is satisfied for (group) sparse regularizations based on $\ell_1$- or $\ell_2$-norms or low rank terms such as the nuclear norm. More generally, semismoothness of the proximity operator can be guaranteed when $\phi$ is semialgebraic or tame. We refer to \cite{Bolte2009,milzarek2016numerical} for a detailed discussion of this observation. Strong semismoothness of $\prox{\alpha \phi}$ can be ensured whenever $\prox{\alpha \phi}$ is a piecewise $\mathcal{C}^2$-function (see \cite[Prop.\ 2.26]{Ulbrich2011}). For instance, if $\phi(x) = \lambda \|x\|_1$ is an $\ell_1$-regularization with $\lambda > 0$, then the associated proximity operator is the well-known soft-thresholding operator which is piecewise affine. If every mapping $\nabla \hat{f}^\ast_i$ is piecewise $\mathcal{C}^1$, then assumption \ref{A5} holds and \ref{tA5} is satisfied with $\nu=1$ if all $\nabla \hat{f}^\ast_i$, $i \in [N]$, are piecewise $\mathcal{C}^2$.

\subsection{Algorithmic Framework} We now motivate and develop our algorithmic approach in detail. 

\textit{Stochastic Proximal Point Steps.} Our core idea is to perform stochastic proximal point updates that mimic the classical proximal point iterations, \cite{Martinet1970,Martinet1972,Rockafellar1976}, for minimizing the potentially nonconvex and nonsmooth objective function $\psi$ in \cref{prob:deterministic}:
\[ x^{k+1} = \prox{\alpha_k \psi}(x^k), \]
where $\alpha_k > 0$ is a suitable step size. While $f$ is possibly nonconvex, we can conclude from \ref{A1} that $x \mapsto f_i(A_i x) + \frac{\gamma_i}{2}\|A_i(x-z)\|^2$ is a convex mapping for every $z\in\R^n$. Hence, setting $M_N:=\oneover{N} \sum_{i =1}^{N} \gamma_i \trp{A_i} A_i$, the step
\begin{align*}
\label{eqn:update_x_deterministic}
x^{k+1} &= \prox{\alpha_k\psi}^{I+\alpha_k M_N}(x^k)\\
&=\argmin_x~\psi(x) + \oneover{2N}{\sum}_{i=1}^{N}\gamma_i \|A_i(x-x^k)\|^2 + \oneover{2\alpha_k}\|x-x^k\|^2
\end{align*}
is well-defined. Utilizing \cref{eq:prox-char}, we have $p = x^{k+1}$ if and only if $p \in [x^k- \alpha_k \nabla f(p) - \alpha_k M_N(p-x^k)] - \alpha_k \partial \phi(p)$ and, hence, the proximal point update can be equivalently rewritten as the following implicit proximal gradient-type step
\begin{equation} \label{eq:ppa-imp} x^{k+1} = \prox{\alpha_k\phi}(x^k-\alpha_k \nabla f(x^{k+1}) - \alpha_k M_N(x^{k+1}-x^k)).
\end{equation}
This implicit iteration forms the conceptual basis of our method. However, as our aim is to solve the finite-sum problem \cref{prob:deterministic} in a stochastic fashion, we will use stochastic oracles to approximate the full gradient $\nabla f$ in each iteration \cite{Ghadimi2013, Ghadimi2016, Milzarek2019}. In our case, the function $f$ corresponds to an empirical expectation and thus, sampling a random subset of summands $f_i(A_i \cdot)$ can be understood as a possible stochastic oracle for $f$ and $\nabla f$. 
Specifically, for some given tuple $\mathcal S \subseteq [N]$, we can consider the following stochastic variants of $f$, $\nabla f$, and $\psi$:
\[ f_{\mathcal{S}}(x) := \oneover{|\mathcal S|}{\sum}_{i\in\mathcal S} f_{i}(A_{i} x), \quad \nabla f_{\mathcal{S}}(x) := \oneover{|\mathcal S|}{\sum}_{i\in\mathcal S} A_i^\top \nabla f_{i}(A_{i} x), \] 
and $\psi_\mathcal{S}(x) := f_\mathcal{S}(x) + \phi(x)$. Let $\mathcal S_k \subseteq [N]$ be the tuple drawn randomly at iteration $k$. The stochastic counterpart of the update \cref{eq:ppa-imp} is then obtained by replacing the gradient $\nabla f$ with the estimator $\nabla f_{\mathcal{S}_k}$ and $M_N$ with $M_{\mathcal{S}_k}:= |\mathcal S_k|^{-1} \sum_{i\in \mathcal{S}_k} \gamma_i \trp{A_i} A_i$. This yields
\begin{equation}
\label{eqn:update_x_sto}
x^{k+1} = \prox{\alpha_k \phi}(x^k-\alpha_k \nabla f_{\mathcal{S}_k}(x^{k+1}) -
{\alpha_k} M_{\mathcal S_k} (x^{k+1}-x^k)).
\end{equation}
This step can also be interpreted as a stochastic proximal point iteration 
\[ x^{k+1} = \prox{\alpha_k\psi_{\mathcal S_k}}^{I+\alpha_k M_{\mathcal S_k}}(x^k) \] 
for the sampled objective function $\psi_{\mathcal S_k}$. Consequently, the update \cref{eqn:update_x_sto} can be seen as a combination of the stochastic model-based proximal point frameworks derived in \cite{Asi2019,Davis2019} and of variable metric proximal point techniques \cite{Bonnans1995,Parente2008}. 

\textit{Incorporating Variance Reduction.} Variance reduction has proven to be a powerful tool in order to accelerate stochastic optimization algorithms \cite{Defazio2014,J.Reddi2016,Xiao2014,Hanzely2018}. Similar to \cite{J.Reddi2016}, we consider \texttt{SRVG}-type stochastic oracles that additionally incorporate the following gradient information in each iteration
\begin{align}
\label{eqn:defn_d_Sk}
v^k :=\nabla f(\tilde{x})-\nabla f_{\mathcal{S}_k}(\tilde{x}),
\end{align}
where $\tilde{x} \in \Rn$ is a reference point that is generated in an outer loop. 
This leads to stochastic proximal point-type updates of the form
\begin{equation}
\label{eqn:update_x_stochastic}
x^{k+1} = \prox{\alpha_k \phi}(x^k-\alpha_k [\nabla f_{\mathcal{S}_k}(x^{k+1}) + v^{k}] -
{\alpha_k} M_{\mathcal S_k} (x^{k+1}-x^k)).
\end{equation}
Our subsequent analysis and discussion focuses on this general formulation.

\textit{An Implementable Strategy for Performing the Implicit Step \cref{eqn:update_x_stochastic}}. We now introduce an alternative equation-based characterization of the implicit update \cref{eqn:update_x_stochastic}. Let us set $b_k := |\mathcal S_k|$ and let $(\ixmap_k(1),\dots,\ixmap_k(b_k))$
enumerate the elements of the tuple $\mathcal S_k$. We will often
abbreviate $\ixmap_k$ by $\ixmap$.
We now define $\xi^{k+1} = (\xi_1^{k+1}, \dots, \xi_{b_k}^{k+1})$ by
\begin{equation}\label{eqn:update_xi_stochastic}
\xi_i^{k+1} := \nabla \hat{f}_{\ixmap(i)}(A_{\ixmap(i)} x^{k+1}) = \nabla f_{\ixmap(i)}(A_{\ixmap(i)} x^{k+1}) + \gamma_{\ixmap(i)} A_{\ixmap(i)} x^{k+1} \quad i \in [b_k].
\end{equation}
Under assumption \ref{A1}, \cite[Thm.\ 4.20]{Beck2017} yields
\begin{displaymath}
\xi_i^{k+1} = \nabla \hat{f}_{\ixmap(i)}(A_{\ixmap(i)} x^{k+1}) \quad \iff \quad \nabla \hat{f}^\ast_{\ixmap(i)}(\xi_i^{k+1}) = A_{\ixmap(i)} x^{k+1}.
\end{displaymath}
Thus, setting $\hat v^k := \alpha_k (v^k-M_{\mathcal S_k}x^k)$, the step \cref{eqn:update_x_stochastic} is equivalent to the system
\begin{equation} \label{eqn:nonlinear_system_stochastic} \left[ \begin{array}{l} x^{k+1} = \prox{\alpha_k \phi}\left(x^k- \frac{\alpha_k}{b_k} \sum_{i =1}^{b_k} \trp{A_{\ixmap(i)}}\xi_i^{k+1}-\hat{v}^k\right), \\[1ex]
\nabla \hat{f}_{\ixmap(i)}^\ast(\xi_i^{k+1}) = A_{\ixmap(i)}\prox{\alpha_k \phi}\left(x^k-\frac{\alpha_k}{b_k}\sum_{i =1}^{b_k} \trp{A}_{\ixmap(i)}\xi_i^{k+1} - \hat{v}^k \right) \quad \forall~i \in [b_k]. \end{array} \right. 
\end{equation}
Similar to \cite{Zhao2010,Li2018}, we use a semismooth Newton method to solve the system of nonsmooth equations defining $\xi^{k+1}$ in an efficient way. Importantly, the dimension of this system and of $\xi^{k+1}$ is controlled by the batch size $b_k$ which is an advantage if $b_k\ll n$. We allow approximate solutions of the system \cref{eqn:nonlinear_system_stochastic} which results in inexact proximal steps. This potential inexactness is an important component of our algorithmic design and convergence analysis that has not been considered in other stochastic proximal point methods \cite{Asi2019,Davis2019}.
The full method is shown in \cref{alg:snspp}. The semismooth Newton method for \cref{eqn:nonlinear_system_stochastic} is specified and discussed in the next section. In this article, we primarily focus on the variance-reduced update \cref{eqn:update_x_stochastic}, yet the technique and results presented in \cref{sec:subproblem_newton} also hold true for the general update \cref{eqn:update_x_sto}.

\textit{The Full Stochastic Setup and Stochastic Assumptions.} We now formally specify the notion of admissible stochastic oracles for our problem.
\begin{definition}
\label{def:admissible_sampling}
Let $\mathcal{S} \sim \mathbb{P}$ be a $b$-tuple of elements of
$[N]$, where $b\in[N]$ is fixed. Let $\ixmap(i)\in[N]$ denote
the random number in the $i$-th position of $\mathcal{S}$.
We call $\mathcal{S} \sim \mathbb{P}$ an admissible sampling procedure if,
for all $z_i \in \R^\ell$, $i\in [N]$, $\ell\in\mathbb{N}$, it holds that
$\mathbb{E}_\mathbb{P} [z_\mathcal{S}] = \oneover{N}\sum_{i=1}^{N} z_i$
where $z_\mathcal{S} := \oneover{b} \sum_{i=1}^{b} z_{\ixmap(i)}$.
\end{definition}
If $\mathcal{S}\sim \mathbb{P}$ is an admissible sampling procedure, then we have $\E_\mathbb{P}[f_{\mathcal{S}}(x)] = f(x)$ and $\mathbb{E}_\mathbb{P} [\nabla f_\mathcal{S}(x)] = \nabla f(x)$ for all $x \in \Rn$.
%
In the simplest case, we can choose $\mathcal{S}$ by drawing $b$ elements from $[N]$ under a uniform distribution (cf.\ \cite{Xiao2014} for a similar setting). This is an admissible sampling procedure in the sense of \cref{def:admissible_sampling}, regardless of whether we draw with or without replacement (cf.\ \cite[\S2.8]{Lohr2010}). 
\begin{algorithm}[t]
\caption{\texttt{SNSPP}}
\label{alg:snspp}
\begin{algorithmic}
\REQUIRE $\tilde{x}^0 \in \R^n$, $m, S \in \mathbb{N}$, and,
for $s=0,\ldots,S$, $k=0,\ldots,m-1$,
step sizes $\alpha_{k}^{s}>0$, batch sizes $b_{k}^{s}\in\N$,
and tolerances $\epsilon_{k}^{s}\ge 0$.
\FOR{$s=0,1,2,\dots,S$}
\STATE Set $x^0 := x^{s,0} := \tilde{x} := \tilde{x}^{s}$, and, for $0\le k<m$,
$\alpha_{k}:=\alpha_{k}^{s}$, $b_{k}:=b_{k}^{s}$, and $\epsilon_{k}:=\epsilon_{k}^{s}$.
\FOR{$k = 0,1,2,\dots,m-1$}
\STATE \textbf{(Sampling)} Sample $\mathcal{S}_k=\mathcal{S}_{k}^s$ with $|\mathcal{S}_k|=b_k$ and set \vspace{0.5ex}
\begin{center} $v^{k}:=v^{s,k} := \nabla f(\tilde{x}) - \nabla f_{\mathcal{S}_k}(\tilde{x})$,\quad $\hat{v}^k := \hat{v}^{s,k} := \alpha_{k} (v^k-M_{\mathcal S_k}x^k)$. \vspace{0.5ex} \end{center}
\STATE \textbf{(Solve subproblem)} Compute $\xi^{k+1}=\xi^{s,k+1}$ by invoking \cref{alg:semismooth_newton} with input $x^k, \alpha_{k}, \mathcal{S}_k, -\hat{v}^k$ and $\epsilon_{k}$.
\STATE \textbf{(Update)} Set $x^{k+1}:=x^{s,k+1} := \prox{\alpha_{k} \phi}\left(x^k -
\frac{\alpha_{k}}{b_{k}}\sum_{i=1}^{b_{k}} \trp{A}_{\ixmap(i)} \xi_i^{k+1}-\hat{v}^k\right)$.
\ENDFOR
\STATE Option I: Set $\tilde{x}^{s+1} := x^{m}$.
\STATE Option II: Set $\tilde{x}^{s+1} := \frac{1}{m} \sum_{k=1}^{m} x^k$.
\ENDFOR
\RETURN $\tilde{x}^{S+1}$; $x_{\pi}$ drawn uniformly from
$\{x^{s,k}\}_{0\le k<m}^{0\le s\le S}$.
\end{algorithmic}
\end{algorithm}
\begin{remark}
Note that $x^k$, $\alpha_k$, etc., serve as abbreviations when the value of
$s$ is clear. The notation $x^{s,k}$, $\alpha_k^s$, etc., can be used
to highlight the full $(s,k)$-dependence.
\end{remark}

\section{A Semismooth Newton Method for Solving the Subproblem} \label{sec:subproblem_newton}
In the following, we assume that we are given a $b$-tuple
$\mathcal{S}=(\ixmap(1),\ldots,\ixmap(b))$ of elements of $[N]$,
a step size $\alpha >0$, and vectors $x,v\in \R^n$. Let $m_{\mathcal{S}} := \sum_{i=1}^{b}m_{\ixmap(i)}$ denote the dimension of the subproblem and let $\mathcal D = \prod_{i \in \mathcal S} \mathcal D_i \subseteq \R^{m_{\mathcal{S}}}$. Now, the second line in \cref{eqn:nonlinear_system_stochastic} corresponds to a system of nonlinear equations which can be reformulated as
\begin{equation}
\label{eqn:newton_equation}
\mathcal{V}(\xi) = 0, 
\end{equation}
where $\mathcal{V}: \mathcal{D} \to \R^{m_{\mathcal{S}}}$, $\mathcal{V}(\xi) :=\trp{\trp{(\mathcal{V}_1(\xi)}, \dots, \trp{\mathcal{V}_{b}(\xi)})}$, and $\xi := \trp{(\trp{\xi}_1, \dots, \trp{\xi}_{b})}$. Setting $\mathcal{A}_\mathcal{S} := \oneover{b} \trp{(\trp{A}_{\ixmap(1)},\ldots,\trp{A}_{\ixmap(b)})} \in \R^{m_\mathcal{S}\times n}$, each $\mathcal{V}_i$ is defined via
\begin{equation}
\label{eqn:definition_V}
\mathcal{V}_i : \mathcal{D} \to \R^{m_{\ixmap(i)}}, \quad \mathcal{V}_i(\xi) = \nabla \hat{f}_{\ixmap(i)}^\ast(\xi_i) - A_{\ixmap(i)} \prox{\alpha
\phi}\left(x - \alpha\mathcal{A}_\mathcal{S}^\top \xi +v \right).
\end{equation}
The Newton step of this system is given by 
\begin{equation} \label{eq:sn-step}
\mathcal{W}(\xi)d = - \mathcal{V}(\xi),
\end{equation}
where $\mathcal{W}(\xi) \in \hat{\partial} \mathcal{V}(\xi)$ is an element of the surrogate differential $\hat{\partial} \mathcal{V}$ defined via
\begin{align}
\hat{\partial} \mathcal{V}(\xi) := \Big\{&\mathrm{Diag}\left(H_i(\xi_i)_{i = 1, \dots, b}\right) + \alpha b \mathcal{A}_\mathcal{S}  U(\xi) \trp{\mathcal{A}_\mathcal{S}}
~~\big \vert \nonumber\\
&\quad U(\xi) \in \partial \prox{\alpha \phi}\bigl(x - \alpha
\trp{\mathcal{A}}_\mathcal{S}\xi +v \bigr),\; H_i(\xi_i) \in \partial(\nabla \hat{f}_{\ixmap(i)}^\ast)(\xi_i)
\; \forall~i \in [b] \Big\}. \nonumber
\end{align}
We first present several basic properties of the operators and functions involved in the Newton step \cref{eq:sn-step}. The nonexpansiveness of the proximity operator \cref{eq:nonexp} and \cite[Prop.\ 2.3]{Jiang1995} imply the next result (see also \cite[Lem.\ 3.3.5]{milzarek2016numerical}). 

\begin{proposition}
\label{prop:prox_phi_semidef}
Let $\alpha >0$ and $x \in \R^n$ be given. Each element $U \in \partial \prox{\alpha \phi}(x)$ is a symmetric and positive semidefinite $n \times n$ matrix.
\end{proposition}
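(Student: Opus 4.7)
The plan is to reduce the claim to verifying symmetry and positive semidefiniteness at points of classical differentiability of $\prox{\alpha\phi}$, and then lift it to the full Clarke generalized Jacobian by a standard closure argument. Since $\phi$ is closed, convex, and proper, the firm nonexpansiveness \cref{eq:nonexp} implies that $P := \prox{\alpha\phi}$ is $1$-Lipschitz on $\R^n$. By Rademacher's theorem, $P$ is classically differentiable on a set $\Omega \subseteq \R^n$ of full Lebesgue measure, and
\[
\partial P(x) = \mathrm{conv}\bigl\{\lim_{k\to\infty} DP(y^k) \;:\; y^k \to x,\ y^k \in \Omega\bigr\}.
\]
Since symmetry and positive semidefiniteness are both preserved under limits and convex combinations, it suffices to establish both properties for $J := DP(y)$ at an arbitrary $y \in \Omega$.

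For positive semidefiniteness, I would specialize \cref{eq:nonexp} to the pair $(y + th, y)$ for $h \in \R^n$ and $t > 0$, insert the first-order expansion $P(y+th) - P(y) = tJh + o(t)$, divide by $t^2$, and let $t \downarrow 0$. The left-hand side becomes $\|Jh\|^2$ and the right-hand side $\langle h, Jh\rangle$, yielding $\|Jh\|^2 \le \langle h, Jh\rangle$ for every $h$. In particular $\langle h, Jh\rangle \ge 0$, so the symmetric part of $J$ is positive semidefinite.

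For symmetry, I would invoke the Moreau envelope identity \cref{eqn:moreau_gradient}, which, applied to $\alpha\phi$, reads $P(x) = x - \nabla \env_{\alpha\phi}(x)$. Differentiability of $P$ at $y$ is equivalent to differentiability of $\nabla \env_{\alpha\phi}$ at $y$, i.e., twice differentiability of the convex $C^{1,1}$ function $\env_{\alpha\phi}$ at $y$. The resulting Hessian $\nabla^2 \env_{\alpha\phi}(y)$ is symmetric, and hence $J = I - \nabla^2 \env_{\alpha\phi}(y)$ is symmetric as well. Combined with the previous step, this gives symmetric positive semidefiniteness of every $J = DP(y)$ with $y \in \Omega$, and the general statement for $U \in \partial P(x)$ follows by the closure argument above.

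The most delicate step is the symmetry part, as it requires symmetry of the Hessian at an isolated point of second-order differentiability rather than assuming $\env_{\alpha\phi}$ to be $C^2$ in a neighborhood; convexity and the $C^{1,1}$ regularity of $\env_{\alpha\phi}$ together with the symmetric second-order Taylor expansion of convex functions guarantee this. Alternatively, both conclusions can be extracted directly from \cite[Prop.\ 2.3]{Jiang1995} applied to the firmly nonexpansive operator $P$, which is the route suggested by the citation in the paper and which also provides the spectral bound $\sigma(U) \subseteq [0,1]$ as a byproduct.
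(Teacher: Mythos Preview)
Your proposal is correct. The paper itself does not give a proof at all: it simply invokes firm nonexpansiveness \cref{eq:nonexp} together with \cite[Prop.\ 2.3]{Jiang1995} (and a pointer to \cite[Lem.\ 3.3.5]{milzarek2016numerical}) and moves on. Your write-up effectively unpacks what lies behind that citation: Rademacher to reduce to points of differentiability, firm nonexpansiveness to get $\|Jh\|^2 \le \langle h, Jh\rangle$ (hence positive semidefiniteness, and in fact the spectral bound $\sigma(J)\subseteq[0,1]$), the Moreau-envelope identity to recognize $J$ as $I$ minus a Hessian, and closure under limits/convex hulls to pass to $\partial P(x)$. This is more self-contained than the paper's treatment, and you correctly flag the one genuinely subtle point---symmetry of $D\nabla\env_{\alpha\phi}(y)$ at an isolated point of second-order differentiability. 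That symmetry actually holds for any $C^1$ function whose gradient is Fr\'echet differentiable at a point (a pointwise Schwarz-type lemma; convexity is not needed, though your Alexandrov/convex-$C^{1,1}$ route also works). Since you also mention that one can simply appeal to \cite[Prop.\ 2.3]{Jiang1995}, your proposal subsumes the paper's argument.
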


\begin{proposition}
Suppose that the conditions \ref{A3}, \ref{A4}, and \ref{A5} are satisfied. Let $\mathcal{S}$ be a $b$-tuple of elements of $[N]$, and let $\alpha>0$ and $x,v\in\R^n$ be given. Then, the function $\mathcal{V}$ is semismooth on $\mathcal{D}$ w.r.t. $\hat{\partial} \mathcal{V}$. If \ref{tA3} and \ref{tA5} hold instead of \ref{A3} and \ref{A5}, $\mathcal{V}$ is $\nu$-order semismooth w.r.t. $\hat{\partial} \mathcal{V}$. 
\end{proposition}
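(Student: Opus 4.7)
The plan is to decompose $\mathcal{V}$ into two pieces whose (higher-order) semismoothness can be verified individually and then to reassemble them via the standard sum and chain rules for semismooth functions (see, e.g., \cite{Ulbrich2011}). Write $\mathcal{V}(\xi) = G(\xi) - \mathcal{B}_\mathcal{S}\, \prox{\alpha\phi}(L(\xi))$, where $G(\xi)$ is the block-stacked vector whose $i$-th block is $\nabla \hat{f}_{\ixmap(i)}^\ast(\xi_i)$, the matrix $\mathcal{B}_\mathcal{S}$ vertically stacks $A_{\ixmap(1)}, \ldots, A_{\ixmap(b)}$ (so $\mathcal{B}_\mathcal{S} = b\, \mathcal{A}_\mathcal{S}$), and $L(\xi) := x - \alpha\, \trp{\mathcal{A}_\mathcal{S}}\xi + v$ is an affine, hence $\mathcal{C}^\infty$, map from $\mathcal{D}$ to $\R^n$.

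For the first summand, assumption \ref{A5} (resp.\ \ref{tA5}) directly yields that each mapping $\xi_i \mapsto \nabla \hat{f}_{\ixmap(i)}^\ast(\xi_i)$ is semismooth (resp.\ $\nu$-order semismooth) on $\mathcal{D}_{\ixmap(i)}$. Since the $\xi_i$ are disjoint coordinate blocks, $G$ is (resp.\ $\nu$-order) semismooth on $\mathcal{D}$ with respect to the block-diagonal surrogate $\mathrm{Diag}(H_i(\xi_i))_{i\in[b]}$ with $H_i(\xi_i) \in \partial(\nabla \hat{f}_{\ixmap(i)}^\ast)(\xi_i)$, which is exactly the first contribution appearing in $\hat{\partial}\mathcal{V}$. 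For the second summand, \ref{A3} (resp.\ \ref{tA3}) supplies (resp.\ $\nu$-order) semismoothness of $\prox{\alpha\phi}$ on $\R^n$. Applying the chain rule to the $\mathcal{C}^\infty$ inner map $L$ with constant Jacobian $-\alpha\, \trp{\mathcal{A}_\mathcal{S}}$ and to left-multiplication by the constant matrix $-\mathcal{B}_\mathcal{S}$ produces the surrogate Jacobian $-\mathcal{B}_\mathcal{S}\, U\, (-\alpha\, \trp{\mathcal{A}_\mathcal{S}}) = \alpha\, \mathcal{B}_\mathcal{S}\, U \trp{\mathcal{A}_\mathcal{S}} = \alpha b\, \mathcal{A}_\mathcal{S}\, U \trp{\mathcal{A}_\mathcal{S}}$ for any $U \in \partial \prox{\alpha\phi}(L(\xi))$, and preserves (higher-order) semismoothness. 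Combining the two contributions via the sum rule assembles the surrogate differential $\mathrm{Diag}(H_i) + \alpha b\, \mathcal{A}_\mathcal{S}\, U \trp{\mathcal{A}_\mathcal{S}}$, which coincides exactly with $\hat{\partial}\mathcal{V}(\xi)$.

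The main technical point --- where I would take the most care --- is to confirm that the set $\hat{\partial}\mathcal{V}$ as written validly instantiates these sum and chain rules. Semismoothness with respect to a surrogate differential requires the defining estimate to hold uniformly over all of its elements, and the sum rule must allow independent selections within each summand's surrogate. Since $\hat{\partial}\mathcal{V}$ is already defined through independent choices of $U$ and of the $H_i$, no additional work is needed beyond verifying local Lipschitzness of the components on $\mathcal{D}$ (which follows from \ref{A4} for each $\nabla \hat{f}_{\ixmap(i)}^\ast$ and from nonexpansiveness of $\prox{\alpha\phi}$) and openness of $\mathcal{D}$, ensuring that $\xi + s \in \mathcal{D}$ for sufficiently small $s$ so that $\hat{\partial}\mathcal{V}(\xi + s)$ is well-defined. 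Under \ref{tA3} and \ref{tA5} the same argument preserves the $\nu$-order, since the affine inner map $L$ contributes a zero remainder term and multiplication by fixed matrices only rescales hidden constants.
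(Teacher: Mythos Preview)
Your proposal is correct and follows essentially the same approach as the paper: the paper's proof simply invokes the chain rule for semismooth functions \cite[Thm.~7.5.17]{Facchinei2003} for the first claim and \cite[Prop.~3.8]{Ulbrich2011} together with \cite[Prop.~3.6]{Shapiro1990} for the $\nu$-order case. Your argument is a more explicit unpacking of these chain and sum rules, including the verification that the resulting surrogate differential coincides with $\hat{\partial}\mathcal{V}$, which the paper leaves implicit.
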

\begin{proof}
The first claim follows using the chain rule for semismooth functions \cite[Thm.\ 7.5.17]{Facchinei2003}. If we assume \ref{tA3} and \ref{tA5} instead, the claim follows from \cite[Prop.\ 3.8]{Ulbrich2011} and \cite[Prop.\ 3.6]{Shapiro1990}.
\end{proof}
In the following, we show that the function $\mathcal{V}$ can be interpreted as a gradient mapping. Thus, finding a root of $\mathcal{V}$ is equivalent to finding a stationary point.
\begin{proposition}
\label{prop:derive_U}
Let the assumptions \ref{A1} and \ref{A4} hold. Let $\alpha >0$ and $x,d \in \R^n$ be given and let $\mathcal{S}$ be a $b$-tuple of elements of $[N]$. For $\xi \in \R^{m_\mathcal{S}}$, we define
\[ \mathcal{U}(\xi) := {\sum}_{i=1}^{b} \hat{f}_{\ixmap(i)}^\ast(\xi_i) + \frac{b}{2\alpha} \|z(\xi)\|^2 - \frac{b}{\alpha} \mathrm{env}_{\alpha \phi}(z(\xi)), \quad z(\xi) := x - \alpha \mathcal A_{\mathcal S}^\top \xi + v. \]
Then, $\mathcal{U}$ is $\mu_\ast$-strongly convex on the set $\mathcal E := \prod_{i=1}^b \dom(\hat{f}_{\ixmap(i)}^\ast)$ and we have $\nabla \mathcal{U}(\xi) = \mathcal{V}(\xi)$ for all $\xi \in \mathcal{D}$ where $\mathcal{V}$ is defined in \cref{eqn:definition_V}.
\end{proposition}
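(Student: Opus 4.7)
The statement decomposes into two claims: an explicit gradient formula and a strong-convexity estimate. I would address the gradient formula first (it is just chain rule) and then reduce strong convexity to a separability argument plus convexity of the remaining term.

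\textbf{Step 1 (gradient identity).} Write $\mathcal{U}(\xi) = G(\xi) + H(\xi)$, where
\[
G(\xi) := {\sum}_{i=1}^{b} \hat{f}^\ast_{\ixmap(i)}(\xi_i),\qquad H(\xi) := \frac{b}{2\alpha}\|z(\xi)\|^2 - \frac{b}{\alpha}\,\mathrm{env}_{\alpha\phi}(z(\xi)).
\]
By \ref{A4}, each $\hat{f}^\ast_{\ixmap(i)}$ is differentiable on $\mathcal{D}_i$, so $(\nabla G(\xi))_i = \nabla \hat{f}^\ast_{\ixmap(i)}(\xi_i)$ on $\mathcal{D}$. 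For $H$, I use the chain rule together with the Moreau-envelope identity $\nabla\mathrm{env}_{\alpha\phi}(z) = z-\prox{\alpha\phi}(z)$ from \cref{eqn:moreau_gradient}. Since $z(\xi)$ is affine with Jacobian $-\alpha\mathcal{A}_{\mathcal S}^\top$, and the $i$-th block-column of $\mathcal{A}_{\mathcal S}^\top$ is $\tfrac{1}{b}A_{\ixmap(i)}^\top$, the contribution of $H$ to the $i$-th block of $\nabla\mathcal{U}(\xi)$ is
\[
-\tfrac{\alpha}{b}A_{\ixmap(i)}\Bigl[\tfrac{b}{\alpha}z(\xi) - \tfrac{b}{\alpha}\bigl(z(\xi)-\prox{\alpha\phi}(z(\xi))\bigr)\Bigr] = -A_{\ixmap(i)}\prox{\alpha\phi}(z(\xi)).
\]
Combining with $\nabla G$ gives $(\nabla\mathcal{U}(\xi))_i = \nabla\hat{f}^\ast_{\ixmap(i)}(\xi_i) - A_{\ixmap(i)}\prox{\alpha\phi}(z(\xi)) = \mathcal{V}_i(\xi)$, proving $\nabla\mathcal{U}=\mathcal{V}$ on $\mathcal{D}$. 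The only place one has to be careful is bookkeeping the $1/b$ in the definition of $\mathcal{A}_{\mathcal S}$, which is exactly what makes the factor $b/\alpha$ in $H$ the right normalization for the $A_{\ixmap(i)}$ to reappear without any scalar prefactor.

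\textbf{Step 2 (strong convexity).} The separable term $G$ is block-diagonal: by \ref{A1} and \cite[Thm.\ 18.15]{Bauschke2011}, each $\hat{f}^\ast_{\ixmap(i)}$ is $1/(L_{\ixmap(i)}+\gamma_{\ixmap(i)})$-strongly convex on its domain, hence at least $\mu_\ast$-strongly convex by the definition of $\mu_\ast$ in \cref{eqn:unif_pos_def}. A sum of strongly convex functions in disjoint variable blocks is strongly convex with the same constant, so $G$ is $\mu_\ast$-strongly convex on $\mathcal E$. It therefore suffices to show that $H$ is convex on all of $\R^{m_{\mathcal S}}$.

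\textbf{Step 3 (convexity of $H$).} Set $h(z) := \tfrac{1}{2}\|z\|^2 - \mathrm{env}_{\alpha\phi}(z)$. By \cref{eqn:moreau_gradient}, $\nabla h(z) = z-(z-\prox{\alpha\phi}(z)) = \prox{\alpha\phi}(z)$, which is monotone (in fact firmly nonexpansive, \cref{eq:nonexp}); hence $h$ is convex on $\R^n$. Because $z(\xi)$ is affine, $H(\xi) = (b/\alpha)\,h(z(\xi))$ is convex in $\xi$. Adding $G$ yields $\mu_\ast$-strong convexity of $\mathcal U$ on $\mathcal E$. No step is really an obstacle here; the only thing to watch is that $G$'s strong convexity lives only on $\mathcal E$, which is precisely the domain stated in the proposition, so the conclusion is formulated at the correct level of generality.
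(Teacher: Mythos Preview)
Your proof is correct and the overall architecture matches the paper's: compute $\nabla\mathcal{U}$ blockwise via the chain rule and \cref{eqn:moreau_gradient}, then obtain $\mu_\ast$-strong convexity from the separable conjugate terms once the remainder $H$ is shown to be convex.

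The one place where you diverge from the paper is the argument for convexity of $h(z)=\tfrac12\|z\|^2-\mathrm{env}_{\alpha\phi}(z)$. You compute $\nabla h=\prox{\alpha\phi}$ and invoke firm nonexpansiveness (hence monotonicity) of the proximity operator. The paper instead applies Moreau's identity
\[
\tfrac12\|z\|^2-\mathrm{env}_{\alpha\phi}(z)=\alpha^2\,\mathrm{env}_{\alpha^{-1}\phi^\ast}(z/\alpha)
\]
and then quotes that the Moreau envelope of a proper, closed, convex function is convex. Both routes are valid; yours is arguably more self-contained since it stays within tools already set up in the paper (\cref{eq:nonexp} and \cref{eqn:moreau_gradient}) and avoids the extra identity, while the paper's version has the advantage of exhibiting $h$ explicitly as a Moreau envelope, which some readers may find structurally illuminating.
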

\begin{proof}
For every $\xi \in \mathcal{D}$ and $i\in [b]$, we have $\frac{\partial z}{\partial\xi_i}(\xi) = -\frac{\alpha}{b}\trp{A}_{\ixmap(i)}$ and
\begin{align*}
\nabla_{\xi_i} \mathcal{U}(\xi) &= 
\nabla \hat{f}_{\ixmap(i)}^\ast(\xi_i) 
+ \frac{b}{\alpha} \trp{\frac{\partial z}{\partial\xi_i}(\xi)}(z(\xi) - (z(\xi) - \prox{\alpha \phi}(z(\xi)))) \\
&= \nabla \hat{f}_{\ixmap(i)}^\ast(\xi_i) - A_{\ixmap(i)}
\prox{\alpha \phi}(z(\xi)) = \mathcal{V}_i(\xi),
\end{align*}
where we used \cref{eqn:moreau_gradient}. For the first statement, note that \ref{A1} implies strong convexity of $\hat{f}_i^\ast$ on $\dom(\hat{f}_{i}^\ast)$ for $i=1,\dots,N$. Applying Moreau's identity, \cite[Thm.\ 6.67]{Beck2017},
\begin{equation}\label{enveq}
\onehalf \|z\|^2 - \mathrm{env}_{\alpha \phi}(z) = \alpha^2\mathrm{env}_{\alpha^{-1}\phi^\ast}(z/\alpha),
\end{equation}
we can use the fact that the Moreau envelope of a proper,
closed, and convex function is convex \cite[Thm.\ 6.55]{Beck2017}. Hence, the mapping $\xi \mapsto \frac{b}{2\alpha} \|z(\xi)\|^2 - \frac{b}{\alpha} \mathrm{env}_{\alpha \phi}(z(\xi))$
is convex as $\xi \mapsto z(\xi)$ is affine. Altogether, $\mathcal{U}$ is $\mu_\ast$-strongly convex on $\mathcal{E}$ (cf.\ \cref{eqn:unif_pos_def}).
\end{proof}
In \cref{alg:semismooth_newton}, we formulate a globalized semismooth Newton method for solving the nonsmooth system \cref{eqn:newton_equation}. Specifically, the result in \cref{prop:derive_U} enables us to measure descent properties of a semismooth Newton step using $\mathcal U$ and to apply Armijo line search-based globalization techniques. Based on the results on  SC${}^{1}$ minimization (cf. \cite{Facchinei1995,Zhao2010,Ulbrich2011}), we obtain the following convergence result.

\begin{algorithm}[t]
\caption{Semismooth Newton Method for Solving \cref{eqn:newton_equation}}
\label{alg:semismooth_newton}
\begin{algorithmic}
\REQUIRE $x, v \in \R^n$, $\alpha > 0$, a $b$-tuple $\mathcal{S}$ of elements of $[N]$, and a tolerance $\epsilon_{\mathrm{sub}}$.\\
Choose an initial point $\xi^0$ such that $\xi^0_i \in \mathcal{D}_i$ for all $i=1,\dots,b$. Choose parameters $\hat{\gamma} \in (0, \onehalf)$, $\eta \in (0, 1)$, $\rho \in (0, 1)$, $\tau \in (0,1]$, and $\tau_1, \tau_2 \in (0,1)$. Set $j=0$.
\WHILE{$\|\nabla\mathcal{U}(\xi^{j})\| > \epsilon_{\mathrm{sub}}$}
\STATE \textbf{(Newton direction)} Choose $\mathcal{W} \in \hat{\partial}
\mathcal{V}(\xi^j)$, set $\eta_j := \tau_1 \min\{\tau_2,
\|\mathcal{V}(\xi^j)\|\}$, and approximately solve the linear system
\begin{displaymath}
(\mathcal{W} + \eta_j I)d^j = -\mathcal{V}(\xi^j) 
\end{displaymath}
 via the conjugate gradient method such that $\|r^j\| \leq \min\{\eta, \|\mathcal{V}(\xi^j)\|^{1+\tau}\}$ with $r^j := (\mathcal{W} + \eta_j I)d^j + \mathcal{V}(\xi^j)$.\\
\STATE \textbf{(Armijo line search)} Find the smallest non-negative integer $\ell_j$ such that
\begin{displaymath}
\mathcal{U}(\xi^j + \rho^{\ell_j} d^j) \leq \mathcal{U}(\xi^j) + \hat{\gamma}\rho^{\ell_j} \langle \nabla \mathcal{U}(\xi^j), d^j \rangle
\end{displaymath}
and $\xi^j_i+\rho^{\ell_j}d_i^j \in \mathcal D_i$ for all $i=1,\dots,b$. Set $\beta_j := \rho^{\ell_j}$.\\
\STATE \textbf{(Update) }Compute the new iterate $\xi^{j+1} = \xi^j + \beta_j d^j$ and set $j \leftarrow j+1$.
\ENDWHILE
\RETURN $\xi^{j}$
\end{algorithmic}
\end{algorithm}

\begin{theorem}
\label{thm:ssn_convergence}
Let the assumptions \ref{A1}--\ref{A5} be satisfied and let the sequence $\{\xi^j\}$ be generated by \cref{alg:semismooth_newton}. Then, $\{\xi^j\}$ converges q-superlinearly to the unique solution $\hat\xi \in \mathcal D$ of \cref{eqn:newton_equation}, i.e.,
$\|\xi^{j+1} - \hat\xi\| = o(\|\xi^{j} - \hat\xi\|)$
as $j \to \infty$.
Moreover, under \ref{tA3} and \ref{tA5}, we obtain
\begin{displaymath}
\|\xi^{j+1} - \hat\xi\| = \mathcal{O}(\|\xi^{j} - \hat\xi\|^{1+\min\{\tau, \nu\}}) \quad \text{for all $j$ sufficiently large}.
\end{displaymath}
\end{theorem}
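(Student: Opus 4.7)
\textbf{Proof plan for \cref{thm:ssn_convergence}.} The plan is to view \cref{alg:semismooth_newton} as a globalized semismooth Newton method for the SC${}^1$ optimization problem $\min_{\xi \in \mathcal{D}} \mathcal{U}(\xi)$, exploiting that \cref{prop:derive_U} identifies $\mathcal{V} = \nabla \mathcal{U}$. First I would establish that $\mathcal{U}$ attains a unique minimizer $\hat\xi \in \mathcal{D}$ which solves $\mathcal{V}(\hat\xi)=0$: by \ref{A1} and Proposition \ref{prop:strongly_convex_lipschitz}, $\hat f^*_i$ has full domain $\R^{m_{\kappa(i)}}$ only if $\hat f_i$ is cofinite; in general we combine $\mu_*$-strong convexity on $\mathcal E$ (from \cref{prop:derive_U}) with the essential differentiability \ref{A4} to argue that any minimizer lies in $\mathcal D$ and is unique.

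Next I would address global convergence. The key structural observation is that every $\mathcal{W}(\xi) \in \hat\partial \mathcal{V}(\xi)$ is uniformly positive definite on $\mathcal D$ with smallest eigenvalue $\geq \mu_*$: this follows by combining \cref{prop:prox_phi_semidef} (which makes $\alpha b\,\mathcal A_\mathcal S U \mathcal A_\mathcal S^\top$ positive semidefinite) with \cref{eqn:unif_pos_def} (which makes the block diagonal term $\mathrm{Diag}(H_i) \succeq \mu_* I$). Hence $\mathcal{W}+\eta_j I$ is invertible and the residual condition $\|r^j\| \leq \eta\|\mathcal V(\xi^j)\|$ forces $\langle \nabla\mathcal U(\xi^j),d^j\rangle \leq -(1-\eta)\|\mathcal V(\xi^j)\|\cdot\|d^j\|/\kappa$ for a uniform conditioning constant $\kappa$, so $d^j$ is a uniform descent direction. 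Armijo backtracking on $\mathcal U$ is then well-defined (the $\mathcal D_i$-feasibility of $\xi^j+\rho^{\ell_j}d^j$ is automatic for large $\ell_j$ since $\mathcal D_i$ is open), and the standard SC${}^1$ globalization argument of \cite{Facchinei1995,Ulbrich2011} yields $\nabla\mathcal U(\xi^j)\to 0$; strong convexity of $\mathcal U$ then upgrades this to $\xi^j \to \hat\xi$.

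The crucial transition step is to show that $\beta_j=1$ is eventually accepted. I would apply the semismoothness of $\mathcal V$ together with the uniform bound on $(\mathcal W+\eta_j I)^{-1}$ to obtain $\|\xi^j+d^j-\hat\xi\|=o(\|\xi^j-\hat\xi\|)$ along the inexact Newton step, and then Taylor-expand $\mathcal U$ around $\xi^j$ using that $\nabla \mathcal U$ is Lipschitz locally; the semismooth calculus gives the Armijo test with $\hat\gamma<1/2$ and $\beta_j=1$ for all $j$ sufficiently large, via the classical argument of Facchinei \cite{Facchinei1995} adapted to inexact semismooth Newton systems. Once pure Newton steps are taken, the standard estimate
\begin{equation*}
\|\xi^{j+1}-\hat\xi\| \leq \|(\mathcal W+\eta_j I)^{-1}\|\bigl[\|\mathcal V(\xi^j)-\mathcal V(\hat\xi)-\mathcal W(\xi^j-\hat\xi)\| + \eta_j\|\xi^j-\hat\xi\| + \|r^j\|\bigr]
\end{equation*}
combined with $\|(\mathcal W+\eta_j I)^{-1}\|\leq \mu_*^{-1}$, the semismoothness remainder $o(\|\xi^j-\hat\xi\|)$, and $\eta_j,\|r^j\|\to 0$ (as $\mathcal V(\xi^j)\to 0$) yields q-superlinear convergence.

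For the sharpened rate under \ref{tA3} and \ref{tA5}, the $\nu$-order semismoothness of $\mathcal V$ improves the first remainder to $\mathcal O(\|\xi^j-\hat\xi\|^{1+\nu})$; Lipschitz continuity of $\mathcal V$ near $\hat\xi$ gives $\|\mathcal V(\xi^j)\|=\mathcal O(\|\xi^j-\hat\xi\|)$, hence $\eta_j\|\xi^j-\hat\xi\|=\mathcal O(\|\xi^j-\hat\xi\|^2)$ and $\|r^j\|=\mathcal O(\|\xi^j-\hat\xi\|^{1+\tau})$. The dominating term is $\mathcal O(\|\xi^j-\hat\xi\|^{1+\min\{\tau,\nu\}})$, which is exactly the claimed rate. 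The main technical obstacle I anticipate is the acceptance of unit step sizes: unlike classical smooth Newton this requires a careful SC${}^1$ Armijo analysis that accommodates both the Tikhonov regularization $\eta_j I$ and the inexact residual $r^j$; all other steps are routine adaptations of \cite{Facchinei1995,Zhao2010,Ulbrich2011}.
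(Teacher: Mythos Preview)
Your proposal is correct and follows essentially the same approach as the paper: both exploit strong convexity and essential differentiability of $\mathcal U$ for existence and uniqueness of $\hat\xi\in\mathcal D$, use \cref{prop:prox_phi_semidef} together with \cref{eqn:unif_pos_def} for uniform positive definiteness of every $\mathcal W(\xi)\in\hat\partial\mathcal V(\xi)$, cite \cite{Zhao2010} for global convergence and the inexact semismooth Newton estimate $\|\xi^j+d^j-\hat\xi\|=o(\|\xi^j-\hat\xi\|)$ (resp.\ $\mathcal O(\|\xi^j-\hat\xi\|^{1+\min\{\tau,\nu\}})$), and invoke \cite[Thm.\ 3.2]{Facchinei1995} for eventual acceptance of $\beta_j=1$. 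One minor slip: the residual bound in \cref{alg:semismooth_newton} is $\|r^j\|\le\min\{\eta,\|\mathcal V(\xi^j)\|^{1+\tau}\}$, not $\|r^j\|\le\eta\|\mathcal V(\xi^j)\|$, so your uniform-descent inequality does not follow as written; the paper instead derives the gradient-related bound $-\langle\nabla\mathcal U(\xi^j),d^j\rangle/\|d^j\|^2\ge \lambda_{\min}(\tilde{\mathcal W}_j)^2/(4\lambda_{\max}(\tilde{\mathcal W}_j))\ge\tilde\rho>0$ via \cite[Prop.\ 3.3]{Zhao2010} and the estimate $\|d^j\|\le 2\lambda_{\min}(\tilde{\mathcal W}_j)^{-1}\|\mathcal V(\xi^j)\|$.
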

\begin{proof}
By construction, we have $\{\xi^j\} \subset \mathcal{D}$ and the set $\mathcal{D}$ is open. \cref{prop:derive_U} and \ref{A4} imply that $\mathcal U$ is strongly convex (on $\mathcal E$) and essentially differentiable. Hence, $\mathcal{U}$ has a unique minimizer $\hat\xi\in\mathcal{D}$ which is also the unique solution of \cref{eqn:newton_equation}. For every $\xi \in \mathcal{D}$, the matrices $\mathcal W(\xi) \in \hat{\partial} \mathcal{V}(\xi)$ are positive definite by \cref{eqn:unif_pos_def} and \cref{prop:prox_phi_semidef}. Using standard arguments (see \cite[Thm.\ 3.4]{Zhao2010} and \cite[Thm.\ 3.6]{Li2018}), it can be shown that the sequence $\{\xi^j\}$ generated by \cref{alg:semismooth_newton} converges to $\hat\xi$. Under \ref{A1}--\ref{A5}, we conclude from equation (67) in the proof of \cite[Thm.\ 3.5]{Zhao2010} that
$\|\xi^j + d^j - \hat\xi\| \leq o(\|\xi^{j} - \hat\xi\|)$
holds for all $j$ sufficiently large.
If assumptions \ref{tA3} and \ref{tA5} are satisfied instead of \ref{A3} and \ref{A5}, then we have $\|\xi^j + d^j - \hat\xi\| \leq \mathcal{O}(\|\xi^{j} - \hat\xi\|^{1+\min\{\tau, \nu\}})$. Finally, let us show that in a neighborhood of the limit point the unit step size is accepted by the Armijo line search. Setting $\tilde{\mathcal{W}}_j:= \mathcal{W} + \eta_j I$ and using $\mathcal V(\xi^j) \to 0$, we can infer
\begin{align*}
\|d^j\| = \|\tilde{\mathcal{W}}_j^{-1} (r^j - \mathcal{V}(\xi^j))\| \leq  \|\tilde{\mathcal{W}}_j^{-1}\|(\|r^j\| + \|\mathcal{V}(\xi^j)\|) \leq 2 \lambda_{\min}(\tilde{\mathcal{W}_j})^{-1} \|\mathcal{V}(\xi^j)\|,
\end{align*}
for all $j$ sufficiently large. Thus, we have
\begin{align*}
- \frac{\langle \nabla \mathcal{U}(\xi^j), d^j \rangle}{\|d^j\|^2} \geq \frac{\lambda_{\min}(\tilde{\mathcal{W}_j})^2}{4} \frac{\langle -\nabla \mathcal{U}(\xi^j), d^j \rangle}{\|\nabla \mathcal{U}(\xi^j)\|^2} \geq \frac{\lambda_{\min}(\tilde{\mathcal{W}_j})^2}{4\lambda_{\max}(\tilde{\mathcal{W}_j})},
\end{align*}
where the second inequality comes from \cite[Prop.\ 3.3]{Zhao2010}. 
Due to strong convexity, there exists $\tilde \rho > 0$ such that $\frac{\lambda_{\min}(\tilde{\mathcal{W}_j})^2}{4\lambda_{\max}(\tilde{\mathcal{W}_j})}\geq \tilde \rho > 0$ for all $j$. Thanks to \cite[Thm.\ 3.3]{Facchinei1995}, $\beta_j = 1$ then fulfills the Armijo condition for $j$ sufficiently large which concludes the proof.
\end{proof}
\section{Controlling the Inexactness of the Update}
\label{sec:inexactness}
In this section, we will discuss the stopping criterion of \cref{alg:semismooth_newton}. Let $x \in \R^n$, $\alpha >0$, and a tuple $\mathcal{S}$ of elements of $[N]$ be given. By \cref{prop:derive_U}, $\mathcal{U}$ is $\mu_\ast$-strongly convex on $\mathcal D \subset \mathcal E$. Thus, the gradient $\nabla \mathcal U$ is a $\mu_\ast$-strongly monotone operator on $\mathcal D$. 
Let $\hat{\xi} := \argmin_{\xi} \mathcal{U}(\xi) \in \mathcal{D}$ again denote the unique minimizer of $\mathcal U$. Then, we have 
\begin{align}
\label{eqn:inexactness_xi}
\|\xi - \hat{\xi} \| \leq {\mu_\ast}^{-1} \|\nabla\mathcal{U}(\xi)\| \quad \forall ~ \xi \in \mathcal D,
\end{align}
Hence, the stopping criterion of \cref{alg:semismooth_newton} -- $\|\nabla\mathcal{U}(\xi^{j})\| \leq \epsilon_{\mathrm{sub}}$ -- allows to control the error $\|\xi^{j} - \hat{\xi} \|$. As we solve each subproblem inexactly, the updates $(x^{k+1}, \xi^{k+1})$ in \cref{alg:snspp} are not an exact solution to \cref{eqn:update_x_stochastic}. It is desirable to control the error $\|x^{k+1} - \hat{x}^{k+1}\|$ 
in each iteration, where $\hat{x}^{k+1}$ is the exact solution of \cref{eqn:update_x_stochastic}.
This is addressed in the following result.
\begin{proposition}
\label{prop:inexactness_bound}
Let us define $\bar{A}:= \max_{i \in [N]} \|A_i\|$ and let $x,v\in\R^n$, $\alpha>0$, and a $b$-tuple $\mathcal{S}$ of elements of $[N]$ be given. Suppose that $\mathcal{U}$, defined in \cref{prop:derive_U}, is $\mu_\ast$-strongly convex on $\mathcal{E}$ and let $\hat{\xi} = \argmin_{\xi} \mathcal{U}(\xi) \in \mathcal D$ be the unique minimizer of $\mathcal U$. Suppose that \cref{alg:semismooth_newton} -- run with tolerance $\epsilon_{\mathrm{sub}}$ -- returns $\xi$. Then, setting
\begin{align*}
\hat{x}^+ := \prox{\alpha \phi}(x - \alpha \trp{\mathcal{A}_{\mathcal{S}}} \hat{\xi}+v) \quad \text{and} \quad
x^+:= \prox{\alpha \phi}(x - \alpha \trp{\mathcal{A}_{\mathcal{S}}} \xi  +v ),
\end{align*}
it holds that
$\|\xi-\hat{\xi}\|\leq \frac{\epsilon_{\mathrm{sub}}}{\mu_\ast}$ and  $\|x^+ - \hat{x}^+\| \leq \alpha \| \trp{A_{\mathcal{S}}} (\xi - \hat{\xi})\|\leq \frac{\alpha \bar{A}}{\mu_\ast\sqrt{b}} \epsilon_{\mathrm{sub}}.$
\end{proposition}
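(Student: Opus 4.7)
The plan is to split the statement into its three inequalities and handle each with one standard tool, since Proposition 4.1 already gives us the first bound for free.

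First I would establish $\|\xi - \hat\xi\| \le \epsilon_{\mathrm{sub}}/\mu_\ast$. By \cref{prop:derive_U}, $\mathcal U$ is $\mu_\ast$-strongly convex on $\mathcal E$, hence $\nabla\mathcal U$ is $\mu_\ast$-strongly monotone on $\mathcal D$. Using $\nabla\mathcal U(\hat\xi)=0$ together with Cauchy--Schwarz yields
\[
\mu_\ast\|\xi-\hat\xi\|^2 \le \langle \nabla\mathcal U(\xi)-\nabla\mathcal U(\hat\xi),\xi-\hat\xi\rangle \le \|\nabla\mathcal U(\xi)\|\,\|\xi-\hat\xi\|,
\]
which is precisely \cref{eqn:inexactness_xi}; combined with the stopping test $\|\nabla\mathcal U(\xi)\|\le\epsilon_{\mathrm{sub}}$ of \cref{alg:semismooth_newton} this gives the first bound.

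Next I would pass from $\xi$ to $x^+$. Since $\phi$ is convex, $\prox{\alpha\phi}$ is nonexpansive by \cref{eq:nonexp}, so
\[
\|x^+-\hat x^+\| = \|\prox{\alpha\phi}(x-\alpha\trp{\mathcal A_{\mathcal S}}\xi+v)-\prox{\alpha\phi}(x-\alpha\trp{\mathcal A_{\mathcal S}}\hat\xi+v)\| \le \alpha\|\trp{\mathcal A_{\mathcal S}}(\xi-\hat\xi)\|,
\]
which establishes the middle inequality.

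Finally I would bound the operator norm of $\mathcal A_{\mathcal S}$. From the definition $\mathcal A_{\mathcal S} = \tfrac{1}{b}\trp{(\trp{A}_{\ixmap(1)},\ldots,\trp{A}_{\ixmap(b)})}$, for any $u\in\Rn$ we have
\[
\|\mathcal A_{\mathcal S} u\|^2 = \oneover{b^2}{\sum}_{i=1}^{b}\|A_{\ixmap(i)}u\|^2 \le \oneover{b^2}\cdot b\bar A\,\|u\|^2 = \frac{\bar A}{b}\|u\|^2,
\]
so $\|\trp{\mathcal A_{\mathcal S}}\| = \|\mathcal A_{\mathcal S}\| \le \bar A^{1/2}/\sqrt b$. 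Combining with the first bound on $\|\xi-\hat\xi\|$ produces the claimed $\tfrac{\alpha\bar A^{1/2}}{\mu_\ast\sqrt b}\epsilon_{\mathrm{sub}}$. There is no real obstacle: the only point requiring mild care is keeping track of the factor $1/b$ built into the definition of $\mathcal A_{\mathcal S}$, which is what produces the $\sqrt b$ in the denominator rather than a $\sqrt b$ in the numerator that one might naively expect from stacking $b$ blocks.
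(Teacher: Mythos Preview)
Your proof is correct and follows essentially the same approach as the paper: invoke \cref{eqn:inexactness_xi} for the first bound, nonexpansiveness of $\prox{\alpha\phi}$ for the middle inequality, and then control $\|\trp{\mathcal A_{\mathcal S}}(\xi-\hat\xi)\|$. The only cosmetic difference is in the last step: the paper uses the triangle inequality $\|\trp{\mathcal A_{\mathcal S}}(\xi-\hat\xi)\|\le\tfrac{\bar A^{1/2}}{b}\sum_{i=1}^b\|\xi_i-\hat\xi_i\|$ followed by Cauchy--Schwarz on the sum, whereas you bound the operator norm $\|\mathcal A_{\mathcal S}\|\le\bar A^{1/2}/\sqrt b$ directly; both yield the same constant.
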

\begin{proof}
The bound on $\|\xi-\hat{\xi}\|$ follows directly from  \cref{eqn:inexactness_xi}. Utilizing the nonexpansiveness of the proximity operator, we can estimate
\begin{align*}
\|x^+ - \hat{x}^+\| &= \|\prox{\alpha \phi}\big(x - \alpha \trp{\mathcal{A}_{\mathcal{S}}} \xi  +v \big)  - \prox{\alpha \phi}\big(x - 
\alpha \trp{\mathcal{A}_{\mathcal{S}}} \hat{\xi}+v \big)\|  \\
& \leq \alpha \|  \trp{A_{\mathcal{S}}} (\xi - \hat{\xi})\| \leq \frac{\alpha \bar{A}}{\sqrt{b}} \|\xi -\hat{\xi} \| \leq \frac{\alpha \bar{A}}{\mu_\ast\sqrt{b}} \epsilon_{\mathrm{sub}}.  
\end{align*}
\end{proof}
\section{Convergence Analysis}
\label{sec:convergence}
We first introduce several important constants. Let \ref{A1} of \cref{asum:main} be satisfied. We denote the Lipschitz constant of $\nabla f$ by $L$ and we set ${\mL} := \max_i L_i \|A_i\|^2$.
For $b \in [N]$, let us define $\bar{L}_b := \max_{\mathcal{S}, |\mathcal{S}|=b} L_\mathcal{S}$ where $L_\mathcal{S}$ is the Lipschitz constant of $\nabla f_\mathcal{S}$, and $M_{\mathcal{S}} := \oneover{b}\sum_{i\in \mathcal{S}} \gamma_i \trp{A}_iA_i$. Then, for any tuple $\mathcal{S}$ it holds that
\begin{align}
\label{eqn:definition_of_constants}
L \leq \frac1N {\sum}_{i=1}^N L_i\|A_i\|^2\leq {\mL},\qquad
\bar{L}_b \leq {\mL},\qquad
\|M_{\mathcal{S}}\| \leq \max_{i = 1, \dots, N} \gamma_i \cdot \bar{A}^2 =: \bar M.
\end{align}
We now formulate the main convergence results for \cref{alg:snspp}. For simplicity, we assume that $\mathcal{S}_k$ is drawn uniformly from $[N]$ with replacement for all $s$ and $k$.\footnote{Without replacement, only some of the constants change, see \cref{cor:c1} for details.} 
\subsection{Weakly Convex Case}
\begin{theorem}
\label{thm:general_case}
Let the iterates $\{x^{s,k}\}$ be generated by \cref{alg:snspp} with
$S=\infty$, constant batch sizes $b_k^s=b$, and using Option I. Let
\cref{asum:main} and \cref{asum:conjugate} be satisfied and assume
\begin{equation} \sum_{s=0}^\infty\sum_{k=0}^{m-1} \alpha_k^s = \infty,  \quad \sum_{s=0}^\infty\sum_{k=0}^{m-1}
\alpha_k^s (\epsilon_k^s)^2 < \infty, \quad \alpha_k^s \leq \min\{1,\hat \alpha\} \quad \forall~s, k, \end{equation}
where $\hat \alpha := \bar\eta \max\{2L+\bar M, [1+m/\sqrt{2b}]\mL + \max\{L,\bar M\}\}^{-1}$ and $\bar\eta \in (0,1)$.
Then, for all $0\le k<m$,
$\{\E\|F_{\mathrm{nat}}(x^{s,k})\|\}_{s\in\N_0}$ converges to zero
and $\{F_{\mathrm{nat}}(x^{s,k})\}_{s\in \N_0}$ converges to zero
almost surely as $s\to\infty$.
\end{theorem}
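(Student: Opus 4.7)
The plan is to establish convergence through a Lyapunov-style argument based on the Moreau envelope of $\psi$, combined with the Robbins--Siegmund supermartingale convergence lemma. Since $\psi$ is only weakly convex, the natural surrogate objective is $\env{\psi}^{\lambda}$ for a suitably small $\lambda > 0$ (within the weak-convexity threshold so that this envelope is well-defined), whose gradient is known to be proportional to a proximal residual that is equivalent, up to constants, to $F_{\mathrm{nat}}$. I would begin by recording this equivalence and noting that the stopping accuracy $\epsilon_k^s$ of \cref{alg:semismooth_newton} translates, via \cref{prop:inexactness_bound}, into an $O(\alpha_k^s \epsilon_k^s)$ error between the inexact iterate $x^{s,k+1}$ and the ``ideal'' iterate $\hat x^{s,k+1}$ defined by solving \cref{eqn:update_x_stochastic} exactly.

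Next I would derive a one-step descent inequality for $\hat x^{s,k+1}$. Using the characterization $\hat x^{k+1} = \prox{\alpha_k\psi_{\mathcal S_k}}^{I+\alpha_k M_{\mathcal S_k}}(x^k)$ together with the fact that $\psi_{\mathcal S_k}(\cdot) + \frac{1}{2}\|\cdot - x^k\|_{M_{\mathcal S_k}}^2$ is convex under \ref{A1}, the standard three-point inequality for proximal operators in weighted norms yields, for every comparator $y$,
\begin{displaymath}
\psi_{\mathcal S_k}(\hat x^{k+1}) + \tfrac{1}{2\alpha_k}\|\hat x^{k+1} - x^k\|^2 \leq \psi_{\mathcal S_k}(y) + \tfrac{1}{2\alpha_k}\|y - x^k\|^2 + \tfrac{1}{2}\|y - x^k\|_{M_{\mathcal S_k}}^2 - \tfrac{1}{2\alpha_k}\|y - \hat x^{k+1}\|^2.
\end{displaymath}
Substituting the corrected surrogate $\psi_{\mathcal S_k}(x) + \iprod{v^k}{x}$ that actually arises from the SVRG-corrected implicit step, and picking $y$ to be the Moreau-envelope proximal point $\bar x^k := \argmin_y\{\psi(y) + \frac{1}{2\lambda}\|y - x^k\|^2\}$, produces a per-step bound involving $\env{\psi}^\lambda(x^k)$, the SVRG bias $\nabla f(x^k) - \nabla f_{\mathcal S_k}(x^k) + v^k$, and the implicit-step cross term involving $M_{\mathcal S_k}$.

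Taking conditional expectation $\E[\,\cdot\mid\mathcal F_k]$ and exploiting that $\mathcal S_k$ is an unbiased, uniform sampler gives $\E[\nabla f_{\mathcal S_k}(y) + v^k] = \nabla f(y) + \nabla f(\tilde x) - \nabla f(\tilde x) = \nabla f(y)$ for deterministic $y$, while the standard SVRG variance estimate combined with $L_i$-smoothness yields a bound of the form $\E\|\nabla f_{\mathcal S_k}(\bar x^k) - \nabla f_{\mathcal S_k}(\tilde x) - (\nabla f(\bar x^k) - \nabla f(\tilde x))\|^2 \leq \mL^2 b^{-1}\|\bar x^k - \tilde x\|^2$. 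Combined with $\|M_{\mathcal S_k}\| \leq \bar M$ from \cref{eqn:definition_of_constants} and the Proposition~5.1 inexactness bound, this produces an epoch-level recursion
\begin{displaymath}
\E[\env{\psi}^\lambda(x^{s,k+1})\mid\mathcal F_k] \leq \env{\psi}^\lambda(x^{s,k}) - c_1\alpha_k \E\|F_{\mathrm{nat}}(x^{s,k})\|^2 + c_2\alpha_k \E\|x^{s,k} - \tilde x^s\|^2 + c_3 \alpha_k (\epsilon_k^s)^2,
\end{displaymath}
with positive constants $c_1,c_2,c_3$ depending only on $L$, $\mL$, $\bar M$, $\bar A$, $\mu_\ast$, $\lambda$. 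Summing over an epoch $k=0,\dots,m-1$ and using the SVRG coupling $\|x^{s,k} - \tilde x^s\|^2 \leq k \sum_{j<k} \|x^{s,j+1} - x^{s,j}\|^2$ together with the step-size bound $\alpha_k^s \leq \hat\alpha$, the aggregated $c_2$-term is absorbed into a fraction of the $c_1$-term; this is precisely the role of the factor $[1 + m/\sqrt{2b}]\mL$ in $\hat\alpha$.

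Finally, I would apply the Robbins--Siegmund lemma to $V_s := \env{\psi}^\lambda(\tilde x^s) - \psi^\star$ (nonnegative by \ref{A2}), with nonnegative increment $\sum_k \alpha_k^s \E\|F_{\mathrm{nat}}(x^{s,k})\|^2$ and summable error $\sum_k \alpha_k^s(\epsilon_k^s)^2$. This yields both a.s. convergence of $V_s$ and $\sum_{s,k} \alpha_k^s \|F_{\mathrm{nat}}(x^{s,k})\|^2 < \infty$ almost surely. Combined with $\sum_{s,k} \alpha_k^s = \infty$ and the Lipschitz continuity of $F_{\mathrm{nat}}$ (hence of $\|F_{\mathrm{nat}}(x^{s,k+1})\|^2 - \|F_{\mathrm{nat}}(x^{s,k})\|^2$ in terms of $\alpha_k\|F_{\mathrm{nat}}(x^{s,k})\|$ plus noise), a standard $\liminf$/$\limsup$ argument for each fixed $k\in\{0,\dots,m-1\}$ upgrades this to $F_{\mathrm{nat}}(x^{s,k}) \to 0$ almost surely, and the $L^1$-convergence $\E\|F_{\mathrm{nat}}(x^{s,k})\| \to 0$ follows by taking expectations in the same bound.

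The hard part will be the second step: the implicit cross-term $\iprod{M_{\mathcal S_k}(\hat x^{k+1} - x^k)}{\hat x^{k+1} - \bar x^k}$ and the SVRG-biased gradient together obstruct any direct appeal to an explicit proximal-gradient descent lemma, so the algebra of choosing the comparator $y$ and untangling the cross terms without the global convexity of $f$ requires care. Tracking constants precisely enough to justify the specific expression for $\hat\alpha$ and to ensure that the $\|x^{s,k} - \tilde x^s\|^2$ variance terms can actually be absorbed inside the negative $c_1$-term across the entire epoch is the most delicate accounting in the argument.
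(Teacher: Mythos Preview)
Your Moreau-envelope route is a legitimate strategy for weakly convex problems, but it is \emph{not} what the paper does, and your sketch contains a real structural gap at the variance-absorption step.

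\textbf{What the paper actually does.} The paper never introduces $\env{\psi}^{\lambda}$ as a Lyapunov function; it works with $\psi$ itself. The argument uses the $L$-smoothness descent inequality for $f$ together with \emph{two} applications of the three-point prox inequality for $\phi$ (not for the full stochastic model): once with comparator $\bar x^{k+1}:=\prox{\alpha_k\psi}^{I+\alpha_k M_N}(x^k)$ --- the deterministic PPA step at the \emph{current} step size, not a fixed-$\lambda$ Moreau point --- and once with comparator $x^k$. The resulting inequality is in terms of \emph{three} squared distances, $\|\bar x^{k+1}-x^k\|^2$, $\|x^{k+1}-x^k\|^2$, and $\|x^{k+1}-\bar x^{k+1}\|^2$, each with its own coefficient. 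After a careful decomposition of $\nabla f(x^k)-w^k$ into an inexactness piece, a Lipschitz-of-$\nabla f_{\mathcal S_k}$ piece, an implicit-step piece, and the SVRG variance $\nabla f(x^k)-\tilde v^k$, the five Young-inequality parameters $\sigma_k^1,\dots,\sigma_k^5$ are chosen so that the coefficient in front of $\|x^{k+1}-x^k\|^2$ remains negative \emph{after} absorbing the epoch-summed variance via \cref{eq:vr-sumsum}, while the coefficient on $\|\bar x^{k+1}-x^k\|^2$ is separately negative and yields the $\|F_{\mathrm{nat}}\|$ bound through $\|\bar x^{k+1}-x^k\|\ge(1-\bar\eta)\alpha_k\|F_{\mathrm{nat}}(x^k)\|$.

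\textbf{The gap in your plan.} Your one-step recursion has a single negative term $-c_1\alpha_k\|F_{\mathrm{nat}}(x^{s,k})\|^2$ and a positive variance term $c_2\alpha_k\|x^{s,k}-\tilde x^s\|^2$. You propose to telescope the latter via $\|x^{s,k}-\tilde x^s\|^2\le k\sum_{j<k}\|x^{s,j+1}-x^{s,j}\|^2$ and then ``absorb it into a fraction of the $c_1$-term.'' But $\sum_j\|x^{s,j+1}-x^{s,j}\|^2$ is \emph{not} controlled by $\sum_k\alpha_k\|F_{\mathrm{nat}}(x^{s,k})\|^2$: the increment $x^{s,j+1}-x^{s,j}$ is a stochastic, inexact proximal step and has no a priori relation to the deterministic residual $F_{\mathrm{nat}}(x^{s,j})$. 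The paper closes this loop precisely by producing a \emph{second} negative term $-c\|x^{k+1}-x^k\|^2$ in the recursion (from the $\phi$-prox inequality with comparator $\bar x^{k+1}$ combined with the descent lemma for $f$), and it is this term --- not the $F_{\mathrm{nat}}$ term --- that eats the SVRG variance. Your Moreau-envelope recursion, as written, has no such term; a Davis--Drusvyatskiy-style argument can in principle generate one (it appears as $-\tfrac{1}{2\lambda}\|\hat x^{k+1}-x^k\|^2$ inside the $\|\hat x^{k+1}-\bar x^k\|^2$ bound), but you have already spent it to produce the $F_{\mathrm{nat}}$ descent, so it is no longer available. Without repairing this, the epoch-level inequality does not close and the constants in $\hat\alpha$ cannot be recovered.
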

Similar to \cite[Thm.\ 1]{J.Reddi2016}, we obtain the following rate of convergence.
\begin{corollary}
\label{cor:general_case_rate}
Let the iterates $\{x^{s,k}\}$ be generated by \cref{alg:snspp} with
$S \in \N$, constant step sizes $\alpha_k^s=\alpha$,
constant batch sizes $b_k^s=b$, and using Option I.
Let \cref{asum:main} and \cref{asum:conjugate} be satisfied.
Let $\bar\eta \in (0,1)$ be given and assume $\alpha \leq \hat{\alpha}$,
where $\hat{\alpha}$ is defined as in \cref{thm:general_case}.
Then, it holds that
\begin{align*}
\E\|F^\alpha_{\mathrm{nat}}(x_\pi)\|^2
\le \frac{2\alpha[\psi(\tilde x^0)-\psi^\star+ \alpha \cdot \mathcal O(\sum_{s=0}^S\sum_{k=0}^{m-1} (\epsilon_k^s)^2)]} {(1-\bar{\eta})^3 \cdot m(S+1)}.
\end{align*}
\end{corollary}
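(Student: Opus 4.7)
The plan is to establish a one-step descent inequality for a suitable Lyapunov function, telescope it over $s$ and $k$, and use the uniform distribution of $x_\pi$ to conclude. This mirrors the nonconvex \texttt{SVRG} argument of \cite{J.Reddi2016}, with two adaptations needed for \texttt{SNSPP}: the implicit nature of the SPP update, and the inexactness $\|x^{s,k+1}-\hat{x}^{s,k+1}\| \le \tfrac{\alpha\bar{A}^{1/2}}{\mu_\ast\sqrt{b}}\epsilon_k^s$ quantified in \cref{prop:inexactness_bound}. Concretely, I would work with the Lyapunov sequence $R^{s,k} := \E[\psi(x^{s,k})] + c_k\, \E\|x^{s,k}-\tilde{x}^s\|^2$, where the nonnegative constants $c_0,\ldots,c_m$ are defined by a recursion of the form $c_k = c_{k+1}(1+\alpha\beta) + \tfrac{\mL^2 \alpha^2}{b}$ with terminal value $c_m=0$ and some parameter $\beta>0$. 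The step-size threshold $\hat\alpha$ in \cref{thm:general_case} is calibrated so that the resulting $c_k$ stay bounded uniformly in $m$.

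Next, I would derive a per-iteration estimate
\[
R^{s,k+1} \le R^{s,k} - \Gamma\,\E\|F^\alpha_{\mathrm{nat}}(x^{s,k})\|^2 + C\alpha^2(\epsilon_k^s)^2,
\]
with $\Gamma \ge \tfrac{\alpha(1-\bar\eta)^3}{2}$ and an explicit constant $C>0$. The derivation combines three ingredients: (a) descent from the exact proximal point step viewed as $\prox{\alpha\psi_{\mathcal S_k}}^{I+\alpha M_{\mathcal S_k}}(x^k)$, where convexity of $\psi_{\mathcal S_k} + \tfrac{1}{2}\|\cdot\|_{M_{\mathcal S_k}}^2$ (ensured by \ref{A1}) together with the firm nonexpansiveness \cref{eq:nonexp} and the $L$-smoothness of $\nabla f$ yield an explicit drop in $\psi$; (b) the \texttt{SVRG}-type variance bound $\E\|\nabla f_{\mathcal S_k}(x)+v^k - \nabla f(x)\|^2 \le \tfrac{\mL^2}{b}\|x-\tilde{x}\|^2$, which feeds the $c_k$ recursion; and (c) a Lipschitz correction for the inexactness of $x^{s,k+1}$ from \cref{prop:inexactness_bound}, producing the $(\epsilon_k^s)^2$ term. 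The step-size condition $\alpha\le\hat\alpha$ is precisely what is needed to ensure that $\Gamma$ stays bounded below by $\alpha(1-\bar\eta)^3/2$.

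Option I sets $\tilde{x}^{s+1}=x^{s,m}$, so with $c_m=0$ one has $R^{s,m} = \E\psi(\tilde{x}^{s+1}) = R^{s+1,0}$. Summing the one-step inequality over $k=0,\ldots,m-1$ and $s=0,\ldots,S$ telescopes to
\[
\tfrac{\alpha(1-\bar\eta)^3}{2}\sum_{s=0}^{S}\sum_{k=0}^{m-1}\E\|F^\alpha_{\mathrm{nat}}(x^{s,k})\|^2 \le \psi(\tilde{x}^0) - \E\psi(\tilde{x}^{S+1}) + C\alpha^2 \sum_{s=0}^{S}\sum_{k=0}^{m-1}(\epsilon_k^s)^2.
\]
Using $\psi(\tilde{x}^{S+1})\ge \psi^\star$ from \ref{A2}, recognizing that $x_\pi$ is drawn uniformly from the $m(S+1)$ iterates, and dividing by $m(S+1)\cdot\tfrac{\alpha(1-\bar\eta)^3}{2}$ yields the claimed bound.

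The main obstacle is matching constants: the term $[1+m/\sqrt{2b}]\mL$ inside $\hat\alpha$ reflects that the inner loop length $m$ enters the \texttt{SVRG} variance contribution through the telescoping of $c_k$ on $[0,m]$; calibrating $\beta\sim\mL/\sqrt{b}$ so that the $c_k$ remain $\mathcal O(1)$ when $c_m=0$, while simultaneously extracting the three $(1-\bar\eta)$ factors from the Young's-inequality-type splittings of the variance and inexactness terms, is the delicate bookkeeping that forces the precise form of $\hat\alpha$. Since \cref{thm:general_case} must already contain the underlying descent machinery (it yields almost-sure convergence of $F_\mathrm{nat}$ under the same assumptions and a broader step-size regime), the corollary essentially amounts to specializing that estimate to the constant step-size setting and averaging.
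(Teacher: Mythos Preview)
Your final paragraph is exactly right and is in fact what the paper does: the corollary is an immediate consequence of the descent estimate already established inside the proof of \cref{thm:general_case}. Specifically, that proof derives (with constant $\alpha$) the telescoping inequality
\[
\E[\psi(\tilde{x}^{s+1})]-\E[\psi(\tilde{x}^s)] \le -\tfrac{1-\bar\eta}{2}\,\tau_s^1 + \bar\nu\alpha\,{\sum}_{k=0}^{m-1}(\epsilon_k^s)^2,\qquad
\tau_s^1 = {\sum}_{k=0}^{m-1}\tfrac{1}{\alpha}\E\|\bar x^{s,k+1}-x^{s,k}\|^2,
\]
where $\bar x^{s,k+1}:=\prox{\alpha\psi}^{I+\alpha M_N}(x^{s,k})$. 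Summing over $s=0,\dots,S$, using $\psi\ge\psi^\star$, and then invoking the bound $\|\bar x^{s,k+1}-x^{s,k}\|\ge(1-\bar\eta)\|F^\alpha_{\mathrm{nat}}(x^{s,k})\|$ (from \cite{Nesterov2013,Dima2018}) gives the stated rate after averaging over the $m(S+1)$ iterates. That is the entire proof.

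Your main proposal, however, takes a genuinely different route. You set up the Reddi-style Lyapunov sequence $R^{s,k}=\E[\psi(x^{s,k})]+c_k\E\|x^{s,k}-\tilde x^s\|^2$ with a backward $c_k$-recursion. The paper does \emph{not} do this: the proof of \cref{thm:general_case} works directly with $\psi(x^{k+1})-\psi(x^k)$, introduces the deterministic auxiliary step $\bar x^{k+1}=\prox{\alpha\psi}^{I+\alpha M_N}(x^k)$, and controls the variance contribution via the crude Cauchy--Schwarz estimate $\sum_{k=0}^{m-1}\|x^k-\tilde x^s\|^2\le \tfrac{m(m-1)}{2}\sum_{i}\|x^{i+1}-x^i\|^2$ rather than a $c_k$-recursion. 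The $(1-\bar\eta)^3$ then arises as one factor from the telescoping coefficient $(1-\bar\eta)/2$ and two more from the $\bar x$-to-$F^\alpha_{\mathrm{nat}}$ comparison, not from tuning Young's inequality splittings inside a Lyapunov argument. Your approach could presumably be made to work, but matching the specific $\hat\alpha$ of \cref{thm:general_case} and extracting the constant $\Gamma\ge\alpha(1-\bar\eta)^3/2$ from a $c_k$-recursion with $\beta\sim\mL/\sqrt{b}$ is not obvious; the paper's $\hat\alpha$ is calibrated to its own $\sigma_k^i$ choices, not to a Reddi-style recursion.
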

The proofs are given in \cref{proof:thm:general_case}.
%
\subsection{Strongly Convex Case}
In this section, we establish q-linear convergence of \cref{alg:snspp} if $\psi$ is strongly convex. We derive -- similar to Thm.\ 3.1 in \cite{Xiao2014} -- convergence in terms of the objective function if we assume each $f_i$ to be convex. We present an additional result for weakly convex $f$ and strongly convex $\phi$. The proofs are given in \cref{proofs:str_convex}.
In the following, we suppose that in iteration $s$ of the outer and iteration $k$ of the inner loop of \cref{alg:snspp}, the tolerances $\epsilon_k^s$ satisfy the bound 
\begin{align}
\label{eqn:eps_cond_strconvex}
\epsilon_{k}^s \leq \delta_s \|F_{\mathrm{nat}}(\tilde x^s)\| 
\end{align}
for all $k \in \{0,\dots,m-1\}$, $s \in \N$, and for some sequence
$\R_{++} \ni \delta_s \to 0$. Notice, since $\nabla f(\tilde x^s)$ is known, $\|F_{\mathrm{nat}}(\tilde x^s)\|$ can be computed without additional costs.
\begin{theorem} \label{thm:str_convex_case_obj}
Let \cref{asum:main} and \cref{asum:conjugate} be satisfied and suppose that each function $f_i$ is convex and $\psi=f+\phi$ is $\mu$-strongly convex with $\mu>0$. 
Consider \cref{alg:snspp} with $S=\infty$ and Option II using constant step sizes
$\alpha_k^s=\alpha>0$, and constant batch sizes $b_k^s=b$. For $\theta \in(0,1/2)$, let the step size $\alpha$ satisfy
\begin{align}
\label{eqn:strcon_obj_alpha_assumption}
\alpha \leq \left[L+\tfrac{{\mL}}{b}(\tfrac{4}{1-2\theta}+3)\right]^{-1}
\end{align}  
and let condition \cref{eqn:eps_cond_strconvex} hold for a given sequence $\{\delta_s\}$. If $\delta_s$, $s \in \N$, is sufficiently small and the inner loop length $m$ sufficiently large, then $\{\psi(\tilde{x}^s)\}$ converges q-linearly in expectation to $\psi^\star$ with rate at least $1-\theta$, i.e.,
for all $s$, we have
\begin{align*} \E[\psi(\tilde{x}^{s+1})-\psi^\star] \leq (1-\theta) \E[\psi(\tilde{x}^{s})-\psi^\star]. 
\end{align*}
\end{theorem}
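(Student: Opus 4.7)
The plan is to follow the proximal-SVRG convergence template of Xiao--Zhang \cite[Thm.~3.1]{Xiao2014}, adapted in two respects: (i) the gradient in \cref{eqn:update_x_stochastic} is evaluated at the new iterate, so the analysis rests on the optimality condition of the implicit subproblem rather than on a forward step; and (ii) the iterate $x^{k+1}$ is only an approximate minimizer of that subproblem, quantitatively controlled by \cref{prop:inexactness_bound} together with the tolerance condition \cref{eqn:eps_cond_strconvex}. Fix an outer index $s$, abbreviate $\tilde x := \tilde x^s$, $\alpha := \alpha_k^s$, $b := b_k^s$, set $x^\star := \argmin \psi$, and let $\hat x^{k+1}$ denote the exact minimizer of the subproblem associated with $(x^k, v^k, \mathcal S_k)$; by \cref{prop:inexactness_bound}, $\|x^{k+1} - \hat x^{k+1}\| \leq c_{\mathrm{inx}} \epsilon_k^s$ with $c_{\mathrm{inx}} := \alpha \bar A^{1/2}/(\mu_\ast \sqrt b)$.

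I would first derive a per-iteration recursion for $\E\|\hat x^{k+1} - x^\star\|^2$. Convexity of the $f_i$'s makes the subproblem defining $\hat x^{k+1}$ strongly convex in the $(I + \alpha M_{\mathcal S_k})$-metric, and its optimality yields a standard three-point inequality relating $\psi_{\mathcal S_k}(\hat x^{k+1}) + \langle v^k, \hat x^{k+1}\rangle$ to $\psi_{\mathcal S_k}(x^\star) + \langle v^k, x^\star\rangle$ plus a telescoping difference of squared distances. Taking conditional expectation over $\mathcal S_k$, using $\E[\psi_{\mathcal S_k}(y) + \langle v^k, y\rangle] = \psi(y) + \langle \nabla f(\tilde x), y - \tilde x\rangle$ for deterministic $y$, transferring $\psi_{\mathcal S_k}(\hat x^{k+1})$ to $\psi(\hat x^{k+1})$ via the descent lemma, and applying the SVRG variance bound \cite[Cor.~3.5]{Xiao2014} (with per-sample smoothness constant $\mL$ and mini-batch size $b$) to the remaining cross term, produces a recursion
\[
\E\|\hat x^{k+1} - x^\star\|^2 \leq \|x^k - x^\star\|^2 - 2\alpha(1 - c_1 \alpha)\bigl[\E \psi(\hat x^{k+1}) - \psi^\star\bigr] + c_2 \alpha^2 (\psi(\tilde x) - \psi^\star),
\]
with explicit constants $c_1, c_2$ depending linearly on $L$ and $\mL/b$; these constants are what produce the factor $4/(1-2\theta) + 3$ in \cref{eqn:strcon_obj_alpha_assumption}.

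Next I would incorporate inexactness. Combining \cref{prop:inexactness_bound} with \cref{eqn:eps_cond_strconvex} yields $\|x^{k+1} - \hat x^{k+1}\| \leq c_{\mathrm{inx}} \delta_s \|F_{\mathrm{nat}}(\tilde x)\|$. Standard Lipschitz manipulations upgrade the recursion from $\hat x^{k+1}$ to $x^{k+1}$ at the price of additive terms of order $\mathcal{O}(\delta_s) \|F_{\mathrm{nat}}(\tilde x)\| \cdot \|x^{k+1} - x^\star\| + \mathcal{O}(\delta_s^2) \|F_{\mathrm{nat}}(\tilde x)\|^2$; invoking the proximal descent bound $\|F_{\mathrm{nat}}(\tilde x)\|^2 \leq c_L (\psi(\tilde x) - \psi^\star)$ together with Young's inequality, these collapse to $\mathcal{O}(\delta_s^2)(\psi(\tilde x) - \psi^\star) + \mathcal{O}(\delta_s^2)(\psi(x^{k+1}) - \psi^\star)$, both absorbable into $c_1, c_2$ when $\delta_s$ is small. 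Summing the recursion over $k = 0, \dots, m-1$, using $x^0 = \tilde x$, applying Jensen's inequality to the Option~II average $\tilde x^{s+1} = (1/m) \sum_k x^k$, and combining with $\|\tilde x - x^\star\|^2 \leq 2(\psi(\tilde x) - \psi^\star)/\mu$ from strong convexity gives
\[
\E[\psi(\tilde x^{s+1}) - \psi^\star] \leq \Bigl[\tfrac{1}{\alpha m \mu (1 - c_1 \alpha)} + \tfrac{c_2 \alpha}{2(1 - c_1 \alpha)} + \mathcal{O}(\delta_s^2)\Bigr] \E[\psi(\tilde x) - \psi^\star].
\]
The main obstacle is tracking $c_1, c_2$ with enough precision so that the step size restriction \cref{eqn:strcon_obj_alpha_assumption} yields $c_2 \alpha / [2(1 - c_1 \alpha)] \leq 1 - 2\theta$; the explicit factor $4/(1-2\theta) + 3$ is calibrated for exactly this trade-off. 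Once this is established, choosing $m$ large enough forces the $1/(\alpha m \mu)$ term to be at most $\theta/2$ and $\delta_s$ small enough forces the $\mathcal{O}(\delta_s^2)$ term to be at most $\theta/2$, so the overall coefficient is bounded by $1 - \theta$, yielding the claimed q-linear contraction.
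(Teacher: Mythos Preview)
Your overall strategy---Xiao--Zhang template, optimality of the implicit subproblem, inexactness control via \cref{prop:inexactness_bound} and \cref{eqn:eps_cond_strconvex}---is correct and matches the paper's, but two steps are glossed over in ways that matter.

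\textbf{The per-iteration recursion omits the $\psi(x^k)-\psi^\star$ term.} The SVRG variance bound (\cref{cor:c1}(ii), or \cite[Cor.~3.5]{Xiao2014}) must be applied at a point that is measurable with respect to the history prior to drawing $\mathcal S_k$; this forces the bound to be taken at $x^k$, producing $\tfrac{4\mL}{b}(\psi(x^k)-\psi^\star+\psi(\tilde x)-\psi^\star)$ on the right-hand side. Your stated recursion shows only $\psi(\hat x^{k+1})$ and $\psi(\tilde x)$, which is not achievable as written. In the paper the $\psi(x^k)$ contribution is kept explicit: after a Young split with parameter $\rho_2$, one obtains (cf.\ \cref{eqn:42}) a coefficient $\tfrac{4\alpha\mL}{\rho_2 b}$ on $\psi(x^k)-\psi^\star$, and a further $\tfrac{2\alpha\mL}{\rho_2 b}$ arises from converting $\|x^k-x^\star\|^2$ to $\beta(\rho_1)\|x^k-x^\star\|^2$. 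Summing and moving these to the left yields the net factor $2\alpha(1-\tfrac{3\mL}{\rho_2 b})$. Setting $\rho_2=\tfrac{\mL}{b}(\tfrac{4}{1-2\theta}+3)$ then makes $\tfrac{4\mL}{\rho_2 b}(1-\tfrac{3\mL}{\rho_2 b})^{-1}=1-2\theta$, and the step-size condition is the separate requirement $(L+\rho_2)\alpha\le 1$. So the ``$+3$'' does not come from a coefficient of the form $1-c_1\alpha$ as you suggest; it is produced by the interplay between the $\psi(x^k)$ variance term and the $\rho_1$-mechanism, both of which your recursion hides.

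\textbf{The transfer from $\hat x^{k+1}$ to $x^{k+1}$ is not ``standard Lipschitz''.} You first derive a recursion for $(\|\hat x^{k+1}-x^\star\|^2,\psi(\hat x^{k+1}))$ and then propose to upgrade it to $(\|x^{k+1}-x^\star\|^2,\psi(x^{k+1}))$. The distance transfer is fine via Young, but since $\phi$ is merely lower semicontinuous, $\psi$ is not locally Lipschitz and $\psi(\hat x^{k+1})-\psi(x^{k+1})$ cannot be bounded by $\|\hat x^{k+1}-x^{k+1}\|$ alone; one needs the specific subgradient furnished by the proximity operator and control of $\|x^k-x^{k+1}\|$, which is considerably more than you indicate. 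Worse, after the transfer the left-hand side carries $\psi(x^{k+1})$ while the variance term on the right carries $\psi(x^k)$, and you also need the $\|x^k-x^\star\|^2$ (not $\|\hat x^k-x^\star\|^2$) to telescope---so the transfer has to be done at every index consistently. The paper sidesteps all of this: it observes that the inexact iterate $x^{k+1}$ itself satisfies a perturbed optimality inclusion $-h^{k+1}+\tfrac{1}{\alpha}(x^k-x^{k+1})\in\partial\Psi_k(x^{k+1})$ (with $\Psi_k$ the convex model), and applies strong convexity directly to $x^{k+1}$. The inexactness then enters only through the explicit residual $\|h^{k+1}\|$ and $\|\hat x^{k+1}-x^{k+1}\|$, both bounded by \cref{prop:inexactness_bound}, and no comparison of $\psi$ at exact versus inexact iterates is ever needed.
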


More formal and explicit conditions on $\delta_s$ and $m$ can be found in the proof of \cref{thm:str_convex_case_obj} in \cref{proofs:str_convex}.
\begin{theorem} \label{thm:str_convex_case}
Let \cref{asum:main} and \cref{asum:conjugate} be satisfied and let $\phi$ and $\psi=f+\phi$ be $\mu_\phi$- and $\mu$-strongly convex, respectively, with 
\[ \mu_\phi I - M_N \succeq \mu I. \]
Consider \cref{alg:snspp} with $S=\infty$ and Option I, using constant step sizes $\alpha_k^s=\alpha>0$ and constant batch sizes $b_k^s=b$. Assume that $\alpha \leq [L+\sqrt{2/b}\cdot m\mL]^{-1}$ and let \cref{eqn:eps_cond_strconvex} hold for a given sequence $\{\delta_s\}$ satisfying $\delta_s < \min\{\frac{2\alpha \mu}{1+\alpha\mu_\phi}, \frac{1+\alpha(\mu+\mu_\phi)}{1+\alpha\mu_\phi}\}$ for all $s$. 
%
%
Then, the iterates $\{\tilde x^s\}$ converge q-linearly in expectation to the unique solution $x^\star$ of problem \cref{prob:deterministic}, i.e.,
as $s \to \infty$, we have
\begin{align*} \E\|\tilde x^{s+1}-x^\star\|^2 \leq \left[ 1- \tfrac{2\alpha\mu}{1+\alpha(\mu_\phi+\mu)} + \mathcal O(\delta_s) \right]  \E\|\tilde x^{s}-x^\star\|^2. \end{align*}
\end{theorem}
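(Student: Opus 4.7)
The plan is to derive the contraction per outer step by (i) reducing the inexact iterate to the exact one via \cref{prop:inexactness_bound}, (ii) establishing a one-inner-step pathwise inequality of the form ``weighted quadratic Lyapunov decrease minus drift plus stochastic noise'', (iii) controlling the noise through an SVRG-style variance bound, and (iv) telescoping over $k=0,\ldots,m-1$. Let $\hat{x}^{k+1}$ denote the exact solution of \cref{eqn:update_x_stochastic}. By \cref{prop:inexactness_bound}, the tolerance bound \cref{eqn:eps_cond_strconvex}, Lipschitz continuity of $F_{\mathrm{nat}}$ with constant $2+L$, and $F_{\mathrm{nat}}(x^\star)=0$, one has
\[
\|x^{k+1}-\hat x^{k+1}\| \le \frac{\alpha\bar A^{1/2}}{\mu_\ast\sqrt b}\,\epsilon_k^s \le C_1\,\delta_s\,\|\tilde x^s-x^\star\|,
\]
so the inexactness contributions accumulated over the inner loop can be absorbed into the final $\mathcal O(\delta_s)$ term. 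It therefore suffices to establish the target recursion for the exact subproblem solves.

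For the exact step, the optimality condition of the variable-metric proximal subproblem provides $\zeta^{k+1}\in\partial\phi(\hat x^{k+1})$ with $(I+\alpha M_{\mathcal S_k})(\hat x^{k+1}-x^k) + \alpha[\nabla f_{\mathcal S_k}(\hat x^{k+1}) + v^k + \zeta^{k+1}]=0$, while $0=\nabla f(x^\star)+\zeta^\star$ for some $\zeta^\star\in\partial\phi(x^\star)$. I would take the inner product of the difference with $\hat x^{k+1}-x^\star$, expand $\langle x^k-\hat x^{k+1},\hat x^{k+1}-x^\star\rangle$ via the three-point identity, and then combine (a) the $\mu_\phi$-strong monotonicity of $\partial\phi$ with (b) the $\mu$-strong monotonicity of $\partial\psi$ at $x^\star$, using the hypothesis $\mu_\phi I-M_N\succeq \mu I$ to neutralise the weak convexity of $f$. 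A convex combination of the two monotonicity inequalities, weighted so that the $\mu_\phi$-mass combines with the implicit curvature $1/\alpha$ from the quadratic regulariser and the $\mu$-mass contributes $2\alpha\mu$ in the numerator, yields a pathwise bound of the form
\[
\|\hat x^{k+1}-x^\star\|^2 \le \Big[1-\tfrac{2\alpha\mu}{1+\alpha(\mu_\phi+\mu)}\Big]\,\|x^k-x^\star\|^2 - \|\hat x^{k+1}-x^k\|^2_{I+\alpha M_{\mathcal S_k}} + 2\alpha\,\bigl\langle e_k,\,\hat x^{k+1}-x^\star\bigr\rangle,
\]
where $e_k := \nabla f(\hat x^{k+1}) - [\nabla f_{\mathcal S_k}(\hat x^{k+1})+v^k]$ is the variance-reduced stochastic error.

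Conditioning on the sigma-algebra generated by $\tilde x^s, x^0,\ldots, x^k$ and using $\E_k[\nabla f_{\mathcal S_k}(y)-\nabla f_{\mathcal S_k}(\tilde x^s)]=\nabla f(y)-\nabla f(\tilde x^s)$ together with the standard SVRG variance estimate
\[
\E_k\|e_k\|^2 \le \tfrac{\bar L^2}{b}\,\|\hat x^{k+1}-\tilde x^s\|^2 \le \tfrac{2\bar L^2}{b}\bigl(\|\hat x^{k+1}-x^\star\|^2+\|\tilde x^s-x^\star\|^2\bigr),
\]
the cross-term is bounded via Young's inequality and the resulting per-step inequalities are summed over $k=0,\ldots,m-1$. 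The step-size condition $\alpha\le[L+\sqrt{2/b}\,m\bar L]^{-1}$ is calibrated so that the accumulated $\|\hat x^{k+1}-x^\star\|^2$ contributions from the variance bound are absorbed on the left-hand side, while the telescoping of $-\|\hat x^{k+1}-x^k\|^2$ dominates the drift $\|\hat x^{k+1}-\tilde x^s\|^2$. Combining the resulting estimate for $\E\|\hat x^m-x^\star\|^2$ with the inexactness reduction from the first paragraph delivers the claimed inequality $\E\|\tilde x^{s+1}-x^\star\|^2 \le [1-\tfrac{2\alpha\mu}{1+\alpha(\mu_\phi+\mu)}+\mathcal O(\delta_s)]\,\E\|\tilde x^s-x^\star\|^2$.

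The main obstacle is the sharp pathwise inequality: isolating the precise factor $2\alpha\mu/(1+\alpha(\mu_\phi+\mu))$ requires a delicate weighting of the $\mu_\phi$- and $\mu$-monotonicity relations in the variable metric $I+\alpha M_{\mathcal S_k}$, and an exact bookkeeping of how $\mu_\phi I-M_N\succeq \mu I$ cancels the weakly-convex curvature of $f$. A secondary hurdle is the cross-term $\langle e_k,\hat x^{k+1}-x^\star\rangle$: since $\hat x^{k+1}$ is $\mathcal S_k$-measurable, the conditional mean of the inner product is not zero, and a decoupling step that introduces an $\mathcal S_k$-independent reference point (naturally $\tilde x^s$ or $x^\star$) is needed before the SVRG variance estimate can be applied.
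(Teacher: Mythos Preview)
Your sketch follows a different route than the paper and, as written, has two real gaps that you yourself flag but do not close.

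\textbf{The variance term is placed at the wrong point.} You define $e_k=\nabla f(\hat x^{k+1})-[\nabla f_{\mathcal S_k}(\hat x^{k+1})+v^k]$ and then invoke the SVRG bound $\E_k\|e_k\|^2\le \tfrac{\bar L^2}{b}\|\hat x^{k+1}-\tilde x^s\|^2$. That bound (\cref{cor:c1}) requires the evaluation point to be independent of $\mathcal S_k$, which $\hat x^{k+1}$ is not; the ``decoupling step'' you mention is essential, not secondary, and you do not carry it out. The paper never evaluates the stochastic error at $\hat x^{k+1}$: it moves the gradient difference to $x^k$ before taking expectation, using convexity of $\hat F_k(x):=f_{\mathcal S_k}(x)+\tfrac12\|x-x^k\|^2_{M_{\mathcal S_k}}$ (which gives $\hat F_k(x^k)-\hat F_k(x^{k+1})\le\langle\nabla f_{\mathcal S_k}(x^k),x^k-x^{k+1}\rangle$) together with $L$-smoothness of $f$. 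This produces a noise term $\nabla f(x^k)-\nabla f_{\mathcal S_k}(x^k)-v^k$ evaluated at the $\mathcal S_k$-independent point $x^k$, to which \cref{cor:c1} applies directly; the drift variable is then $\|x^k-\tilde x^s\|^2$, summed via \cref{eq:vr-sumsum} and absorbed by $-\|x^{k+1}-x^k\|^2$ under the stated step-size bound with $\rho_2=\mL\sqrt{m(m-1)/(2b)}$.

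\textbf{The sharp factor is not derived.} Your claimed pathwise inequality with coefficient $1-\tfrac{2\alpha\mu}{1+\alpha(\mu_\phi+\mu)}$ in front of $\|x^k-x^\star\|^2$ is asserted to follow from a ``convex combination'' of the $\mu_\phi$- and $\mu$-monotonicity relations in the metric $I+\alpha M_{\mathcal S_k}$, but the random metric and the weakly convex $f$ make this delicate, and you do not show it. The paper does not proceed via monotonicity. It uses that $\Psi_k+\tfrac{1}{2\alpha}\|\cdot-x^k\|^2$ is $(\mu_\phi+\tfrac{1}{\alpha})$-strongly convex with minimizer $\hat x^{k+1}$, combines the resulting function-value inequality with a subgradient inequality for $\Psi_k$ at $x^{k+1}$, and then sets $x=x^\star$. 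The three ingredients that produce the factor are: (i) the $(1+\alpha\mu_\phi)$ weight on $\|\hat x^{k+1}-x^\star\|^2$ from strong convexity of $\phi$; (ii) the bound $\psi(x^\star)-\psi(x^{k+1})\le -\tfrac{\mu}{2}\|x^{k+1}-x^\star\|^2$; (iii) the hypothesis $M_N\preceq(\mu_\phi-\mu)I$, which converts $\alpha\|x^\star-x^k\|^2_{M_N}$ into $\alpha(\mu_\phi-\mu)\|x^\star-x^k\|^2$. These yield $[1+\alpha(\mu_\phi+\mu)]$ on the left versus $[1+\alpha(\mu_\phi-\mu)]$ on the right, whence the ratio. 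Finally, the paper does \emph{not} separate inexactness upfront as you propose: it carries an error vector $h^{k+1}$ (bounded via \cref{prop:inexactness_bound}) through the whole estimate and absorbs it by choosing the auxiliary parameter $\rho_1=\delta_s$, which is precisely what generates the $\mathcal O(\delta_s)$ correction. Reducing to $\hat x^{k+1}$ first does not cleanly telescope, since $\hat x^{k+1}$ is built from $x^k$, not from $\hat x^k$.
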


\section{Numerical Experiments}
\label{sec:numeric}
In the following, we investigate the practical performance of \cref{alg:snspp} which we will refer to as \texttt{SNSPP}. Specifically, we compare \texttt{SNSPP} with three state-of-the-art and benchmark stochastic algorithms -- namely \texttt{SVRG}, \texttt{SAGA}, and \texttt{AdaGrad} for all of which we use their proximal versions. 
\begin{itemize}
\item \texttt{SVRG} \cite{Johnson2013,Xiao2014}\footnote{We use the acronym \texttt{SVRG} even though the method is named \texttt{Prox-SVRG} in the original article.} is a (minibatch) stochastic gradient method with variance reduction. Its proximal version was proposed in \cite{Xiao2014} for convex composite problems and extended to nonconvex objectives in \cite{J.Reddi2016}.
\item \texttt{SAGA} \cite{Defazio2014} is a stochastic proximal gradient method utilizing a different variance reduction strategy. It has been analyzed for the nonconvex case in \cite{J.Reddi2016}. 
\item \texttt{AdaGrad} \cite{Duchi2011} is a proximal adaptive gradient method that assigns larger step sizes to features which have been rarely explored up to the current iteration.
\end{itemize}

\subsection{General Setting}
\label{sec:numeric_params}
For all experiments, we set the parameter $m$ in \cref{alg:snspp} to $m=10$. For \cref{alg:semismooth_newton}, we use $\hat{\gamma} = 0.4$, $\eta = 10^{-5}$, $\rho = 0.5$, $\tau = 0.9$, $\tau_1 =0.5$, $\tau_2 = 2\cdot10^{-4}$ and terminate if $\|\nabla \mathcal{U}(\xi^{j})\| \leq 10^{-3}$. Typically, \cref{alg:semismooth_newton} reaches the desired accuracy within less than $10$ iterations. We also ran the experiments with the adaptive accuracy $\epsilon_{k}^s = \delta_s\|F_{\mathrm{nat}}(\tilde x^s)\|$, $\delta_s = 0.1$, introduced in \cref{eqn:eps_cond_strconvex} but did not observe any significant effects on the results.
We precompute an estimate for $\psi^\star$ by running a solver for a large number of iterations: for the experiments in \cref{sec:experiments_logreg}, we use \texttt{scikit-learn} for this step; for the experiments in \cref{sec:experiments_student} we use our own implementation of \texttt{SAGA}.
We use constant batch and step sizes throughout all experiments.
The experiments were performed in \texttt{Python 3.8}.\footnote{Code is available at \url{https://github.com/fabian-sp/snspp}.}
\subsection{Logistic Regression with \boldmath$\ell_1$-Regularization}
\label{sec:experiments_logreg}
Sparse logistic regression is a classical model for binary classification \cite{Koh2007, Hastie2009}. Let the coefficient matrix $A = (a_1^\top,\dots,a_N^\top)^\top \in \R^{N\times n}$ and the binary labels $b_i \in \{-1,1\},~ i=1,\dots,N$ be given. The associated sparse regression problem can then be formulated as follows:
\begin{align*}
\min_{x}~\oneover{N}\sum_{i=1}^{N} \ln(1+\exp({-b_i\langle a_i, x\rangle})) + \lambda \|x\|_1, \quad \lambda > 0.
\end{align*}
This problem is of the form \cref{prob:deterministic} with $m_i = 1 $, $A_i = b_i a_i$, and $f_i(z) = f_{\text{log}}(z) := \ln(1+\exp({-z}))$ for all $i=1,\dots,N$. The nonsmooth part is given by $\phi(x) = \lambda \|x\|_1$.

Since $f_{\text{log}}$ is convex, we can set $\gamma_i = 0$ for all $i$. The conjugate $f_{\text{log}}^\ast$ of the logistic loss function is given by \cite{Koh2007}:
\begin{align}
f_{\text{log}}^\ast(z) = 
\begin{cases}
-z\ln(-z) + (1+z)\ln(1+z) \quad &-1 < z< 0\\
+\infty \quad &\text{otherwise}.
\end{cases}
\end{align}
The mapping $f_{\text{log}}^\ast$ is $\mathcal{C}^\infty$ on $(-1,0)$ and locally Lipschitz. For all $z \in (-1,0)$, we have
$(f_{\text{log}}^\ast)^\prime(z) = \ln(1+z)-\ln(-z)$ and $(f_{\text{log}}^\ast)^{\prime\prime}(z) = - \frac{1}{z^2+z} \geq 4$.
We conclude that $f_{\text{log}}^\ast$ is essentially differentiable and strongly convex on $(-1,0)$.

The proximity operator of $\phi$ and its Clarke differential are discussed in, e.g., \cite{Zhang2020}. The proximity operator is the well-known soft-thresholding operator $\prox{\lambda \|\cdot\|_1}(x) = \text{sign}(x) \odot \max\{|x| - \lambda \mathds{1}, 0\}$ where ``$\odot$'' denotes component-wise multiplication. We can choose the generalized derivative $D \in \partial \prox{\lambda \|\cdot\|_1}(x)$ as follows: $D = \mathrm{Diag}(d_i)_{i=1,\dots,n}$ and $d_i = 0$ if $|x_i| \leq \lambda$ and $d_i = 1$ if $|x_i| > \lambda$. Lem.\ 2.1 in \cite{Zhang2020} ensures that $\prox{\lambda \|\cdot\|_1}$ is strongly semismooth. Altogether, \cref{asum:main} and \ref{asum:conjugate} are satisfied (due to strong semismoothness of $\prox{\lambda \|\cdot\|_1}$, \cref{asum:semismooth_alternative} holds as well).
\subsubsection{Description of Datasets}
We use several standard datasets for our experiments, listed in \cref{table1}.\footnote{Dataset \texttt{sido0} is downloaded from \url{http://www.causality.inf.ethz.ch/challenge.php?page=datasets\#cont}, \texttt{higgs} from \url{https://archive.ics.uci.edu/ml/datasets/HIGGS}, and \texttt{mnist} from \url{openml.org} using the \texttt{scikit-learn} API. All other datasets are downloaded from LIBSVM, \url{https://www.csie.ntu.edu.tw/~cjlin/libsvmtools/datasets/binary.html}.} 
The dataset \texttt{mnist} contains $28\times28$ pixel pictures of hand-written digits \cite{LeCun2010}. In order to obtain binary labels, we classify the two sets of digits $\{0,3,6,8,9\}$ and $\{1,2,4,5,7\}$.  \texttt{gisette} is derived from \texttt{mnist} but with additional higher-order and distractor features \cite{Guyon2005}. 
The \texttt{madelon.2} dataset is obtained as follows: we first scale the original \texttt{madelon} dataset (from LIBSVM) having 500 features and 2000 samples obtaining mean-zero and unit-variance features. Then, we apply a polynomial feature expansion of degree two, i.e., we add all pairwise products of features and a constant feature, resulting in $n=125751$. 
For \texttt{mnist} and \texttt{higgs}, we apply standard preprocessing to obtain mean-zero and unit-variance features. The other datasets are already suitably scaled and therefore not preprocessed.
For \texttt{mnist}, \texttt{gisette}, \texttt{sido0}, \texttt{covtype}, and \texttt{news20}, we use 80\% of the dataset samples for training and the remaining 20\% are used as validation set.
Note that \texttt{SNSPP} and \texttt{SVRG} compute the full gradient for the first time at the starting point $\tilde{x}^0$. In contrast to \texttt{SVRG}, the first iterate of \texttt{SNSPP} is not deterministic and therefore high variance in the gradient at the starting point could lead to unfavorable performance. In particular, we observe such effect for the \texttt{sido0} dataset. We find that this behavior can be easily prevented by running one iteration of \texttt{SNSPP} without variance reduction before computing the full gradient. However, for better comparability, for \texttt{sido0} we run one epoch of \texttt{SAGA} and use the final iterate as starting point for all methods. For all other datasets, we use $\tilde{x}^0=0$ as initial point for all algorithms.

\begin{table}[t]
	\centering
	\begin{tabular}{lcccc}  
		\cmidrule[1pt](){1-5} 
		Dataset & $N$ & $n$ & $\psi^\star$ & $\lambda$ \\[0.5ex]  
		\cmidrule(){1-5}  \\[-2.5ex]
		\texttt{mnist} & 56000 & 784 & 0.552 & 0.02 \\ 
		\texttt{gisette} & 4800 & 4955 & 0.476 & 0.05 \\ 
		\texttt{sido0} & 10142 & 4932 & 0.146 & 0.01 \\ 
		\texttt{covtype} & 581012 & 54 & 0.642 & 0.005 \\
		\texttt{higgs} & $11\cdot 10^6$ & 28 & 0.657 & 0.005 \\
		\texttt{madelon.2} & 2000 & 125751 & 0.568 & 0.02\\
		\texttt{news20} & 15996 & 1355191 & 0.653 & 0.001 \\
		\cmidrule[1pt](){1-5} \\[-1.5ex]
	\end{tabular}
	\caption{Information of the different datasets for sparse logistic regression. The optimal objective function value $\psi^\star$ is rounded to $3$ digits.}
	\label{table1}
\end{table}
\subsubsection{Subproblem Complexity}
We first illustrate the impact of solving the subproblems, i.e., invoking \cref{alg:semismooth_newton}, on the overall performance of \texttt{SNSPP}. \cref{fig:batch_size} depicts the subproblem complexity (in terms of runtime) and the overall progress of \texttt{SNSPP} for different choices of batch sizes using the \texttt{news20} dataset. As the batch size $b$ determines the dimension of the subproblem, we see a sharp increase in the runtime for larger choices of $b$ (bottom right). However, a larger batch size also allows to take larger steps and therefore more progress per iteration can be made as demonstrated in the left plot of \cref{fig:batch_size}. In our experiments, we typically observe that the subproblems can be solved very efficiently for batch sizes up to the order of few hundreds. For much larger batch sizes, the resulting higher computational costs of the subproblem will start to outweigh the benefits of reducing the variance of $\nabla f_\mathcal{S}$.
%
%
\subsubsection{Stability Analysis} 
The main focus of our numerical test is put on stability experiments with respect to hyperparameter selection, in particular, the step size. In practical scenarios, it is unlikely that a solver is executed with an intensively tuned step size due to tuning budgets. Hence, it is important to evaluate optimization methods considering the amount of step size tuning needed to reach optimal performance/runtime \cite{Schmidt2021}.
A similar comparison of \texttt{SPP} and \texttt{SGD} was conducted in \cite{Asi2019,Davis2019}, but without variance reduction, on a single batch, and with synthetic data only.
We compare \texttt{SNSPP} to the other variance-reduced methods \texttt{SAGA} and \texttt{SVRG}.  
We solve the $\ell_1$-regularized logistic regression problem for several datasets, for a range of step sizes $\alpha$ and different batch sizes $b$.\footnote{We always include results for \texttt{SAGA} with $b=1$ as this setting is widely adopted, for example in \texttt{scikit-learn}.}
For \texttt{SVRG}, we set the inner loop length to $\lfloor N/b \rfloor$.

\begin{figure}[t!]
	\centering
	\includegraphics[width=0.9\linewidth,trim=0ex 0ex 0ex 0ex, clip]{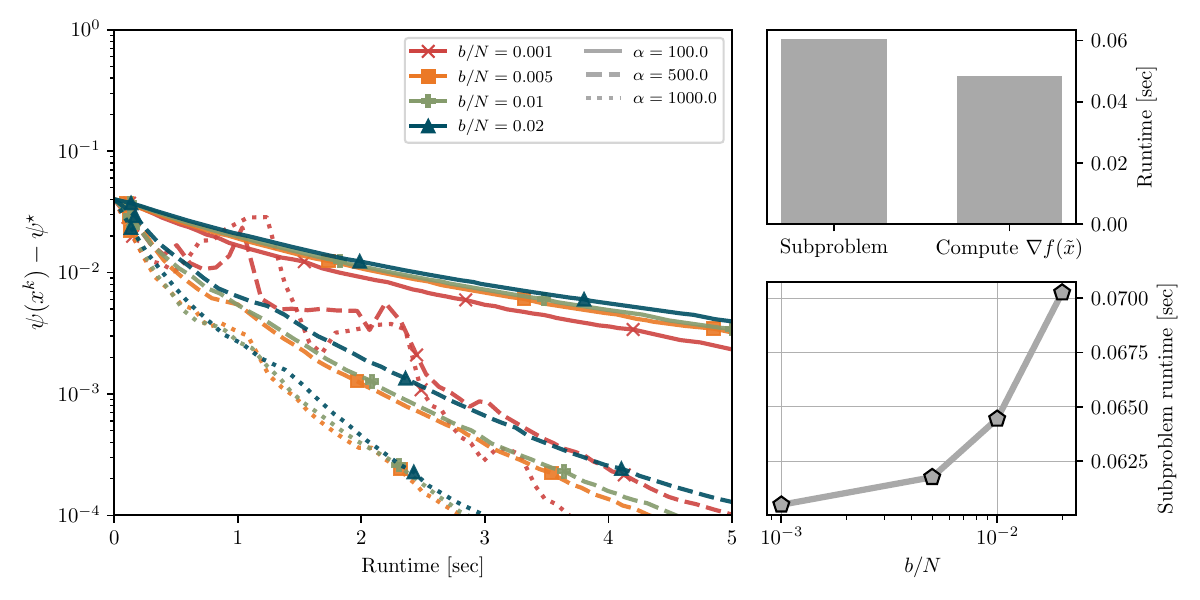}
	\caption{Sparse logistic regression for \texttt{news20}. Left: Objective function gap for different batch and step sizes. Top Right: average runtime for solving the subproblem once (with \cref{alg:semismooth_newton}) and for computing $\nabla f(\tilde x)$. Displayed for $\alpha=1000,b=0.005\cdot N$. Bottom Right: Mean runtime (per iteration index $k$ in \texttt{SNSPP}) for solving the subproblem.}
	\label{fig:batch_size}
\end{figure}

The tested algorithms terminate at iteration $k$, if the criterion 
\begin{align}
\label{eqn:stability_crit_convergence}
\psi(x^k)\leq 1.0001\psi^\star
\end{align} 
is satisfied. The runtime elapsed until fulfilling \eqref{eqn:stability_crit_convergence}, averaged over five independent runs, is plotted in \cref{fig:stability}.
The shaded area depicts the bandwidth of two standard deviations (over the five independent runs). If a method does not satisfy \eqref{eqn:stability_crit_convergence} within some maximum number of iterations or if it diverges, it is marked as \textit{no convergence}.

\paragraph{Discussion}
First, we observe that for all instances, \texttt{SNSPP} converges for much larger step sizes than \texttt{SAGA} and \texttt{SVRG}. The  elapsed runtime until convergence of \texttt{SNSPP} is robust to step size selection across all datasets. For \texttt{mnist}, \texttt{covtype} and \texttt{higgs}, the robustness of \texttt{SNSPP} is comparable to \texttt{SAGA} and slightly better than \texttt{SVRG}.
For \texttt{gisette}, \texttt{sido0}, and \texttt{madelon.2} (all of which are datasets where $n$ is large(r)) the advantage of \texttt{SNSPP} is most pronounced: for \texttt{madelon.2}, the runtimes of the best parameter settings are: \texttt{SNSPP}: 73 sec, \texttt{SAGA}: 132 sec, \texttt{SVRG}: 413 sec. \texttt{SNSPP} further converges in less than $300$ seconds for a large range of step sizes (i.e., without extensive tuning) while for \texttt{SAGA} and \texttt{SVRG} the runtime steeply increases beyond $500$ seconds if the step size is chosen too small (see \cref{fig:madelon} for a convergence plot).
Our results underpin the numerical evidence in \cite{Asi2019, Davis2019} that implicit stochastic proximal point methods tend to be more robust with respect to step size choices than stochastic gradient descent-type approaches.

\begin{figure}[h!]
	\begin{subfigure}[h!]{0.49\textwidth}
		\centering
		\includegraphics[width=\linewidth,trim=3ex 0ex 6ex 2.5ex, clip]{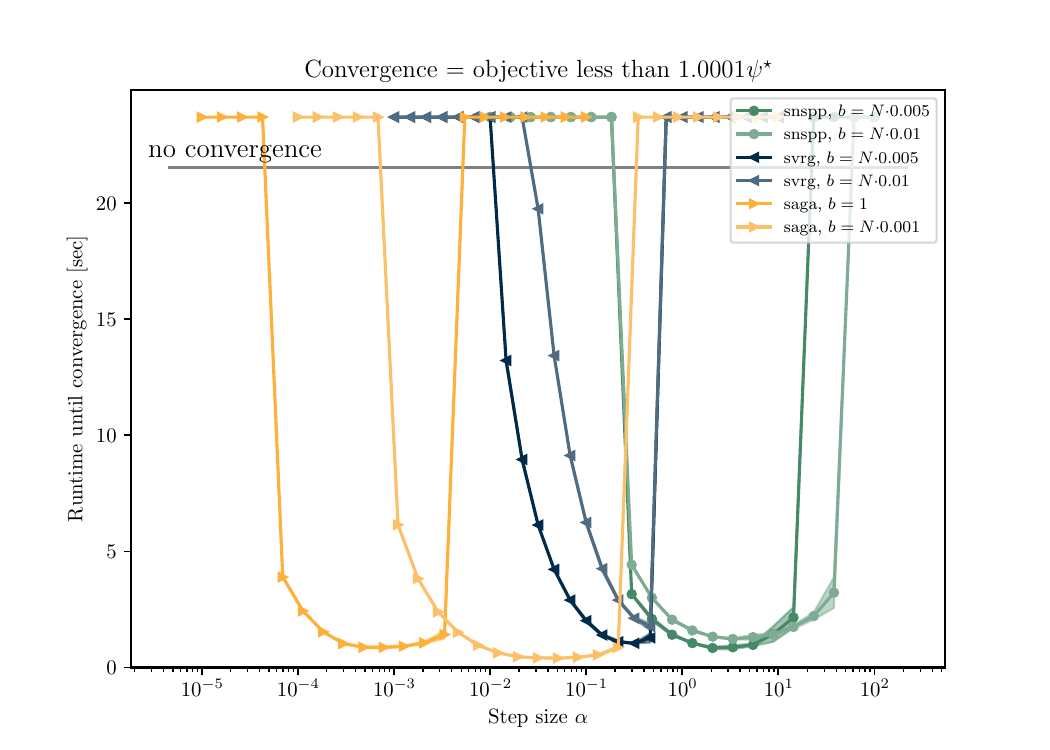}
		\caption{\texttt{mnist}}
	\end{subfigure}
	\begin{subfigure}[h!]{0.49\textwidth}
		\centering
		\includegraphics[width= \linewidth,trim=3ex 0ex 6ex 2.5ex, clip]{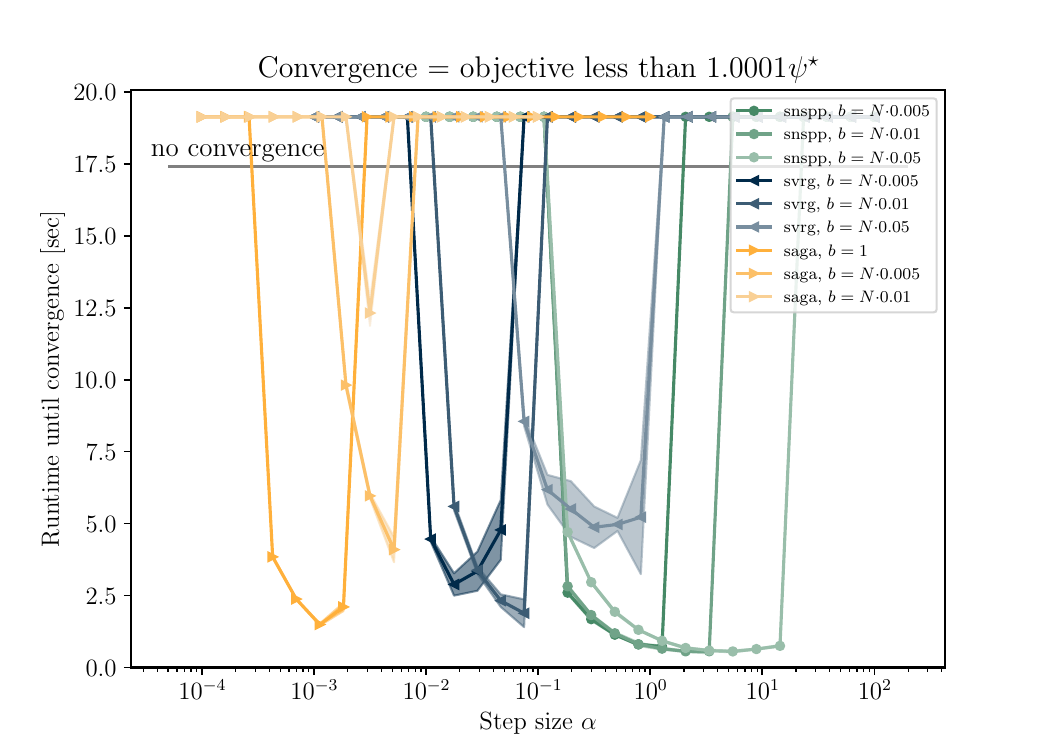}
		\caption{\texttt{gisette}}
	\end{subfigure}
	\begin{subfigure}[h!]{0.49\textwidth}
		\centering
		\includegraphics[width= \linewidth,trim=3ex 0ex 6ex 2.5ex, clip]{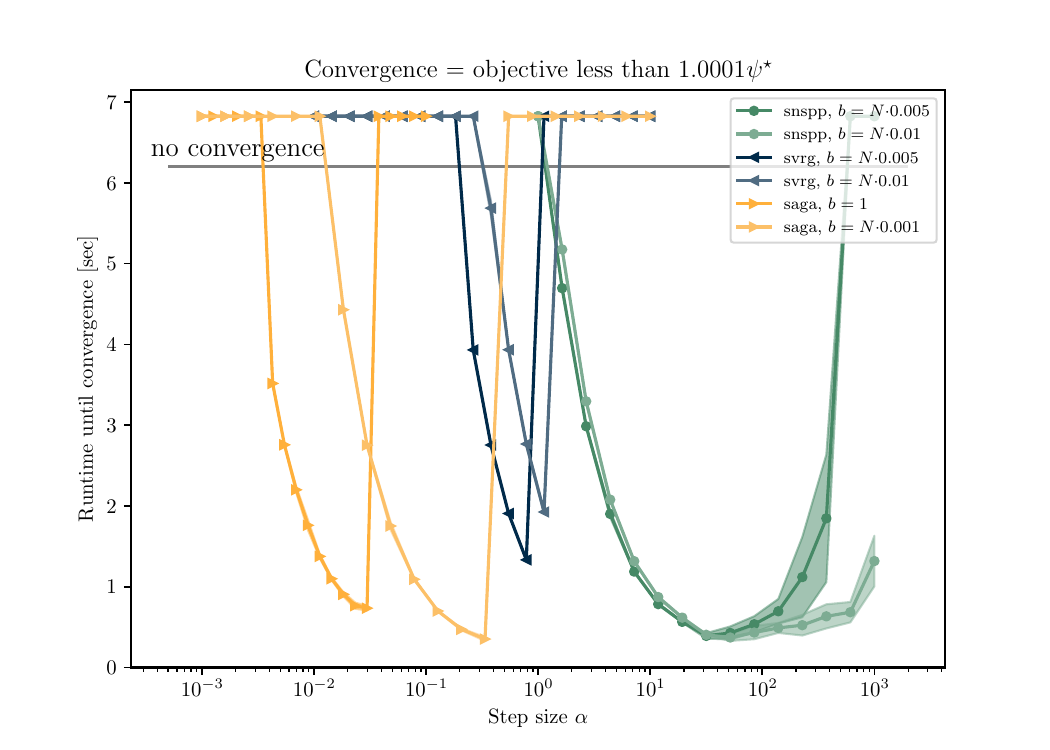}
		\caption{\texttt{sido0}}
	\end{subfigure}
	\begin{subfigure}[h!]{0.49\textwidth}
		\centering
		\includegraphics[width= \linewidth,trim=3ex 0ex 6ex 2.5ex, clip]{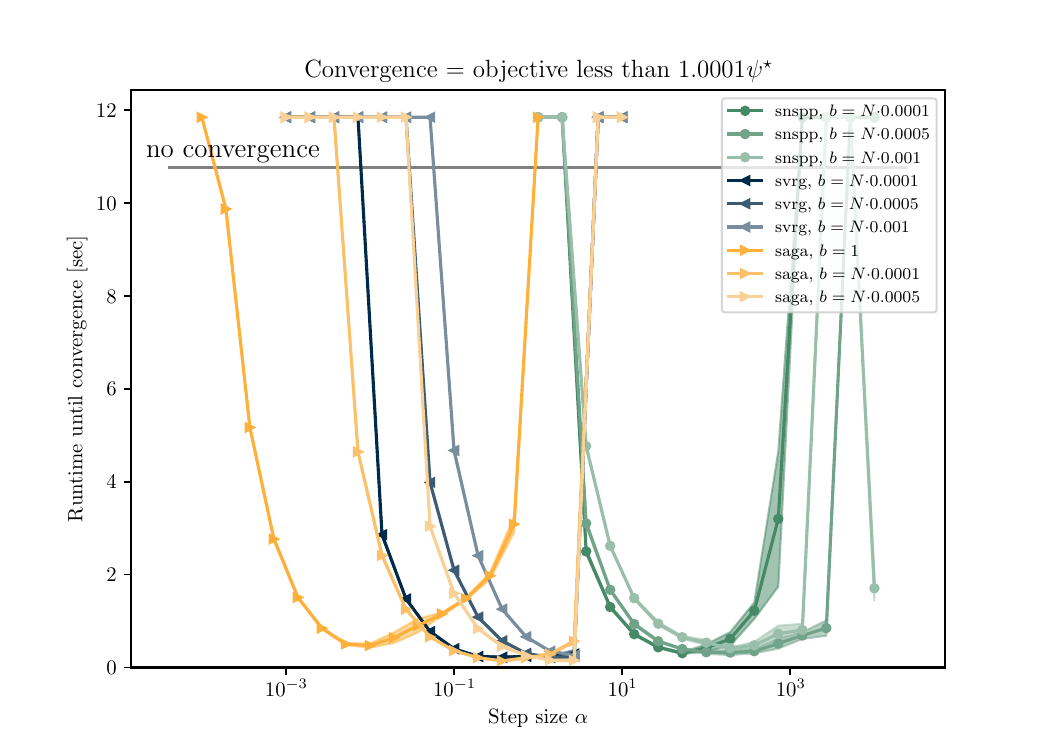}
		\caption{\texttt{covtype}}
	\end{subfigure}
	\begin{subfigure}[h!]{0.49\textwidth}
		\centering
		\includegraphics[width= \linewidth,trim=3ex 0ex 6ex 2.5ex, clip]{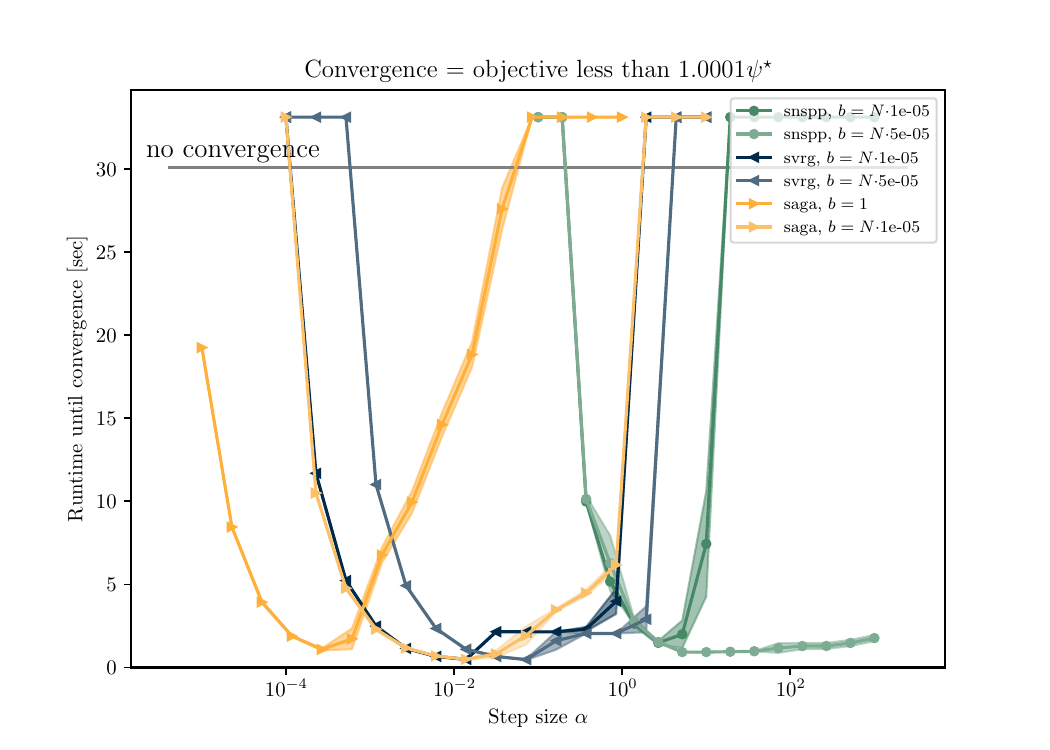}
		\caption{\texttt{higgs}}
	\end{subfigure}
	\begin{subfigure}[h!]{0.49\textwidth}
		\centering
		\includegraphics[width= \linewidth,trim=3ex 0ex 6ex 2.5ex, clip]{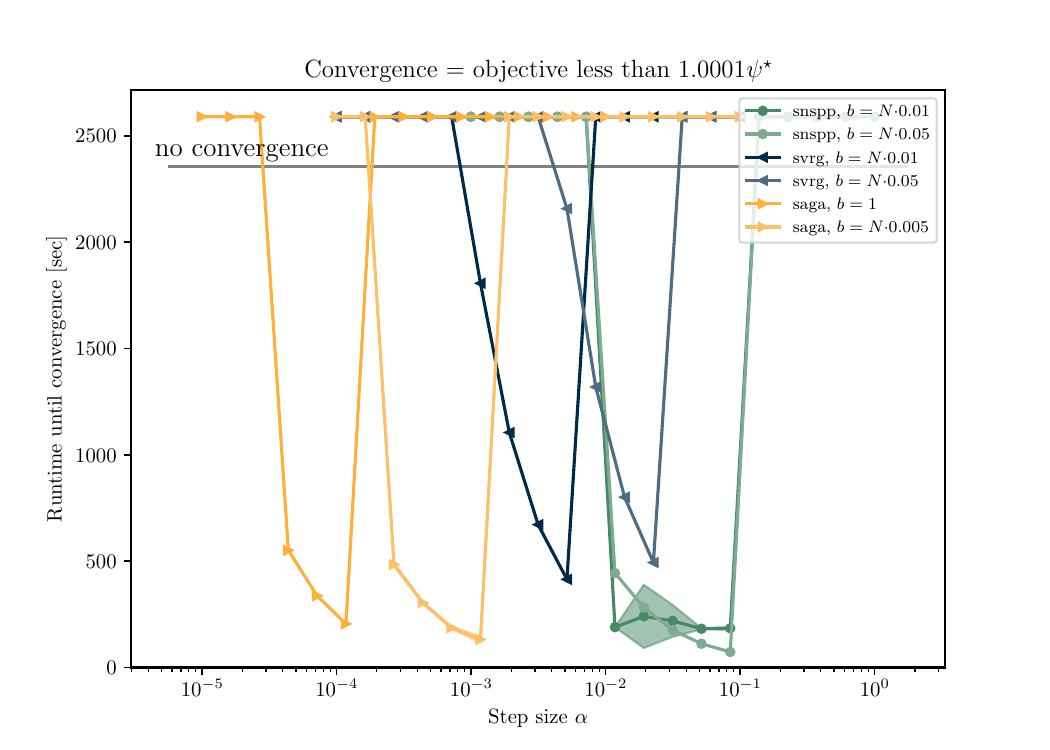}
		\caption{\texttt{madelon.2}}
	\end{subfigure}
	\caption{Runtime until convergence for different choices of step and batch sizes.}
	\label{fig:stability}
\end{figure}
\subsubsection{Speed of Convergence} \label{sec:logreg_comparison}
Based on the stability results depicted in \cref{fig:stability}, we now illustrate the speed of convergence of \texttt{SNSPP} compared to \texttt{SAGA}, \texttt{SVRG} and \texttt{AdaGrad}. For the experiments in this section, we choose a manually tuned (constant) batch and step size for all methods in order to allow a fair comparison. 
Details on the tuning procedure and the specific batch and step sizes values are reported in \cref{table2:hyperparams} in \cref{sec:table-hyperparams}.

We plot the objective function value -- averaged over 20 independent runs -- over the (average) cumulative runtime in \cref{fig:logreg_obj}. Due to the incorporated variance reduction, \texttt{SNSPP} converges to the optimal value and requires a relatively low number of iterations (compared to the other methods) in order to reach a high accuracy solution. However, in each iteration we need to run \cref{alg:semismooth_newton} instead of having a closed-form update. Overall in terms of runtime, for \texttt{mnist} and \texttt{covtype}, \texttt{SNSPP} is slightly slower than \texttt{SAGA/SVRG} but still competitive. For \texttt{sido0} and \texttt{gisette}, \texttt{SNSPP} is the fastest method. We also plot the convergence in terms of gradient evaluations, ignoring all other computational costs, in \cref{fig:logreg_obj2}.
\begin{figure*}[t]
\centering
\begin{subfigure}[h!]{0.475\textwidth}
\centering
\includegraphics[width=\textwidth]{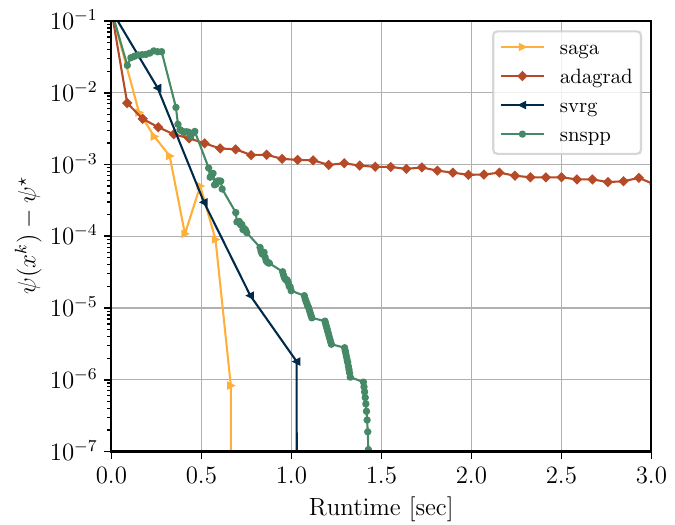}
\caption{\texttt{mnist}}
\label{fig:obj_mnist}
\end{subfigure}
\hfill
\begin{subfigure}[h!]{0.475\textwidth}  
\centering 
\includegraphics[width=\textwidth]{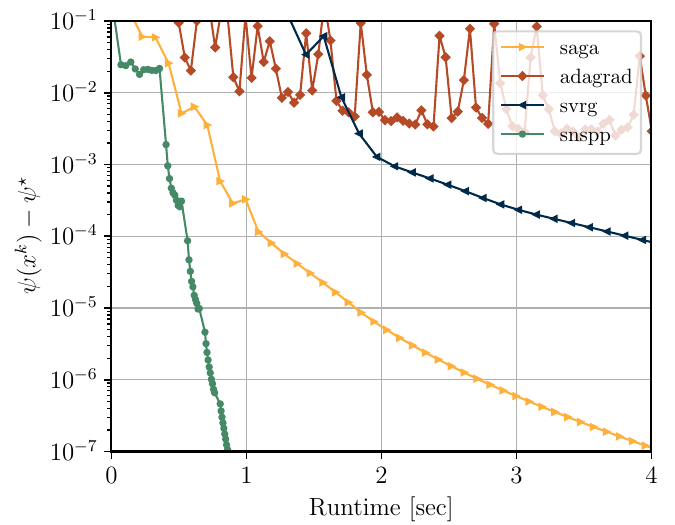}   
\caption{\texttt{gisette}}
\label{fig:obj_gisette}
\end{subfigure}
\begin{subfigure}[h!]{0.475\textwidth}   
\centering 
\includegraphics[width=\textwidth]{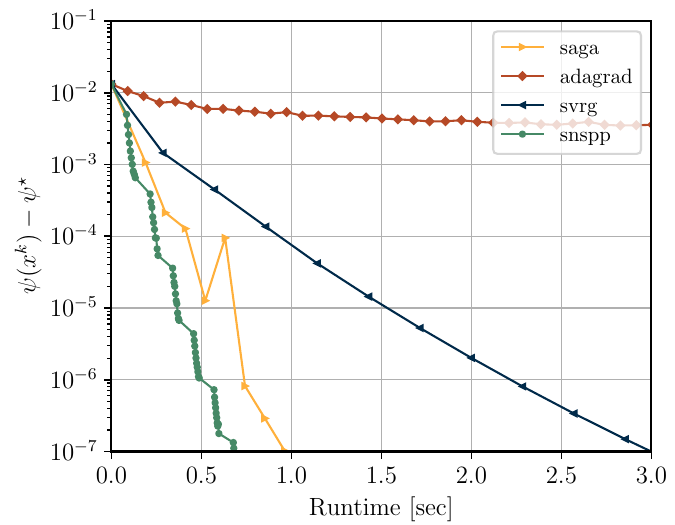}
\caption{\texttt{sido0}}
\label{fig:obj_sido1}
\end{subfigure}
\hfill
\begin{subfigure}[h!]{0.475\textwidth}   
\centering 
\includegraphics[width=\textwidth]{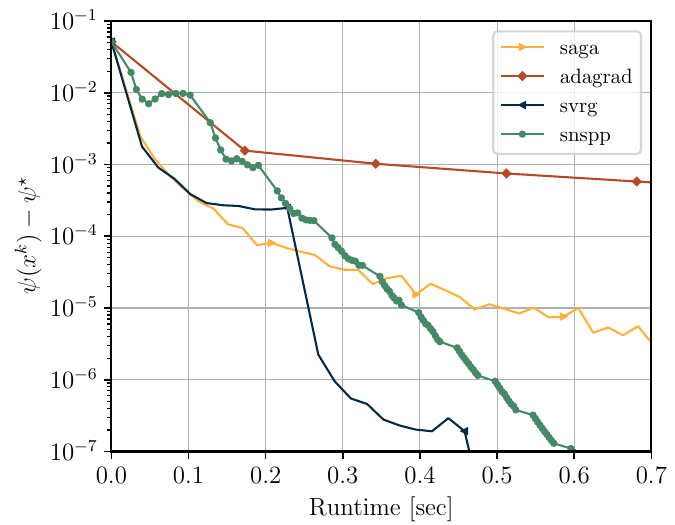}
\caption{\texttt{covtype}}
\label{fig:obj_covtype}
\end{subfigure}
\caption{Objective function convergence for the logistic regression datasets. 
For \texttt{SAGA} and \texttt{AdaGrad}, one marker denotes one epoch. For \texttt{SVRG} one marker denotes one outer-loop iteration while for \texttt{SNSPP} it denotes one (inner-loop) iteration.
}
\label{fig:logreg_obj}
\end{figure*}
%
\subsection{Sparse Student-t Regression}
\label{sec:experiments_student}
Next, for a given matrix $A \in \R^{N\times n}$ (with rows $a_i$) and measurements $b = (b_1,\dots, b_N) \in \R^N$, we consider sparse regression problems of the form  
\begin{align}
\label{prob:linear_regression}
\min_x~\mathcal{L}(Ax-b) + \lambda \|x\|_1.
\end{align}
where $\lambda >0$ is a regularization parameter and $\mathcal{L}:\R^m \to \mathbb{R}$ is a loss function.\\
In statistical learning, problem \cref{prob:linear_regression} with the squared loss $\mathcal{L}(r)=\oneover{N}\|r\|^2$ is known as the Lasso \cite{Tibshirani1996}. 
Other regularization terms have been proposed in order to model group sparsity or ordered features \cite{Simon2013}. 
While the squared loss is suitable when $b$ is contaminated by Gaussian noise, more heavy-tailed distributions have been studied in the presence of large outliers \cite{Huber1981}. For instance, using the Student-t distribution \cite{Aravkin2012,Lange1989} and the respective maximum-likelihood loss, problem \cref{prob:linear_regression} becomes
\begin{align}
\label{eqn:tstudent_problem}
\min_x~\oneover{N}\sum_{i=1}^{N} \ln \left(1+\hat\nu^{-1}{(\langle a_i,x\rangle-b_i)^2}\right) + \lambda \|x\|_1.
\end{align}
where $\hat\nu >0$ is the degrees of freedom-parameter of the Student-t distribution. Problem \cref{eqn:tstudent_problem} is of the form \cref{prob:deterministic} with $f_i(z) = \ln(1+\hat\nu^{-1}{(z-b_i)^2})$. 

We now fix $b \in\mathbb{R}$ and consider the scalar function $f_\text{std}: \R \to \R$, $f_\text{std}(x):= \ln(1+\hat\nu^{-1}{(x-b)^2})$; we have 
$f_\text{std}^\prime(x) = \frac{2(x-b)}{\hat\nu + (b - x)^{2}}$ and $f_\text{std}^{\prime\prime}(x) = \frac{2(\hat\nu - (b - x)^{2})}{(\hat\nu + (b - x)^{2})^{2}}$.
The minimum of $f_\text{std}^{\prime\prime}$ is attained at $x\in\{b+\sqrt{3\hat\nu}, b-\sqrt{3\hat\nu}\}$ and we can conclude $\inf_x\,f_\text{std}^{\prime\prime}(x) = -\frac{1}{4\hat\nu}$. Consequently, $f_\text{std}$ is $\frac{1}{4\hat\nu}$-weakly convex. 
Next, we compute the convex conjugate of $x\mapsto \hat f_{\text{std}}(x) := f_\text{std}(x) + \frac{\gamma}{2} x^2$ which is strongly convex for $\gamma > \frac{1}{4\hat\nu}$. For fixed $x \in \mathbb{R}$, it holds that
\begin{align}
\begin{split}
& \;z = \arg {\sup}_y~x y - \hat{f}_\text{std}(y) \iff \;x -f_\text{std}'(z) -\gamma z = 0 \\
\iff &\; -\gamma z^3+ z^2\left(x+2\gamma b\right) + z\left(-2bx-2-\gamma\hat\nu-\gamma b^2\right) +\left(x\hat\nu+xb^2+2b\right) = 0. \label{eqn:foc_fstar}
\end{split}
\end{align}
Choosing $\gamma > \frac{1}{4\hat\nu}$, \cref{eqn:foc_fstar} has a unique real solution $z^\ast$ for any $x,b \in \mathbb{R}$ due to strong convexity. Applying \cref{lem:conjugate_deriv} yields
\begin{align*}
\hat{f}_\text{std}^\ast(x) &= x z^\ast - \hat{f}_\text{std}(z^\ast), \quad (\hat{f}_\text{std}^\ast)^\prime(x) = z^\ast, \quad
(\hat{f}_\text{std}^\ast)^{\prime\prime}(x) = (\hat{f}_\text{std}^{\prime\prime}(z^\ast))^{-1}. 
\end{align*}
We solve the cubic polynomial equation in \cref{eqn:foc_fstar} using Halley's method \cite{Deiters2014}.
We run two different settings:
First, we use a synthetic dataset with $n=5000$, $N=4000$, $N_{\text{test}}=400$, $\lambda=0.001$, and $\hat{\nu}\in\{0.5,1,2\}$. We generate $\hat{x}\in\R^n$ with $20$ non-zero entries. To obtain $A$ and $b$, we first perform a SVD of a $(N+N_{\text{test}})\times n$-matrix with entries drawn uniformly at random from $[-1,1]$ and rescale its non-zero singular values to lie in the interval $[1,15]$. We use $\tilde A$ to denote the resulting larger matrix and we compute $\tilde b$ via $\tilde b = \tilde  A\hat{x} + 0.1\cdot\bar{\varepsilon}$ where $\bar{\varepsilon}\in \R^{N+N_{\text{test}}}$ is generated from a Student-t distribution with degrees of freedom $\hat\nu$. $A$ and $b$ are then given as the first $N$ rows/entries of $\tilde A$ and $\tilde b$. The remaining rows/entries are used as a test set.
Secondly, we consider problem \cref{eqn:tstudent_problem} using the feature matrix $A$ from the \texttt{sido0} dataset. We generate $\hat{x}$ with $50$ non-zero entries and compute $b=A\hat{x} + 0.1\cdot\bar{\varepsilon}$ where $\bar{\varepsilon}\in\R^N$ is generated from a Student-t distribution with degrees of freedom $\hat{\nu}=2$. As in the previous test, 20\% of the samples are used as test set (applying the same procedure) and we set $\lambda=0.01$.
We follow the same tuning strategy as described in \cref{sec:logreg_comparison}. The objective function and test loss are averaged over 20 independent runs. 

\paragraph{Discussion} For the synthetic data (\cref{fig:tstudent}), we observe that \texttt{SNSPP} performs comparably to \texttt{SVRG} and \texttt{SAGA} in reducing the objective as well as the Student-t likelihood loss on a held-out test set.
In order to exclude the possibility that the methods converge to different points with similar objective function values, we verified that the iterates of all methods follow a similar path and that the final iterates stay very close in terms of Euclidean distance. (Only the iterates generated by \texttt{AdaGrad} show a more oscillatory behavior which is expected as it does not use variance reduction).
\cref{fig:tstudent_sido} shows the results for the regression on \texttt{sido0}: here, \texttt{SNSPP} performs favorably compared to the other methods -- both in terms of objective function and test loss.   
\begin{figure}[t]
\centering
\begin{subfigure}[b]{0.32\textwidth}
\centering
\includegraphics[width=\linewidth]{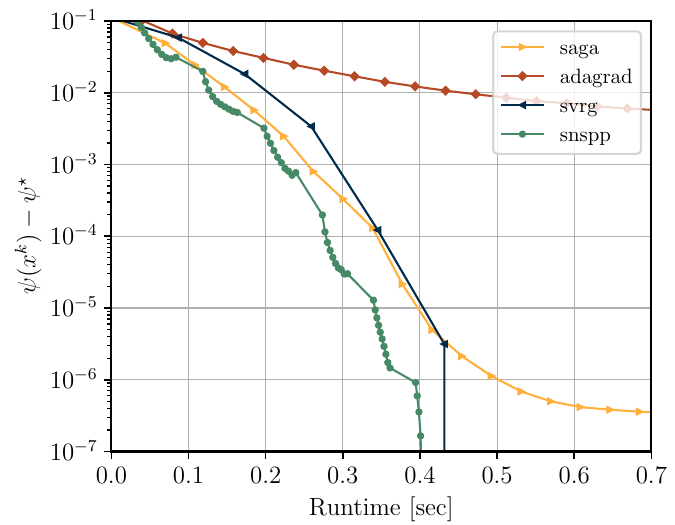}
\includegraphics[width= \linewidth]{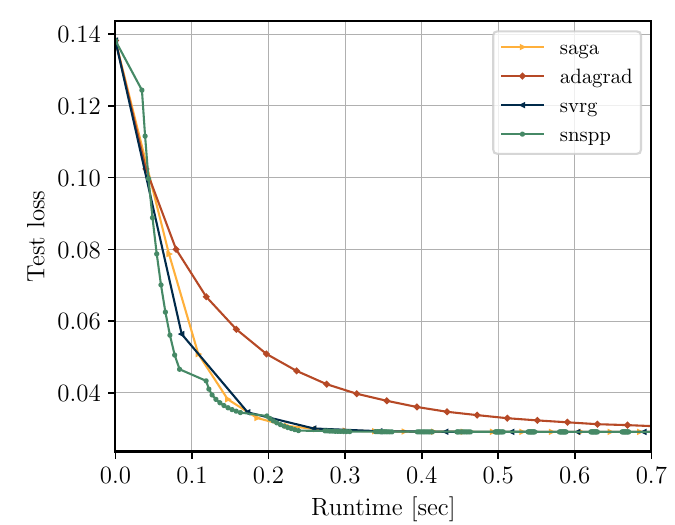}
\caption{$\hat{\nu} = 2$}
\label{fig:tstudent-1}
\end{subfigure}
\begin{subfigure}[b]{0.32\textwidth}
	\centering
	\includegraphics[width=\linewidth]{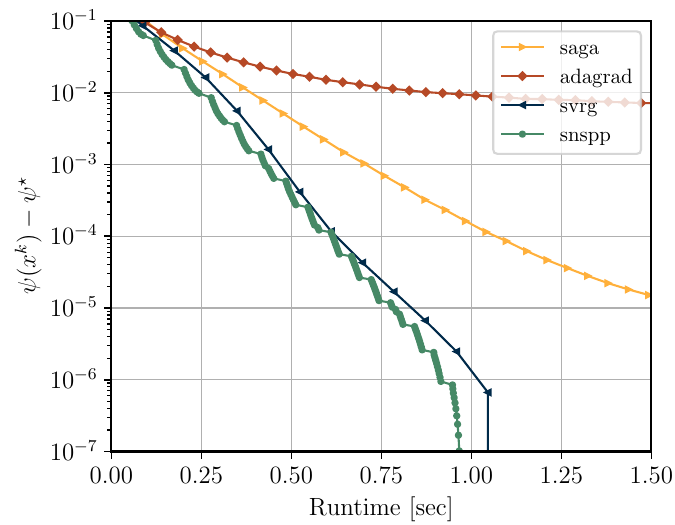}
	\includegraphics[width= \linewidth]{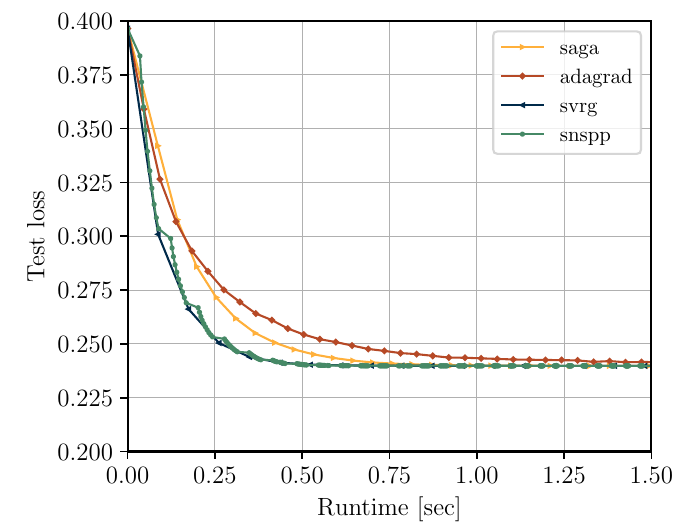}
	\caption{$\hat{\nu} = 1$}
	\label{fig:tstudent-2}
\end{subfigure}
\begin{subfigure}[b]{0.32\textwidth}
	\centering
	\includegraphics[width=\linewidth]{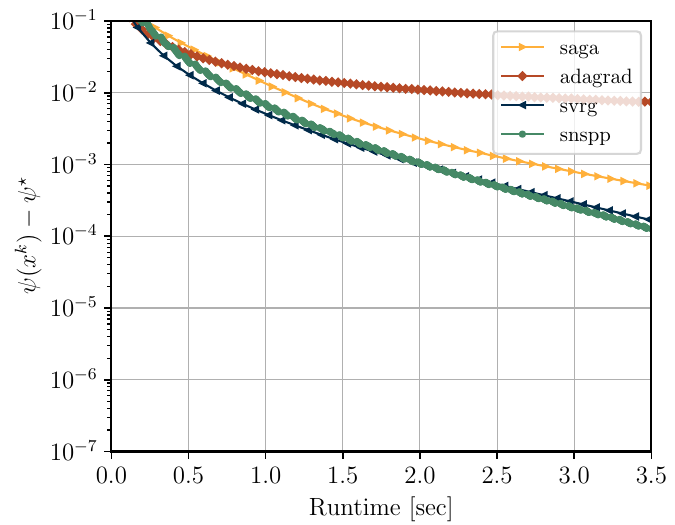}
	\includegraphics[width= \linewidth]{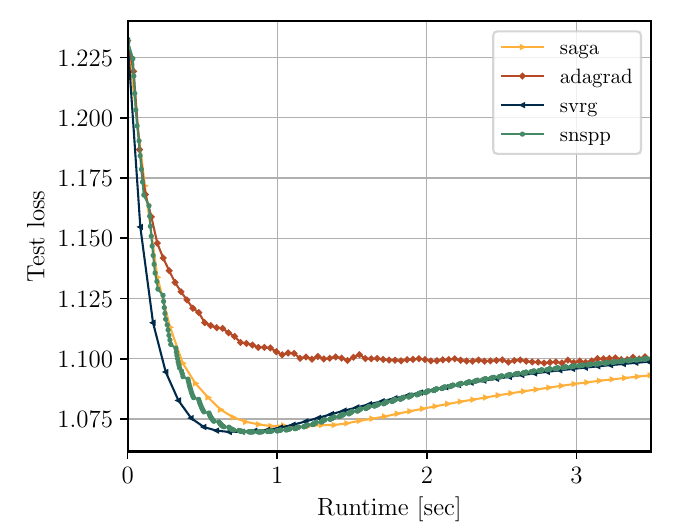}
	\caption{$\hat{\nu} = 0.5$}
	\label{fig:tstudent-4}
\end{subfigure}
\caption{Objective function (top) and test loss (bottom) for Student-t regression with different values of degrees of freedom. Test loss is defined as the value of $f_\text{std}$ averaged over the samples of the test set.}
\label{fig:tstudent}
\end{figure}
\begin{figure}[t]
	\centering
	\begin{subfigure}[h!]{0.475\textwidth}
		\centering
		\includegraphics[width=\textwidth]{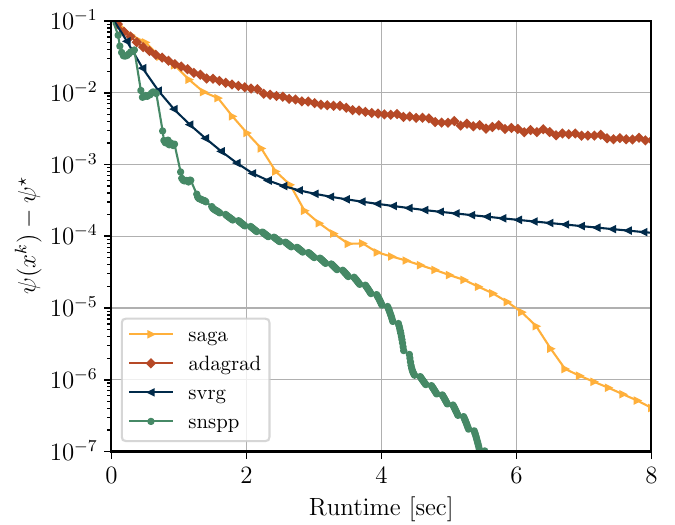}
		\caption{Objective function}
		\label{fig:tstudent_sido_obj}
	\end{subfigure}
	\hfill
	\begin{subfigure}[h!]{0.475\textwidth}  
		\centering 
		\includegraphics[width=\textwidth]{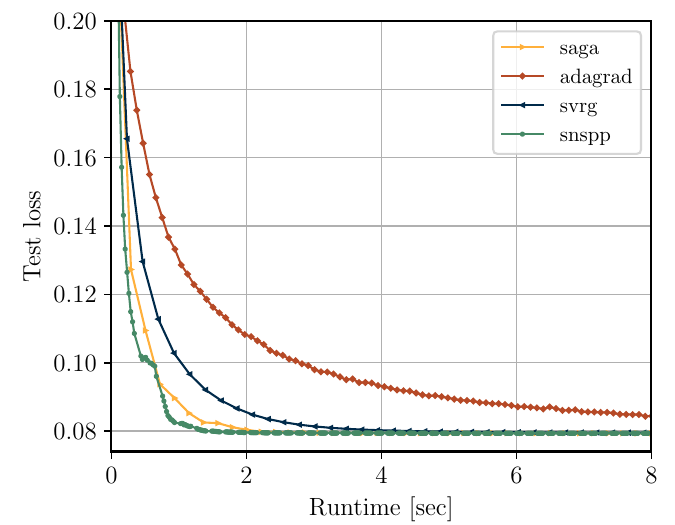}   
		\caption{Test loss}
		\label{fig:tstudent_sido_error}
	\end{subfigure}
\caption{Sparse Student-t regression for \texttt{sido0} dataset. Test loss is defined as the value of $f_\text{std}$ averaged over the samples of the test set.}
\label{fig:tstudent_sido}
\end{figure}
\section{Conclusion}
We develop a semismooth Newton stochastic proximal point method (\texttt{SNSPP}) for composite optimization that is based on the stochastic proximal point algorithm, the semismooth Newton method, and progressive variance reduction. The novel combination of stochastic techniques and of the semismooth Newton method to solve the occurring subproblems results in an effective stochastic proximal point scheme that is suitable for classes of weakly convex, nonsmooth, and large-scale problems. Convergence guarantees have been established that reflect similar theoretical results for \texttt{SVRG}. The proposed algorithm achieves promising numerical results and -- compared to other variance reduced gradient methods -- its performance is less sensitive with respect to tuning of the step size. This can be advantageous in practice when tuning budgets have to be considered.
\appendix
\section{Auxiliary Results}
\subsection{Preparatory Lemma}

\begin{lemma} \label{lem:conjugate_deriv}
Let $h:\R^n \to \R$ be a strongly convex, twice continuously differentiable mapping and let $z^\ast(x)$ denote the (unique) solution to $\max_z \langle x,z\rangle -h(z)$. Then, the convex conjugate $h^\ast: \R^n \to \R$ is $\mathcal{C}^2$ and for all $x\in \R^n$ it holds that
\begin{align*}
h^\ast(x) = \langle x, z^\ast(x)\rangle - h(z^\ast(x)), \quad \nabla h^\ast (x) = z^\ast(x), \quad \nabla^2 h^\ast (x) = \left[\nabla^2h(z^\ast(x))\right]^{-1}.
\end{align*}
\end{lemma}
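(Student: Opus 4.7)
\textbf{Proof plan for \cref{lem:conjugate_deriv}.}
The plan is to reduce everything to the identity $z^\ast(x) = (\nabla h)^{-1}(x)$ and then apply the inverse function theorem. First I would invoke strong convexity of $h$ (together with twice continuous differentiability) to conclude that for every $x \in \R^n$ the inner maximization problem $\max_z\,\langle x,z\rangle - h(z)$ is strongly concave in $z$ and thus admits a unique maximizer $z^\ast(x)$. The first-order optimality condition is $\nabla h(z^\ast(x)) = x$, which shows $z^\ast = (\nabla h)^{-1}$ on $\R^n$; the strong convexity also ensures that $\nabla^2 h(z)$ is uniformly positive definite, so $\nabla h$ is a $C^1$-diffeomorphism of $\R^n$ onto itself by the global inverse function theorem. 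The first claim $h^\ast(x) = \langle x,z^\ast(x)\rangle - h(z^\ast(x))$ is then immediate from the definition of the conjugate.

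For the gradient formula, I would differentiate the expression $h^\ast(x) = \langle x,z^\ast(x)\rangle - h(z^\ast(x))$ using the chain rule. This yields
\begin{equation*}
\nabla h^\ast(x) = z^\ast(x) + [Jz^\ast(x)]^\top\bigl(x - \nabla h(z^\ast(x))\bigr),
\end{equation*}
and the bracketed term vanishes by the optimality condition, leaving $\nabla h^\ast(x) = z^\ast(x)$. Alternatively, this is a direct instance of Danskin's theorem applied to the parametric maximum. Either route requires only the $C^1$-regularity of $z^\ast$, which is already guaranteed by the inverse function theorem.

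For the Hessian, I would differentiate the identity $\nabla h(z^\ast(x)) = x$ to obtain $\nabla^2 h(z^\ast(x))\, J z^\ast(x) = I$, whence $J z^\ast(x) = [\nabla^2 h(z^\ast(x))]^{-1}$. Combined with $\nabla h^\ast(x) = z^\ast(x)$, this gives $\nabla^2 h^\ast(x) = [\nabla^2 h(z^\ast(x))]^{-1}$. Since $\nabla^2 h$ is continuous and everywhere invertible, so is its inverse, and composition with the continuous map $z^\ast$ shows $h^\ast \in \mathcal{C}^2$.

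I do not foresee a genuine obstacle here; the only subtlety is making sure that strong convexity of $h$ together with $h:\R^n\to\R$ (finite everywhere) yields that $\nabla h$ is surjective, so that $z^\ast(x)$ is well defined for every $x\in\R^n$. This follows from coercivity of $z\mapsto h(z) - \langle x,z\rangle$ implied by strong convexity, ensuring that the maximization problem attains its supremum on all of $\R^n$ and that $h^\ast$ has full domain.
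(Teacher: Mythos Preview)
Your proposal is correct and follows essentially the same route as the paper: both identify $z^\ast(x)$ via the optimality condition $\nabla h(z^\ast(x))=x$, obtain $\nabla h^\ast(x)=z^\ast(x)$, and then apply the inverse function theorem to $\nabla h$ to get $\nabla^2 h^\ast(x)=[\nabla^2 h(z^\ast(x))]^{-1}$. The only cosmetic difference is that the paper cites a standard convex-analysis result (Beck, Thm.~4.20) for the gradient identity, whereas you derive it by direct differentiation of $h^\ast(x)=\langle x,z^\ast(x)\rangle-h(z^\ast(x))$; your version is slightly more self-contained and your remark on surjectivity of $\nabla h$ via coercivity is a useful addition that the paper leaves implicit.
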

\begin{proof}
As $h$ is strongly convex and $\mathcal{C}^2$, $z^\ast(x)$ is the unique solution to $\nabla h(z)=x$. By \cite[Thm.\ 4.20]{Beck2017}, we have $\nabla h(y) = x$ if, and only if, $\nabla h^\ast (x) = y$ for all $x,y \in \R^n$, which implies $\nabla h^\ast (x) = z^\ast(x)$. The inverse function theorem yields that $x\mapsto z^\ast(x)$ is $\mathcal{C}^1$ with Jacobian $Dz^\ast(x) = [\nabla^2 h(z^\ast(x))]^{-1}$; as $Dz^\ast(x)=\nabla^2 h^\ast(x)$ the statement is proven. 
\end{proof}

\subsection{Bounding the Variance}
In this section, let $\mathcal{F}$ be a $\sigma$-algebra and suppose that $x$ and $\tilde x$ are $\mathcal{F}$-measurable random variables in $\R^n$. For $i\in[N]$, let us further define $\zeta_i:= \trp{A_i} (\nabla f_{i}(A_i x) - \nabla f_{i}(A_i \tilde{x}))$. 
\begin{lemma}
\label{lem:bound_zeta_i}
Suppose that condition \ref{A1} is satisfied and let the index $i$ be drawn uniformly from $[N]$ and independently of $\mathcal{F}$. Conditioned on $\mathcal{F}$, we then have $\E\|\zeta_i\|^2 \leq {\mL}^2\|x-\tilde{x}\|^2$ almost surely. 
In addition, if every $f_i$ is convex and there exists $x^\star \in \argmin_{x}\psi(x)$, then it holds
\begin{align*}
\E\|\zeta_i\|^2 \leq 4{\mL}(\psi(x)-\psi(x^\star)+\psi(\tilde x)- \psi(x^\star)) \quad \text{almost surely}.
\end{align*}
\end{lemma}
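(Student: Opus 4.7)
The plan is to prove both bounds by reformulating everything through the composite functions $g_i(x) := f_i(A_i x)$. A direct computation yields $\nabla g_i(x) = A_i^\top \nabla f_i(A_i x)$, so $\zeta_i = \nabla g_i(x) - \nabla g_i(\tilde x)$; moreover, under \ref{A1}, each $\nabla g_i$ is Lipschitz with constant $L_i\|A_i\|^2$, and under the convexity assumption in the second part, each $g_i$ is convex.

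For the first bound, I would just chain operator-norm and Lipschitz estimates: for every (deterministic) $i \in [N]$,
\begin{align*}
\|\zeta_i\| \leq \|A_i\|\,\|\nabla f_i(A_i x) - \nabla f_i(A_i \tilde x)\| \leq L_i\|A_i\|^2 \|x-\tilde x\| \leq \mL\,\|x-\tilde x\|.
\end{align*}
Since $x, \tilde x$ are $\mathcal F$-measurable and $i$ is independent of $\mathcal F$, squaring and taking the conditional expectation preserves the bound, giving $\E[\|\zeta_i\|^2 \mid \mathcal F] \leq \mL^2\|x-\tilde x\|^2$ a.s., regardless of the distribution of $i$.

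For the second bound, the central ingredient is the standard co-coercivity-type inequality for convex $L$-smooth functions: for any convex, $L$-smooth $g$, the shifted function $y \mapsto g(y) - g(z) - \langle \nabla g(z), y - z\rangle$ is convex, $L$-smooth, and nonnegative with minimum $0$ at $y=z$; applying the descent lemma to its minimizer yields
\begin{align*}
\|\nabla g(y) - \nabla g(z)\|^2 \leq 2L\bigl[g(y) - g(z) - \langle \nabla g(z), y-z\rangle\bigr].
\end{align*}
I would apply this to $g_i$ (with constant $L_i\|A_i\|^2 \leq \mL$) at $z = x^\star$, once with $y = x$ and once with $y = \tilde x$, then combine with the triangle bound $\|\zeta_i\|^2 \leq 2\|\nabla g_i(x) - \nabla g_i(x^\star)\|^2 + 2\|\nabla g_i(\tilde x) - \nabla g_i(x^\star)\|^2$ to obtain
\begin{align*}
\|\zeta_i\|^2 \leq 4\mL\bigl[g_i(x) - g_i(x^\star) - \langle \nabla g_i(x^\star), x-x^\star\rangle + g_i(\tilde x) - g_i(x^\star) - \langle \nabla g_i(x^\star), \tilde x-x^\star\rangle\bigr].
\end{align*}

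The final step is to take expectation over the uniformly drawn index $i$, using $\E_i g_i = f$ and $\E_i \nabla g_i = \nabla f$ (this is where independence from $\mathcal F$ is used), and then convert the resulting $f$-Bregman gaps into $\psi$-gaps via the stationarity of $x^\star$: since $x^\star$ minimizes $\psi = f + \phi$, we have $-\nabla f(x^\star) \in \partial \phi(x^\star)$, so the convex subgradient inequality gives $-\langle \nabla f(x^\star), y - x^\star\rangle \leq \phi(y) - \phi(x^\star)$ for any $y$, and adding $f(y) - f(x^\star)$ yields $f(y) - f(x^\star) - \langle \nabla f(x^\star), y - x^\star\rangle \leq \psi(y) - \psi(x^\star)$. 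Inserting $y=x$ and $y=\tilde x$ completes the proof. The only subtle point, which I would make sure to highlight, is the use of the co-coercivity bound under the convexity hypothesis on each $f_i$ (which is only available in the second part of the lemma); the rest is bookkeeping.
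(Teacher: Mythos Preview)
Your proof is correct and follows essentially the same route as the paper: both introduce the composite functions $g_i = \varphi_i := f_i \circ A_i$, bound the first claim directly via Lipschitz smoothness, and for the second claim split $\zeta_i$ through $x^\star$ via $\|a+b\|^2 \leq 2\|a\|^2 + 2\|b\|^2$ and then invoke co-coercivity together with the stationarity condition $-\nabla f(x^\star) \in \partial\phi(x^\star)$. The only difference is cosmetic: the paper packages the co-coercivity/stationarity step as a citation to \cite[Lem.~3.4]{Xiao2014}, whereas you spell it out explicitly.
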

\begin{proof}
The first statement follows directly from Lipschitz-smoothness. To prove the second part, let us define $\varphi_i: x \mapsto f_i(A_i x)$. Lem.\ 3.4 in \cite{Xiao2014} (applied to $\varphi_i$) implies
\begin{align*}
\nonumber \E\|\nabla \varphi_i(x)- \nabla \varphi_i(\tilde{x})\|^2 &\leq 2 \E\|\nabla \varphi_{i}(x) - \nabla \varphi_{i}(x^\star)\|^2 + 2 \E\|\nabla \varphi_{i}(\tilde{x}) - \nabla \varphi_{i}(x^\star)\|^2 \\
&\leq 4{\mL} (\psi(x) - \psi(x^\star) + \psi(\tilde{x})-\psi(x^\star) ), 
\end{align*}
where we used $\|a+b\|^2 \leq 2\|a\|^2 + 2\|b\|^2$. We conclude as $\zeta_i = \nabla \varphi_i(x)- \nabla \varphi_i(\tilde{x})$.
\end{proof}
\begin{lemma}
\label{lem:variance_replacement}
Let $\mathcal{S}$ be a $b$-tuple drawn uniformly from $[N]$ and independently of $\mathcal{F}$. Conditioned on $\mathcal{F}$, the random variable $u := \nabla f_{\mathcal S}(x) - \nabla f_{\mathcal S}(\tilde x) + \nabla f(\tilde x)$ is an unbiased estimator of $\nabla f(x)$. Moreover, almost surely, if $\mathcal{S}$ is drawn 
\begin{enumerate}[label=\textup{\textrm{(\roman*)}},topsep=0pt,itemsep=0ex,partopsep=0ex]
\item with replacement, then $\E\|u-\nabla f(x)\|^2 \leq \oneover{b}\E\|\zeta_i\|^2$ for any $i\in\mathcal{S}$.
\item without replacement, then $\E\|u-\nabla f(x)\|^2 \leq (1-\tfrac{b}{N})\oneover{b(N-1)}\sum_{i=1}^{N}\E\|\zeta_i\|^2$.
\end{enumerate}
\end{lemma}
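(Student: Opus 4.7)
The plan is to rewrite the deviation $u-\nabla f(x)$ as the centered sample mean of the population $\{\zeta_1,\dots,\zeta_N\}$, and then apply the standard variance identities for uniform sampling with and without replacement.

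Since $x$ and $\tilde x$ are $\mathcal F$-measurable and $\mathcal S$ is independent of $\mathcal F$, conditioning on $\mathcal F$ freezes the vectors $\zeta_1,\dots,\zeta_N$ (so $\E\|\zeta_i\|^2=\|\zeta_i\|^2$ a.s.) while leaving the randomness in the indices $\ixmap(1),\dots,\ixmap(b)$. I would first observe
\[
u-\nabla f(x) \;=\; \bigl[\nabla f_{\mathcal S}(x)-\nabla f_{\mathcal S}(\tilde x)\bigr] - \bigl[\nabla f(x)-\nabla f(\tilde x)\bigr] \;=\; \frac1b\sum_{j=1}^b \zeta_{\ixmap(j)} - \bar\zeta,
\]
where $\bar\zeta:=\frac1N\sum_{i=1}^N\zeta_i=\nabla f(x)-\nabla f(\tilde x)$. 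Unbiasedness is then immediate: admissibility of the uniform sampling procedure (\cref{def:admissible_sampling}) yields $\E[\frac1b\sum_{j=1}^b\zeta_{\ixmap(j)}\mid \mathcal F]=\bar\zeta$ a.s., hence $\E[u\mid\mathcal F]=\nabla f(x)$ a.s.

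For (i), with replacement the indices $\ixmap(1),\dots,\ixmap(b)$ are i.i.d.\ uniform on $[N]$, so the summands are i.i.d.\ and the variance of a sample mean satisfies
\[
\E\|u-\nabla f(x)\|^2 \;=\; \frac1b\bigl(\E\|\zeta_{\ixmap(1)}\|^2-\|\bar\zeta\|^2\bigr)\;\le\;\frac1b\,\E\|\zeta_{\ixmap(1)}\|^2,
\]
which is the claimed bound for any index in $\mathcal S$ (by identical distribution).

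For (ii), the key is the finite population correction. Expanding the squared norm conditionally on $\mathcal F$ gives
\[
\E\Bigl\|\tfrac1b{\sum}_{j=1}^b\zeta_{\ixmap(j)}-\bar\zeta\Bigr\|^2 \;=\; \frac1{b^2}\sum_{j=1}^b\E\|\zeta_{\ixmap(j)}-\bar\zeta\|^2 \;+\; \frac1{b^2}\sum_{j\ne\ell}\E\langle\zeta_{\ixmap(j)}-\bar\zeta,\zeta_{\ixmap(\ell)}-\bar\zeta\rangle.
\]
Uniformity without replacement gives $\Pr[\ixmap(j)=i]=1/N$ and $\Pr[\ixmap(j)=i,\ixmap(\ell)=i']=1/(N(N-1))$ for $i\ne i'$ and $j\ne\ell$; combining these with the elementary identity $\sum_{i\ne i'}\langle\zeta_i-\bar\zeta,\zeta_{i'}-\bar\zeta\rangle=-\sum_i\|\zeta_i-\bar\zeta\|^2$ collapses the cross terms and yields
\[
\E\|u-\nabla f(x)\|^2 \;=\; \frac{N-b}{b\,N(N-1)}\sum_{i=1}^N\|\zeta_i-\bar\zeta\|^2 \;\le\; \Bigl(1-\frac{b}{N}\Bigr)\frac{1}{b(N-1)}\sum_{i=1}^N\|\zeta_i\|^2,
\]
where the final inequality uses $\sum_i\|\zeta_i-\bar\zeta\|^2\le\sum_i\|\zeta_i\|^2$ (equivalently, $\sum_i\|\zeta_i-\bar\zeta\|^2=\sum_i\|\zeta_i\|^2-N\|\bar\zeta\|^2$).

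The main obstacle is the bookkeeping in (ii): one has to pair up the off-diagonal covariances correctly and use the fact that sampling without replacement induces a negative correlation of precisely the magnitude needed to produce the factor $(N-b)/(N-1)$. Once this finite-population variance identity is in hand, both statements follow by dropping the centering to replace $\|\zeta_i-\bar\zeta\|^2$ by $\|\zeta_i\|^2$.
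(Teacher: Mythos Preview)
Your proof is correct and follows essentially the same approach as the paper: rewrite $u-\nabla f(x)$ as the centered sample mean $\tfrac1b\sum_j\zeta_{\ixmap(j)}-\bar\zeta$, use the i.i.d.\ variance identity for (i), and the finite-population correction for (ii), then drop the centering. The only difference is that the paper simply cites \cite[\S2.8]{Lohr2010} for (ii), whereas you spell out the covariance computation explicitly; your derivation of the $(N-b)/(N-1)$ factor is correct and makes the argument self-contained.
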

\begin{proof}
Utilizing \cite[\S2.8]{Lohr2010}, we clearly have $\E[u]=\nabla f(x)$. 
For part (i), consider $\zeta = \oneover{b}\sum_{i \in \mathcal S}\zeta_i$.
It holds $\zeta = u - \nabla f(\tilde x)$ and, conditioned on $\mathcal{F}$, we obtain
\begin{align*}
\E\|u - \nabla f(x)\|^2  
= \frac{1}{b^2} \E \left\| {\sum}_{i \in \mathcal{S}} \zeta_i - \E[\zeta_i]\right\|^2
\end{align*}
since $\E[\zeta_i]=\nabla f(x) - \nabla f(\tilde{x})$ for all $i\in\mathcal{S}$. Since the random variables $\{\zeta_i - \E[\zeta_{i}]\}$ are i.i.d.\ and have
mean zero (conditioned on $ \mathcal{F}$), \cite[Lem.\ 7]{J.Reddi2016} allows to conclude
\begin{align}
\label{eqn:bound_variance_zeta}
\E\|u - \nabla f(x)\|^2  = \frac{1}{b^2}{\sum}_{i \in \mathcal{S}}\,\E \|\zeta_i - \E[\zeta_{i}]\|^2 \leq \frac{1}{b^2}{\sum}_{i \in \mathcal{S}}\,\E \|\zeta_i\|^2.
\end{align}
This proves the first statement as the random variables $\zeta_i$ are identically distributed.
The formula in (ii) is shown in \cite[\S2.8]{Lohr2010}.
\end{proof}
Combining \cref{lem:bound_zeta_i} and \cref{lem:variance_replacement}, we obtain the following result which extends Lem.\ 3 in \cite{J.Reddi2016} and Cor.\ 3.5 in \cite{Xiao2014}.
\begin{corollary} \label{cor:c1} Let \ref{A1} hold and let $\mathcal{S}$ be a $b$-tuple drawn uniformly from $[N]$ and independently of $\mathcal{F}$. With $u$ as in \cref{lem:variance_replacement} and conditioned on $\mathcal{F}$, it holds that
\begin{enumerate}[label=\textup{\textrm{(\roman*)}},topsep=0pt,itemsep=0ex,partopsep=0ex]
\item $\E\|u - \nabla f(x)\|^2 \leq  \frac{{\mL}^2\tau}{b}\|x-\tilde{x}\|^2$;
\item if all $f_i$ are convex and $x^\star \in \argmin_{x}\psi(x)$ exists, then
\[\E\|u - \nabla f(x)\|^2 \leq \tfrac{4{\mL}\tau}{b}(\psi(x)-\psi(x^\star)+\psi(\tilde x)- \psi(x^\star));\]
\end{enumerate} 
where $\tau = 1$ if $\mathcal{S}$ is drawn with replacement and $\tau=\frac{N-b}{N-1}$ if $\mathcal{S}$ is drawn without replacement.
\end{corollary}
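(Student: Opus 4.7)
The plan is to derive the stated inequalities by simply chaining the two previous lemmas and tracking the combinatorial factor. The key observation is that both Lemma \ref{lem:bound_zeta_i} and Lemma \ref{lem:variance_replacement} are conditional expectations given $\mathcal{F}$, so all the bounds combine naturally without any further measurability arguments.

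For part (i) in the with-replacement case, I would start from Lemma \ref{lem:variance_replacement}(i), which gives $\E\|u-\nabla f(x)\|^2 \leq b^{-1}\E\|\zeta_i\|^2$, and then insert the first bound of Lemma \ref{lem:bound_zeta_i}, namely $\E\|\zeta_i\|^2 \leq \mL^2\|x-\tilde x\|^2$. This yields the claim with $\tau=1$. In the without-replacement case, Lemma \ref{lem:variance_replacement}(ii) gives a sum $\sum_{i=1}^N \E\|\zeta_i\|^2$, which by the same bound on each summand is at most $N\mL^2\|x-\tilde x\|^2$. The prefactor $(1-b/N)\cdot\frac{1}{b(N-1)}\cdot N$ simplifies to $\frac{N-b}{b(N-1)}=\tau/b$, matching the claim.

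For part (ii), the argument is structurally identical but uses the second estimate of Lemma \ref{lem:bound_zeta_i}, i.e., $\E\|\zeta_i\|^2 \leq 4\mL(\psi(x)-\psi(x^\star)+\psi(\tilde x)-\psi(x^\star))$. Inserting this bound (which is the same for every $i$) in place of the Lipschitz bound above, and using the same combinatorial computation for the $\tau$-factor, gives the second inequality. The convexity of each $f_i$ and existence of a minimizer $x^\star$ are the only additional assumptions and are inherited directly from Lemma \ref{lem:bound_zeta_i}.

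I do not anticipate any real obstacle: the result is a bookkeeping exercise once the two lemmas are in place. The one small point worth being explicit about is that in the without-replacement case the per-index bound on $\E\|\zeta_i\|^2$ does not depend on $i$, so uniformly summing $N$ copies is valid; everything else is arithmetic manipulation of the sampling factor.
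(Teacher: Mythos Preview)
Your proposal is correct and matches the paper's approach, which simply states that the corollary follows by combining \cref{lem:bound_zeta_i} and \cref{lem:variance_replacement} without further detail. One small clarification: in part~(ii) the second bound of \cref{lem:bound_zeta_i} is only stated for a uniformly drawn index (i.e., it bounds the average $\frac{1}{N}\sum_j\|\zeta_j\|^2$, not each $\|\zeta_i\|^2$ individually), but since $\sum_{i=1}^N\E\|\zeta_i\|^2 = N\cdot\E_{i\sim\mathrm{Unif}[N]}\|\zeta_i\|^2$ conditioned on $\mathcal F$, your arithmetic with the factor $N$ still goes through unchanged.
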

%

\section{Proof for the Weakly Convex Case}\label{proof:thm:general_case}

\begin{proof}[Proof of \cref{thm:general_case}]
Let us fix the index of the outer loop $s$.
Recall the notation $\mathcal{A}_{\mathcal{S}_k}=\oneover{b}(\trp{A_{\ixmap_k(1)}},\dots,\trp{A_{\ixmap_k(b)}})^\top$
and abbreviate $\ixmap_k$ by $\ixmap$.
Let $(\hat{x}^{k+1},\hat\xi^{k+1})$ denote the pair of exact solutions of the implicit updates \cref{eqn:update_x_stochastic} and \cref{eqn:nonlinear_system_stochastic}.
In particular, setting $w^{k} := \mathcal{A}_{\mathcal S_k}^\top \xi^{k+1} - M_{\mathcal S_k}x^k  + v^k$ and $\hat w^{k} := \mathcal{A}_{\mathcal S_k}^\top \hat \xi^{k+1} - M_{\mathcal S_k}x^k  + v^k$, we have
\begin{equation} \label{eq:xhat-def} 
 {x}^{k+1}=\prox{\alpha_k \phi}(x^k- \alpha_kw^{k}) \quad \text{and} \quad \hat{x}^{k+1}=\prox{\alpha_k \phi}(x^k- \alpha_k\hat w^{k}). 
\end{equation}
We introduce the deterministic proximal update
\begin{equation} \label{eq:b-barx}
\bar x^{k+1} = \prox{\alpha_k \psi}^{I+\alpha_k M_N}(x^k) = \prox{\alpha_k\phi}(x^k-\alpha_k\nabla f(\bar x^{k+1}) - \alpha_k M_N(\bar x^{k+1}-x^k)).
\end{equation}
Using the Lipschitz smoothness of $f$, we obtain
\begin{equation}\label{est1}
f(x^{k+1}) \le f(x^k) + \langle\nabla f(x^k),x^{k+1}-x^k\rangle + \frac{L}{2} \|x^{k+1}-x^k\|^2.
\end{equation}
Setting $p = \prox{\gamma\phi}(y-\gamma w)$ and applying the optimality condition of the proximity operator \cref{eq:prox-char}, it holds that
\begin{equation} \label{eq:phi-diff} \phi(p) - \phi(z) \leq -\iprod{w}{p-z} + \frac{1}{2\gamma}\|y-z\|^2 - \frac{1}{2\gamma}\|p-y\|^2 - \frac{1}{2\gamma}\|p-z\|^2 \end{equation}
for all $y, w \in \Rn$, $z \in \dom(\phi)$, and $\gamma > 0$, see, e.g., \cite[Lem.\ 1]{J.Reddi2016} for comparison. 
Setting $y = x^k$, $\gamma = \alpha_k$, $w = w^{k}$, and $z = \bar x^{k+1}$, this yields 
\begin{equation}\label{est2}
\begin{aligned}
\phi(x^{k+1}) & \le \phi(\bar x^{k+1})
-\langle w^{k},x^{k+1}-\bar x^{k+1}\rangle
+\tfrac{1}{2\alpha_k}\|\bar x^{k+1}-x^k\|^2\\
&\hspace{4ex} -\tfrac{1}{2\alpha_k}\|x^{k+1}-x^k\|^2
-\tfrac{1}{2\alpha_k}\|x^{k+1}-\bar x^{k+1}\|^2.
\end{aligned}
\end{equation}
Moreover, setting $y = x^k$, $\gamma = \alpha_k$, $w = \nabla f(\bar x^{k+1})+M_N(\bar x^{k+1}-x^k)$, and $z = x^k$ in \cref{eq:phi-diff} and recalling \cref{eq:b-barx}, it follows
\begin{equation}\label{est3}
\phi(\bar x^{k+1})\le \phi(x^k)
-\langle \nabla f(\bar x^{k+1})+M_N(\bar x^{k+1}-x^k),\bar x^{k+1}-x^k\rangle
-\tfrac{1}{\alpha_k}\|\bar x^{k+1}-x^k\|^2.
\end{equation}
Adding the three inequalities \cref{est1}, \cref{est2}, and \cref{est3}, we obtain
\begin{align}
\nonumber \psi(x^{k+1}) & \leq \psi(x^k) + \iprod{\nabla f(x^k)-\nabla f(\bar x^{k+1})}{\bar x^{k+1}-x^k} + \iprod{\nabla f(x^k)-w^k}{x^{k+1}-\bar x^{k+1}} \\ & \hspace{2ex}- \|\bar x^{k+1}-x^k\|_{M_N+\frac{1}{2\alpha_k}I}^2 + \tfrac12\left[L - \tfrac{1}{\alpha_k}\right] \|x^{k+1}-x^k\|^2 - \tfrac{1}{2\alpha_k}\|x^{k+1}-\bar x^{k+1}\|^2.
\label{eqn:starting_point}
\end{align}
Let us define $\tilde v^k := \nabla f_{\mathcal{S}_k}(x^k)-\nabla f_{\mathcal{S}_k}(\tilde{x})+\nabla f(\tilde{x}) = \nabla f_{\mathcal{S}_k}(x^k) + v^k$. We decompose the term $\nabla f(x^k)-w^k$ as follows: 
\begin{align*}
\nabla f(x^k)-w^k & \\ & \hspace{-12ex}= \underbracket{\begin{minipage}[t][3.3ex][t]{21ex}\centering$\displaystyle- \mathcal{A}_{\mathcal S_k}^\top (\xi^{k+1} -\hat{\xi}^{k+1})$\end{minipage}}_{=: \, T_1} \, \underbracket{\begin{minipage}[t][3.2ex][t]{52ex}\centering$\displaystyle-M_{\mathcal S_k}(\hat x^{k+1}-x^{k+1}) - (\nabla f_{\mathcal S_k}(\hat x^{k+1})-\nabla f_{\mathcal S_k}(x^{k+1}))$\end{minipage}}_{=: \, T_2} \\ & \hspace{-8ex} \underbracket{\begin{minipage}[t][3.2ex][t]{47ex}\centering$\displaystyle- M_{\mathcal S_k}(x^{k+1}-x^{k}) - (\nabla f_{\mathcal S_k}(x^{k+1}) - \nabla f_{\mathcal S_k}(x^{k}))$\end{minipage}}_{=: \, T_3} \, + \,  \nabla f(x^k) - \tilde v^k,
\end{align*}
where we used $\mathcal{A}_{\mathcal S_k}^\top \hat\xi^{k+1} = \nabla f_{\mathcal S_k}(\hat x^{k+1})+M_{\mathcal S_k}\hat x^{k+1}$
(cf.\ \cref{eqn:update_xi_stochastic}, where $\xi^{k+1}$ and $x^{k+1}$ must
be replaced by $\hat\xi^{k+1}$ and $\hat x^{k+1}$, respectively, since in
\cref{alg:snspp} and beyond, $\xi^{k+1}$ and $x^{k+1}$ involve inexactness). Next, from the choice of $\epsilon_\text{sub}$ and \cref{prop:inexactness_bound}, we obtain $\|T_1\|^2 \leq \frac{\bar{A}^2}{\mu_*^2b}\epsilon_{k}^2$.
%
%
Applying Lipschitz smoothness, \cref{prop:inexactness_bound}, and using
$\|M_{\mathcal S_k}\| \leq \bar M$ (cf. \cref{eqn:definition_of_constants}), this yields 
\begin{align*}
\|T_2\| &\leq (\bar{L}_{b} + \|M_{\mathcal{S}_k}\|) \|\hat{x}^{k+1}-x^{k+1}\| \leq (\bar{L}_{b} + \bar M) \sqrt{\bar{A}^2 b^{-1}}\mu_*^{-1}\alpha_k \epsilon_{k} 
\end{align*}
for all $k =0,\dots,m-1$. Using $M_{\mathcal S_k} \succeq 0$ and Young's inequality, we have 
\begin{align*} \iprod{T_3}{x^{k+1}-\bar x^{k+1}} &= -\iprod{M_{\mathcal S_k}(\bar x^{k+1}-x^{k})}{x^{k+1}-\bar x^{k+1}} - \|x^{k+1}-\bar x^{k+1}\|_{M_{\mathcal S_k}}^2 \\ & \hspace{4ex} - \iprod{\nabla f_{\mathcal S_k}(x^{k+1}) - \nabla f_{\mathcal S_k}(x^{k})}{x^{k+1}-\bar x^{k+1}} \\ & \hspace{-8ex} \leq \left[\tfrac{\bar M}{2\sigma^3_k}+\tfrac{\bar{L}_{b}}{2\sigma^4_k}\right] \|x^{k+1}-\bar x^{k+1}\|^2 + \tfrac{\bar{L}_{b}\sigma^4_k}{2} \|x^{k+1}-x^k\|^2 + \tfrac{\bar M\sigma^3_k}{2} \|\bar x^{k+1}-x^k\|^2 \end{align*}
for some $\sigma^3_k, \sigma^4_k > 0$. Combining these results with \cref{eqn:starting_point}, applying Young's inequality, and defining $\nu^1_k := L+\tfrac{1}{2}\bar M\sigma^3_k - \tfrac{1}{2\alpha_k}$, $\nu^2_k := \tfrac12 [L+{\bar{L}_{b}}{\sigma^4_k} - \tfrac{1}{\alpha_k}]$, 
\[ \; \nu^3_k := \tfrac12 \left[\tfrac{1}{\sigma^1_k} + \tfrac{1}{\sigma^2_k} +\tfrac{\bar M}{\sigma^3_k}  + \tfrac{\bar{L}_{b}}{\sigma^4_k}+ \tfrac{1}{\sigma^5_k}- \tfrac{1}{\alpha_k}\right], \; \nu^4_k := \tfrac12\left[ \sigma^1_k + \sigma_k^2\alpha_k^2 (\bar{L}_{b}+\bar M)^2 \right] \tfrac{\bar{A}^2}{\mu_*^2b}, \]
we obtain
%
\begin{align*}
\psi(x^{k+1}) - \psi(x^k) & \\ & \hspace{-13ex} \leq\tfrac{\sigma^1_k}{2} \|T_1\|^2 + \tfrac{\sigma^2_k}{2} \|T_2\|^2 + \tfrac{\sigma^5_k}{2} \|\nabla f(x^k) - \tilde v^k \|^2 + \left[L+\tfrac{\bar M\sigma^3_k}{2} - \tfrac{1}{2\alpha_k}\right] \|\bar x^{k+1} - x^k\|^2  \\ & \hspace{-9ex} + \tfrac12 \left[L+{\bar{L}_{b}}{\sigma^4_k} - \tfrac{1}{\alpha_k}\right] \|x^{k+1} - x^k\|^2 + \nu^3_k \|\bar x^{k+1} - x^{k+1}\|^2 \\ & \hspace{-13ex} \leq \tfrac{\sigma^5_k}{2} \|\nabla f(x^k)-\tilde v^k\|^2 + \nu^1_k \|\bar x^{k+1} - x^k\|^2  + \nu^2_k \|x^{k+1} - x^k\|^2 \\ & \hspace{-9ex} + \nu^3_k \|\bar x^{k+1} - x^{k+1}\|^2 + \nu^4_k \epsilon_{k}^2. \end{align*}
Using \cref{cor:c1} (with $x=x^k, \tilde x = \tilde{x}^s$), conditioned on all random
events occurring until the current iterate $x^k=x^{s,k}$, we have
\[
\E\|\nabla f(x^k)-\tilde v^k\|^2
\le {\mL^2}{b^{-1}} \|x^k-\tilde{x}^s\|^2 .
\]
for all $k \in \{0,\dots,m-1\}$. Taking expectation in the previous estimate, it holds that
\begin{equation}
\label{eq:esti-prep}
\begin{aligned}
\E[\psi(x^{k+1})] &\le \E[\psi(x^k)] + \tfrac{\mL^2\sigma^5_k}{2b} \E\|x^k-\tilde{x}^s\|^2 + \nu^1_k \E\|\bar x^{k+1}-x^k\|^2 \\
&\hspace{4ex}+\nu^2_k \E\|x^{k+1}-x^{k}\|^2 + \nu^3_k \E\|\bar x^{k+1}-x^{k+1}\|^2 + \nu^4_k \epsilon_{k}^2. 
\end{aligned}
\end{equation}
With the goal of balancing the terms in $\nu_k^1,\dots,\nu_k^4$, we now choose $\sigma^1_k = m\alpha_k/(1-\bar\eta)$, $\sigma^2_k = {\sqrt{2b}}/[{\bar{L}_{b}\bar\eta}]$, $\sigma^3_k = \sigma^4_k = 1$, and $\sigma^5_k = {\sqrt{2b}}/{[\mL(m-1)]}$. 
For $k < m$, using $\tilde{x}^s=x^{0}$ and the Cauchy-Schwarz inequality, we deduce
\[
\E\|x^k-\tilde{x}^s\|^2 = \E\left\|{\sum}_{i=0}^{k-1} (x^{i+1}-x^i)\right\|^2
\le k \sum_{i=0}^{k-1} \E\|x^{i+1}-x^i\|^2
\le k \sum_{i=0}^{m-2} \E\|x^{i+1}-x^i\|^2.
\]
Summing this estimate for $k=0,\ldots,m-1$ gives
%
\begin{equation} \label{eq:vr-sumsum}
\sum_{k=0}^{m-1}\E\|x^k-\tilde{x}^s\|^2 \le
\sum_{k=0}^{m-1} k \sum_{i=0}^{m-2} \E\|x^{i+1}-x^i\|^2\le
\tfrac{m(m-1)}{2} \sum_{i=0}^{m-2} \E\|x^{i+1}-x^i\|^2.
\end{equation}
As $\sigma_k^5$ is independent of $k$, summing \cref{eq:esti-prep} over $k=0,\ldots,m-1$ gives
\begin{align}
\label{eq:ineq-nus}
\begin{split}
&\E[\psi(x^{m})]
\le \E[\psi(x^{0})] + \sum_{k=0}^{m-1} \left[ \tfrac{\mL^2m(m-1)\sigma^5_k}{4b} + \nu^2_k\right] \E\|x^{k+1}-x^k\|^2  \\
&\hspace{4ex} + \sum_{k=0}^{m-1} \nu^1_k \E\|\bar x^{k+1}-x^k\|^2 + \sum_{k=0}^{m-1}\nu^3_k\E\|x^{k+1}-\bar x^{k+1}\|^2 + \sum_{k=0}^{m-1}\nu^4_k\epsilon_{k}^2 .
\end{split}
\end{align}
Let $\bar\eta \in (0,1)$ be as stated in \cref{thm:general_case}. Utilizing the specific choices of $\sigma_k^1,\dots,\sigma_k^5$, we obtain ${\mL m}/{\sqrt{2b}} + 2\nu^2_k \leq -(1-\bar\eta)/\alpha_k$ if $\bar\eta{\alpha_k}^{-1} \geq [1+{m}/{\sqrt{2b}}] \mL + L$,  
\[ \nu^1_k \leq -\tfrac{1-\bar\eta}{2\alpha_k} \;  \iff \; \tfrac{\bar\eta}{\alpha_k} \geq 2L+\bar M, \quad \text{and} \quad \nu^3_k \leq 0 \; \Longleftarrow \; \tfrac{1}{\alpha_k} \geq \tfrac{(\bar M + \mL)m}{m-(1-\bar\eta)} + \tfrac{{\mL}m}{\sqrt{2b}}. \]
Thus, defining
\[
\tfrac{1}{\hat\alpha} := \tfrac{1}{\bar\eta}\max\left\{2L + \bar{M},\left[1+\tfrac{m}{\sqrt{2b}}\right] \mL +\max\{\bar M,L\} \right\},
\]
it holds $\nu_k^4 \leq \frac12[m/(1-\bar\eta)+\sqrt{2b}(1+\bar M/\bar{L}_{b})]\tfrac{\bar{A}^2}{\mu_*^2b} \alpha_k =: \bar\nu\alpha_k$. 

Next, introducing the auxiliary function $y \mapsto \psi_x(y) := \psi(y) + \frac12 \|y-x\|_{M_N}^2$, we can write $\bar x^{k+1} = \prox{\alpha_k\psi_{x^k}}(x^k)$. Consequently, applying \cite[Lem.\ 2]{Nesterov2013} and \cite[Thm.\ 3.5]{Dima2018} with $G = \partial\phi$, $\Phi = \partial\psi_{x^k}$, $t = \alpha_k$, and $\beta = L+\bar M$, it follows:
\begin{align} 
\label{eqn:prox_to_fnat}
\|\bar x^{k+1}-x^k\| \geq (1-(L+\bar M)\alpha_k) \|F_{\mathrm{nat}}^{\alpha_k}(x^k)\| \geq (1-\bar\eta)\alpha_k \|F_{\mathrm{nat}}(x^k)\|
\end{align}
as long as $\alpha_k \leq \min\{\hat\alpha,1\}$. We now use full notations showing the $(s,k)$-dependence. Let us define $\bar{x}^{s,k+1} := \prox{\alpha_k^s \psi}^{I+\alpha_k^sM_N}(x^{s,k})$,
\[\tau^1_s := {\sum}_{k=0}^{m-1}(\alpha_k^s)^{-1}\E\|\bar x^{s,k+1}-x^{s,k}\|^2, ~ 
\tau^2_s := {\sum}_{k=0}^{m-1}(\alpha_k^s)^{-1}\E\|x^{s,k+1}-x^{s,k}\|^2.
\] 
Using $\alpha_k^s\le\hat\alpha$, \cref{eq:ineq-nus} then implies
\begin{align}
\label{eqn:for_complexity_cor}
\E[\psi(\tilde{x}^{s+1})]-\E[\psi(\tilde{x}^s)] &\le
- \frac{1-\bar\eta}{2} (\tau^1_s+\tau^2_s) + \bar\nu \cdot {\sum}_{k=0}^{m-1}\alpha_k^s(\epsilon_k^s)^2 
\end{align}
and due to \ref{A2} and $\sum_{s=0}^\infty \sum_{k=0}^{m-1}\alpha_k^s(\epsilon_k^s)^2 < \infty$, we further get
%
%
\begin{equation} \label{eq:sum-result} {\sum}_{s=0}^\infty\tau^1_s < \infty \quad \text{and} \quad {\sum}_{s=0}^\infty \tau^2_s < \infty. \end{equation}
Finally, the estimates \cref{eqn:prox_to_fnat} and \cref{eq:sum-result} yield $\sum_{s=0}^\infty \sum_{k=0}^{m-1} \alpha_k^s \E\|F_{\mathrm{nat}}(x^{s,k})\|^2 < \infty$ and $\sum_{s=0}^\infty \sum_{k=0}^{m-1} \alpha_k^s \|F_{\mathrm{nat}}(x^{s,k})\|^2 < \infty$ almost surely. Due to $\sum_{s=0}^\infty\sum_{k=0}^{m-1} \alpha_k^s = \infty$ and the Borel-Cantelli lemma \cite{Durrett2019}, 
it holds $\liminf_{s \to \infty} \|F_{\mathrm{nat}}(x^{s,k})\|^2 = 0$ almost surely for all $0\leq k <m$. The almost sure convergence $F_{\mathrm{nat}}(x^{s,k}) \to 0$ (and $\E\|F_{\mathrm{nat}}(x^{s,k})\|\to 0$) essentially follows from \cref{eq:sum-result} and from the Lipschitz continuity of $x \mapsto F_{\mathrm{nat}}(x)$ and can be shown as in \cite[Thm.\ 3.3 and Thm.\ 4.1]{Yang2021}. As this last step is basically identical to the proofs in \cite{Yang2021}, we omit further details and refer to \cite{Yang2021}.
%
%
\end{proof}
\begin{proof}[Proof of  \cref{cor:general_case_rate}]
We have $\psi(\tilde{x}^{S+1}) \geq \psi^\star$ for all $S$. In the previous proof, \cref{eqn:for_complexity_cor} can be obtained directly from \cref{eq:ineq-nus}, using only the condition $\alpha_k^s \leq \hat{\alpha}$ on the step sizes (in particular, we do not need to assume $\alpha_k^s \leq \min\{\hat{\alpha},1\}$). Summing \cref{eqn:for_complexity_cor} from $s=0,\dots,S$, and setting $\bar x_\pi := \prox{\alpha \psi}^{I+\alpha M_N}(x_\pi)$, we conclude
\begin{align*}
\E\|\bar x_\pi-x_\pi\|^2
\le \tfrac{2\alpha} {(1-\bar{\eta})m(S+1)}\left[\psi(\tilde x^0)-\psi^\star+ \bar\nu\alpha\cdot {\sum}_{s=0}^S{\sum}_{k=0}^{m-1} (\epsilon_k^s)^2\right].
\end{align*}
As in \cref{eqn:prox_to_fnat}, we can now utilize the bound $\|\bar x_\pi-x_\pi\|  \geq (1-(L+\bar{M})\alpha) \|F_\text{nat}^\alpha(x_\pi)\| \geq (1-\bar{\eta}) \|F^\alpha_\text{nat}(x_\pi)\|$ to express complexity in terms of $\E\|F_\text{nat}(x_\pi)\|^2$.
\end{proof}
%
%
\section{Proof for the Strongly Convex Case}
\label{proofs:str_convex}
The proofs of \cref{thm:str_convex_case_obj} and \cref{thm:str_convex_case} have several identical steps. We start by proving the latter.
\begin{proof}[Proof of \cref{thm:str_convex_case}]
%
%
%
%
Fix $s \in \N_0$ and let $k\in\{0,\dots,m-1\}$ be given. Let again $(\hat{x}^{k+1},\hat\xi^{k+1})$ denote the pair of exact solutions of \cref{eqn:update_x_stochastic} and \cref{eqn:nonlinear_system_stochastic}.
Due to %
\begin{align}
\label{eqn:xi_hat_prop}
\hat{\xi}_i^{k+1} = \nabla f_{\ixmap(i)}(A_{\ixmap(i)} \hat{x}^{k+1}) + \gamma_{\ixmap(i)} A_{\ixmap(i)} \hat{x}^{k+1}, \quad i \in [b],
\end{align}
(cf.\ \cref{eqn:update_xi_stochastic} replacing again $(\xi^{k+1},x^{k+1})$ by $(\hat\xi^{k+1},\hat x^{k+1})$) and \cref{eq:xhat-def}, we have
\begin{align*}
\hat{x}^{k+1}  = \prox{\alpha\phi}(x^k - \alpha [\nabla f_{\mathcal S_k}(\hat x^{k+1})+v^k] -\alpha M_{\mathcal S_k}(\hat x^{k+1}-x^k)).
\end{align*}
Furthermore, introducing $\psi_k(x) := \psi_{\mathcal S_k}(x) + \iprod{v^k}{x-x^k}$, the underlying optimality condition of the proximity operator \cref{eq:prox-char} implies
\begin{align*} p = \hat x^{k+1} & \quad \iff \quad p \in x^k - \alpha M_{\mathcal S_k}(p-x^k) - \alpha[\partial \phi(p) + \nabla f_{\mathcal S_k}(p) + v^k] \\ & \quad \iff \quad p = \prox{\alpha\psi_{k}}^{I+\alpha M_{\mathcal S_k}}(x^k). \end{align*}
Moreover, using \ref{A1}, the mapping 
\[ x \mapsto \hat F_k(x) := f_{\mathcal S_k}(x) + \frac12\|x-x^k\|^2_{M_{\mathcal S_k}} = \oneover{b}{\sum}_{i \in \mathcal S_k} f_i(A_ix) + \frac{\gamma_i}{2} \|A_i(x-x^k)\|^2 \]
is convex for all $k$. Hence, setting $\Psi_k(x) := \hat F_k(x) + \phi(x) + \iprod{v^k}{x-x^k}$, the function $x \mapsto \Psi_k(x) + \frac{1}{2\alpha}\|x-x^k\|^2$ is $(\mu_\phi+{\alpha}^{-1})$-strongly convex and due to $\hat x^{k+1} = \argmin_{x} \Psi_k(x) + \frac{1}{2\alpha}\|x-x^k\|^2$, it follows
\begin{align} \label{eq:phi-k-esti} \Psi_k(x) + \tfrac{1}{2\alpha} \|x - x^k\|^2  \geq~& \Psi_k(\hat x^{k+1}) + \tfrac{1}{2\alpha} \|\hat x^{k+1} - x^k\|^2 \\
&+ \tfrac{1}{2}\left[\mu_\phi+\tfrac{1}{\alpha} \right] \|x - \hat x^{k+1}\|^2  \nonumber \end{align}
for all $x \in \dom(\phi)$. Next, combining the optimality condition \cref{eq:prox-char}, the update rule of \cref{alg:snspp}, and \cref{eqn:xi_hat_prop}, we obtain
\begin{align*}
x^{k+1} &\in x^k -\alpha\trp{\mathcal{A}}_{\mathcal S_k} \xi^{k+1} - \alpha v^k + \alpha M_{\mathcal S_k} x_k - \alpha \partial \phi(x^{k+1}) \\
&= x^k -\alpha\trp{\mathcal{A}}_{\mathcal S_k}(\xi^{k+1} - \hat \xi^{k+1}) - \alpha [\nabla f_{\mathcal S_k}(\hat x^{k+1})-\nabla f_{\mathcal S_k}(x^{k+1})] \\ & \hspace{-4ex}- \alpha M_{\mathcal S_k}(\hat x^{k+1}-x^{k+1}) - \alpha [\nabla f_{\mathcal{S}_k}({x}^{k+1}) + \partial \phi(x^{k+1}) + v^k + M_{\mathcal S_k}(x^{k+1}-x^{k})].
\end{align*}
Setting $h^{k+1} :=\trp{\mathcal{A}}_{\mathcal S_k}(\xi^{k+1} - \hat \xi^{k+1})+[\nabla f_{\mathcal S_k}(\hat x^{k+1})-\nabla f_{\mathcal S_k}(x^{k+1})] + M_{\mathcal S_k}(\hat x^{k+1}-x^{k+1})$, this shows that $-h^{k+1}+(x^k-x^{k+1})/\alpha \in \partial \Psi_k(x^{k+1})$. Thus, due to the strong convexity of $\Psi_k$ and applying $-\iprod{a}{b} = \frac12 \|a-b\|^2 - \frac12 \|a\|^2 - \frac12 \|b\|^2$, it follows
\begin{align*} \Psi_k(x^{k+1}) - \Psi_k(y) & \\ & \hspace{-15ex} \leq - \frac{1}{\alpha} \iprod{x^k-x^{k+1}}{y-x^{k+1}} + \iprod{h^{k+1}}{y-x^{k+1}} - \frac{\mu_\phi}{2}\|y-x^{k+1}\|^2 \\ & \hspace{-15ex} = \frac{1}{2\alpha}[ \|x^k - y\|^2 - \|x^{k+1}-x^k\|^2]  - \frac12\left[ \mu_\phi+\frac{1}{\alpha}\right] \|x^{k+1}-y\|^2 + \iprod{h^{k+1}}{y-x^{k+1}} \end{align*}
 for all $y \in \dom(\phi)$. Using this estimate in \cref{eq:phi-k-esti} with $y = \hat x^{k+1}$ and applying Young's inequality, we have
 \begin{align*} 
 \frac{1}{2\alpha} [(1+\alpha\mu_\phi)\|\hat x^{k+1} - x\|^2 - \|x^{k} - x\|^2] & \\ & \hspace{-34ex} \leq \Psi_k(x) - \Psi_k(x^{k+1}) + \Psi_k(x^{k+1}) - \Psi_k(\hat x^{k+1}) - \frac{1}{2\alpha} \|\hat x^{k+1}-x^k\|^2 \\ & \hspace{-34ex} \leq \Psi_k(x) - \Psi_k(x^{k+1}) - \frac{1}{2\alpha} \|x^{k+1}-x^k\|^2 - \frac{\mu_\phi}{2} \|\hat x^{k+1}-x^{k+1}\|^2  + \frac{\alpha}{2} \|h^{k+1}\|^2. \end{align*}
 %
Next, we expand the first term on the right hand side as follows:
 \begin{align*} \Psi_k(x) - \Psi_k(x^{k+1}) & = [\psi(x) - \psi(x^{k+1})] + [f_{\mathcal S_k}(x) - f(x)] + [\hat F_k(x^k) - \hat F_k(x^{k+1})] \\ & \hspace{-12ex} +\iprod{v^k}{x-x^{k+1}} + [f(x^{k+1})-f(x^k)]  + [f(x^k)-f_{\mathcal S_k}(x^k)] + \frac12 \|x-x^k\|_{M_{\mathcal S_k}}^2. \end{align*}
By the convexity of $\hat F_k$, we have $\hat F_k(x^k) - \hat F_k(x^{k+1}) \leq \iprod{\nabla \hat F_k(x^k)}{x^k-x^{k+1}} = \iprod{\nabla f_{\mathcal S_k}(x^k)}{x^k-x^{k+1}}$. Combining this with the Lipschitz continuity of $\nabla f$, it further holds that
%
\begin{align*} 
[\hat F_k(x^k) - \hat F_k(x^{k+1})] + [f(x^{k+1})-f(x^k)] & \\ & \hspace{-24ex} \leq \iprod{\nabla f(x^k)-\nabla f_{\mathcal S_k}(x^k)}{x^{k+1}-x^k} + \frac{L}{2} \|x^{k+1}-x^k\|^2. 
\end{align*}
%
In addition, using Young's inequality, we have
\[ \|\hat x^{k+1}-x\|^2 \geq (1- \rho_1) \|x^{k+1}-x\|^2 + \left[1 - \frac{1}{\rho_1}\right] \|\hat x^{k+1}-x^{k+1}\|^2 \]
for $\rho_1 \in (0,1)$. Together, applying Young's inequality once more and setting $e_k(x) := f_{\mathcal S_k}(x) - f(x) + f(x^k)-f_{\mathcal S_k}(x^k) + \iprod{v^k}{x-x^k}-\frac12 \|x^k-x\|^2_{M_N-M_{\mathcal S_k}}$, this yields
%
 \begin{align}
 \label{eqn:strconvex_obj_start}
 \begin{split}
 &(1+\alpha\mu_\phi)(1-\rho_1)\|x^{k+1} - x\|^2 - \|x^{k} - x\|^2  \\ & \hspace{4ex} \leq 2\alpha[\psi(x) - \psi(x^{k+1})] + 2\alpha e_k(x) + \alpha \|x-x^k\|_{M_N}^2 \\ & \hspace{4ex} + \frac{\alpha}{\rho_2} \|\nabla f(x^k)-\nabla f_{\mathcal S_k}(x^k)-v^k\|^2 - \left[1 - (L+\rho_2)\alpha \right] \|x^{k+1}-x^k\|^2 \\ & \hspace{4ex} + {\alpha^2} \|h^{k+1}\|^2 + (1+\alpha\mu_\phi)(\rho_1^{-1}-1) \|\hat x^{k+1}-x^{k+1}\|^2
 \end{split} 
 \end{align}
 %
for all $x \in \dom(\phi)$ and $\rho_2 > 0$, where we used $-\alpha\mu_\phi -(1+\alpha\mu_\phi)(1-\rho_1^{-1})\leq (1+\alpha\mu_\phi)(\rho_1^{-1}-1)$. The choice of $\epsilon_\text{sub}$ and \cref{prop:inexactness_bound} imply 
%
 $\|\trp{\mathcal{A}_{\mathcal S_k}}(\xi^{k+1}-\hat \xi^{k+1})\| \leq \tfrac{\bar{A}}{\mu_*\sqrt{b}} \epsilon_{k}$
%
and $\|\hat{x}^{k+1}-x^{k+1}\| \leq \tfrac{\alpha\bar{A}}{\mu_*\sqrt{b}} \epsilon_{k}$.
Moreover, applying Lipschitz smoothness and \cref{prop:inexactness_bound}, it holds that 
\begin{align*}
\|h^{k+1}\| &\leq \tfrac{\bar{A} }{\mu_*\sqrt{b}}\epsilon_{k} + (\bar{L}_{b} + \|M_{\mathcal{S}_k}\|) \|\hat{x}^{k+1}-x^{k+1}\| \leq (1+\alpha[\mL + \bar M]) \tfrac{\bar{A}}{\mu_*\sqrt{b}} \epsilon_{k}.
\end{align*}
We now choose $x = x^\star$; this yields
$\psi(x^\star) - \psi(x^{k+1}) \leq - \frac{\mu}{2} \|x^{k+1}-x^\star\|^2. $
Furthermore, \cref{cor:c1} (with $x=x^k, \tilde x = \tilde{x}^s$) yields $\E\|\nabla f(x^k)-\nabla f_{\mathcal S_k}(x^k)-v^k\|^2 \leq {\mL^2}{b^{-1}} \|x^k-\tilde x^s\|^2$ and we have $\E[e_k(x^\star)] = 0$. In addition, by definition and due to the Lipschitz continuity of $F_{\mathrm{nat}}$, we obtain $\epsilon_k \leq \delta_s \|F_{\mathrm{nat}}(\tilde x^s)\| \leq (2+L)\delta_s \|\tilde x^s - x^\star\|$. Using $M_N \preceq (\mu_\phi-\mu)I$, combining our previous results, and taking expectation, it follows
%
%
%
\begin{align*}
[1+\alpha(\mu_\phi+\mu)-\rho_1(1+\alpha\mu_\phi)] \E\|x^{k+1}-x^\star\|^2 & \\ & \hspace{-45ex} \leq [1+\alpha(\mu_\phi-\mu)] \E\|x^k-x^\star\|^2 + \frac{\mL^2\alpha}{b\rho_2} \E\| x^k-\tilde x^s\|^2  - [1-(L+\rho_2)\alpha] \E\|x^{k+1}-x^k\|^2 \\ & \hspace{-41ex} + \underbracket{\begin{minipage}[t][3.8ex][t]{52ex}\centering$\displaystyle\left[(1+\alpha\mu_\phi)\rho_1^{-1}+(1+\alpha[\mL+\bar M])^2\right] \tfrac{\bar{A}^2}{\mu_*^2b} (2+L)^2 \alpha^2$\end{minipage}}_{=: c(\alpha)} \delta_s^2 \E\|\tilde x^s - x^\star\|^2.
\end{align*}
We now suppose that $\rho_1$ is chosen such that $(1+\alpha\mu_\phi)\rho_1 \leq 2\alpha\mu$. Then, summing the last estimate for $k = 0, \dots,m-1$ and invoking \cref{eq:vr-sumsum}, this implies
\begin{align*} [1+\alpha(\mu_\phi+\mu)-\rho_1(1+\alpha\mu_\phi)] \E\|\tilde x^{s+1}-x^\star\|^2 & \\ & \hspace{-40ex} \leq [1+\alpha(\mu_\phi-\mu)+c(\alpha)m\delta_s^2] \E\|\tilde x^s-x^\star\|^2 \\ & \hspace{-36ex} - \left[1 - \left(L+\rho_2+\tfrac{\mL^2m(m-1)}{2b\rho_2}\right)\alpha\right] {\sum}_{k=0}^{m-1} \E\|x^{k+1}-x^k\|^2.
\end{align*}
Choosing $\rho_2 = \mL\sqrt{m(m-1)}/\sqrt{2b}$, $\alpha \leq (L+\sqrt{2}\mL m/\sqrt{b})^{-1}$, and $\rho_1 = \delta_s$, we obtain
\begin{align*} \E\|\tilde x^{s+1}-x^\star\|^2 & \leq \left[1 - \tfrac{2\alpha\mu}{1+\alpha(\mu_\phi+\mu)-\rho_1(1+\alpha\mu_\phi)} + \tfrac{\rho_1(1+\alpha\mu_\phi)+c(\alpha)m\delta_s^2}{1+\alpha(\mu_\phi+\mu)-\rho_1(1+\alpha\mu_\phi)} \right] \E\|\tilde x^{s}-x^\star\|^2 \\ & \leq \left[ 1- \tfrac{2\alpha\mu}{1+\alpha(\mu_\phi+\mu)} + \mathcal O(\delta_s) \right]  \E\|\tilde x^{s}-x^\star\|^2 \end{align*}
as $s \to \infty$. This proves q-linear convergence of $\{\tilde x^s\}$ to $x^\star$ in expectation.
\end{proof}
\begin{proof}[Proof of \cref{thm:str_convex_case_obj}]
Let the iteration index $s\in\N_0$ and $k\in\{0,\dots,m-1\}$ be fixed and let $h^{k+1}, \hat x^{k+1}, e_k(x)$ be defined as in the proof of \cref{thm:str_convex_case}. As all $f_i$ are convex we have $M_{\mathcal{S}_k}=M_N=0$. Denote by $\mu_\phi\geq 0$ the strong convexity parameter of $\phi$. Let $x^\star$ denote the unique solution to \cref{prob:deterministic} satisfying $\psi^\star=\psi(x^\star)$. Proceeding as in the proof of \cref{thm:str_convex_case}, we obtain the following analogue to \cref{eqn:strconvex_obj_start}
\begin{align*} 
&(1+\alpha\mu_\phi)(1-\rho_1)\|x^{k+1} - x\|^2 - \|x^{k} - x\|^2 \leq 2\alpha[\psi(x) - \psi(x^{k+1})] + 2\alpha e_k(x) \\ & \hspace{4ex} + \frac{\alpha}{\rho_2} \|\nabla f(x^k)-\nabla f_{\mathcal S_k}(x^k)-v^k\|^2 - \left[1 - (L+\rho_2)\alpha \right] \|x^{k+1}-x^k\|^2 \\ & \hspace{4ex} + {\alpha^2} \|h^{k+1}\|^2 + (1+\alpha\mu_\phi)(\rho_1^{-1}-1) \|\hat x^{k+1}-x^{k+1}\|^2 \end{align*}
for $x\in\mathrm{dom}(\phi)$, $\rho_1\in(0,1)$, and $\rho_2>0$. Conditioned on $x^k$, we have $\E[e_k(x)]=0$. By \cref{prop:inexactness_bound}, it holds $\|h^{k+1}\|^2 \leq (1+\alpha{\mL})^2\tfrac{\bar{A}^2}{\mu_\ast^2b}\epsilon_k^2$ and $\|\hat x^{k+1}-x^{k+1}\|^2 \leq \tfrac{\alpha^2\bar{A}^2}{\mu_\ast^2b}\epsilon_k^2$ and we can again use the estimate $\epsilon_k\leq (2+L)\delta_s\|\tilde x^s -x^\star\|$. Applying \cref{cor:c1} (with $x=x^k, \tilde x = \tilde{x}^s$), conditioned on the history of iterates up to $x^k=x^{s,k}$, we have
\begin{align*}
\E\|\nabla f(x^k)-\nabla f_{\mathcal S_k}(x^k)-v^k\|^2 \leq {4{\mL}}{b^{-1}}(\psi(x^k)-\psi^\star+\psi(\tilde{x}^s)-\psi^\star) \end{align*}
almost surely. At this point, we assume that the condition $1 - (L+\rho_2)\alpha \geq 0$ holds (for the selected $\rho_2$ and $\alpha$). Next, setting $\beta(\rho_1):= (1+\alpha\mu_\phi)(1-\rho_1)$ and $x=x^\star$, we apply expectation
conditioned on the history up to iterate $x^k=x^{s,k}$ and conclude
\begin{align}
\label{eqn:42}
\begin{split}
&\beta(\rho_1)\E\|x^{k+1} - x^\star\|^2 \leq \|x^{k} - x^\star\|^2 + 2\alpha\E[\psi^\star - \psi(x^{k+1})] \\
&\hspace{15ex}+{4\alpha{\mL}}{(\rho_2b)^{-1}}(\psi(x^k)-\psi^\star+\psi(\tilde{x}^s)-\psi^\star)+\tilde{c}(\alpha)\delta_s^2\|\tilde x^s -x^\star\|^2 ,
\end{split}
\end{align}
where $\tilde{c}(\alpha):=  ((1+\alpha{\mL})^2 + (1+\alpha\mu_\phi)\rho_1^{-1})\tfrac{\alpha^2\bar{A}^2}{\mu_\ast^2b} (2+L)^2$.
Using $\|x^k-x^\star\|^2\leq \frac{2}{\mu}(\psi(x^k)-\psi^\star)$ and $1-\beta(\rho_1) = \rho_1+\alpha\mu_\phi(\rho_1-1) \leq\rho_1$, this yields 
\begin{align}
\label{eqn:41}
&\beta(\rho_1)\E\|x^{k+1} - x^\star\|^2 \leq \beta(\rho_1)\|x^{k} - x^\star\|^2 + 2\alpha\E[\psi^\star - \psi(x^{k+1})]\\
&~+\tilde{c}(\alpha)\delta_s^2\|\tilde x^s -x^\star\|^2+\tfrac{4\alpha{\mL}}{\rho_2b}(\psi(x^k)-\psi^\star+\psi(\tilde{x}^s)-\psi^\star) +\tfrac{2\rho_1}{\mu}(\psi(x^k)-\psi^\star). \nonumber
\end{align}
We now require $\rho_1\leq \min\{\frac12,\frac{\alpha\mu{\mL}}{\rho_2 b}\}$.
Using \cref{eqn:41} recursively for $k=1,\dots,m-1$ and \cref{eqn:42} for $k=0$, applying
expectation  conditioned on the history up to $\tilde x^s$ and the tower property, we obtain
\begin{align*}
&\hspace{-11ex}\beta(\rho_1)\E\|x^{m} - x^\star\|^2 +2\alpha(1-\tfrac{3{\mL}}{\rho_2b}){\sum}_{k=1}^{m}\E[\psi(x^k)-\psi^\star] \\
&\hspace{5ex} \leq (1+\tilde{c}(\alpha)m\delta_s^2)\|\tilde x^s -x^\star\|^2 + \tfrac{4\alpha{\mL}(m+1)}{\rho_2b}(\psi(\tilde{x}^s)-\psi^\star).
\end{align*}
Due to the convexity of $\psi$ and by Option II, we can infer $\psi(\tilde{x}^{s+1}) \leq \frac{1}{m}\sum_{k=1}^{m}\psi(x^k)$ and
hence, for $\rho_2>3{\mL}b^{-1}$ it holds that
\begin{align*}
2\alpha(1-\tfrac{3{\mL}}{\rho_2b})m\E[\psi(\tilde{x}^{s+1})-\psi^\star]  \leq (1+\tilde{c}(\alpha)m\delta_s^2)\|\tilde x^s -x^\star\|^2 + \tfrac{4\alpha{\mL}(m+1)}{\rho_2b}(\psi(\tilde{x}^s)-\psi^\star).
\end{align*}
Furthermore, the strong convexity of $\psi$ again implies $\|\tilde x^s -x^\star\|^2\leq \frac{2}{\mu}(\psi(\tilde{x}^s)-\psi^\star)$. We now set $\rho_2=\tfrac{{\mL}}{b}(\tfrac{4}{1-2\theta}+3)$, i.e., $\tfrac{4{\mL}}{\rho_2b}(1-\tfrac{3{\mL}}{\rho_2b})^{-1} = 1-2\theta$. This choice satisfies $\rho_2> 3{\mL}b^{-1}$ automatically 
%
and due to \cref{eqn:strcon_obj_alpha_assumption}, we have $(L+\rho_2)\alpha \leq 1$. This yields
\begin{align*}
\E[\psi(\tilde{x}^{s+1})-\psi^\star]  \leq \left[ \frac{1+\tilde{c}(\alpha)m\delta_s^2}{\mu\alpha(1-\tfrac{3{\mL}}{\rho_2b})m} +\frac{2{\mL}(m+1)}{\rho_2b(1-\tfrac{3{\mL}}{\rho_2b})m}\right](\psi(\tilde{x}^s)-\psi^\star).
\end{align*}
By the choice of $\rho_2$, it holds $\frac{2{\mL}(m+1)}{\rho_2bm}(1-\tfrac{3{\mL}}{\rho_2b})^{-1} \leq 1-2\theta$ for all $m\in\N$. 
Finally, if $\delta_s$ is sufficiently small and $m$ is sufficiently large such that $\tfrac{1+\tilde{c}(\alpha)m\delta_s^2}{\mu\alpha m}(1-\tfrac{3{\mL}}{\rho_2b})^{-1} \leq \theta$, then we can conclude $\E[\psi(\tilde{x}^{s+1})-\psi^\star]  \leq (1-\theta) \E[\psi(\tilde{x}^s)-\psi^\star]$.
\end{proof}
\begin{remark}
In the case $\mu_\phi > 0$, $\rho_1$ can be chosen small enough such that $\beta(\rho_1)\geq 1$ and we obtain the term $1-\tfrac{2{\mL}}{\rho_2b}$ instead of $1-\tfrac{3{\mL}}{\rho_2b}$ in the latter computations.
\end{remark}

\section{Parameter Choices}
\label{sec:table-hyperparams}
Here, we report details on the tuning procedure for \cref{sec:logreg_comparison} and \cref{sec:experiments_student}.
For each method and each dataset, we first identify a candidate interval of step sizes. We then choose a range of step sizes $\alpha$ on this interval (typically $5$--$7$ values) and perform grid search over $2$--$3$ different batch sizes $b$. For \texttt{SAGA}, we additionally try $b=1$.
We select the combination of $\alpha$ and $b$ that performed best in terms of the objective function sub-optimality over three runs. We observe that \texttt{AdaGrad} typically requires larger batch sizes than \texttt{SNSPP}/\texttt{SVRG}, which might be due to a missing variance reduction mechanism in \texttt{AdaGrad}.
\begin{table}[h!]
	\centering
	{\small
	\begin{tabular}{lcccccccc} 
		\cmidrule[1pt](){1-9}
		 \multicolumn{1}{c}{Dataset} 		& \multicolumn{2}{c}{\texttt{SNSPP}} & \multicolumn{2}{c}{\texttt{SAGA}} & \multicolumn{2}{c}{\texttt{SVRG}} & \multicolumn{2}{c}{\texttt{AdaGrad}} \\

		& $\alpha$ & $b$                       & $\alpha$ & $b$                      & $\alpha$ & $b$                      & $\alpha$ & $b$ \\
		\cmidrule(r){1-1} \cmidrule(lr){2-3} \cmidrule(lr){4-5} \cmidrule(lr){6-7} \cmidrule(l){8-9} 
		\texttt{mnist}  & 2.5 & 280 
						& 0.04 & 56 	
						& 0.25 & 280 				 
						& 0.030 & 2800 \\
		
		\texttt{gisette}  & 7.0 & 240 						
						  & 1.20$\cdot 10^{-3}$ & 1
						  & 0.022 & 50
						  & 0.028 & 240 \\
		
		\texttt{sido0}  & 30.0 & 50
					    & 0.20 & 10 
					    & 0.733 & 50 
					    & 0.0150 & 200 \\
		
		\texttt{covtype}  & 50.0 & 50
		    			  & 0.25 & 50 	
		    			  & 0.350 & 50 				
		    			  & 0.10 & 250 \\[0.5ex]
		\hline \\[-1.5ex]
		\texttt{student-t} $(\hat{\nu}=0.5)$  & 1.05 & 20 			
											  & 2.50$\cdot 10^{-3}$ & 1 	
											  & 0.140 & 40 		
											  & 0.032 & 40 \\
		\texttt{student-t} $(\hat{\nu}=1)$  & 3.0 & 20 	
							                & 0.015 & 4
							                & 0.120 & 20 	
							                & 0.030 & 20 \\
		
		\texttt{student-t} $(\hat{\nu}=2)$ & 7.0 & 20 
								           & 0.20 & 20 
								           & 0.40 & 20
								           & 0.032 & 40 \\
		
		\texttt{student-t sido0} & 5.5 & 200 
		 						 & 0.004 & 1 
		 						 & 0.0145 & 10 	
		 						 & 0.015 & 100\\
		\cmidrule[1pt](){1-9}
	\end{tabular}}
	\caption{Step size and batch size values for the experiments in \cref{sec:logreg_comparison} and \cref{sec:experiments_student}.}
	\label{table2:hyperparams}
\end{table}
\section{Additional Plots}
\begin{figure*}[h!]
	\centering
	\begin{subfigure}[t]{0.24\textwidth}
		\centering
		\includegraphics[width=\textwidth]{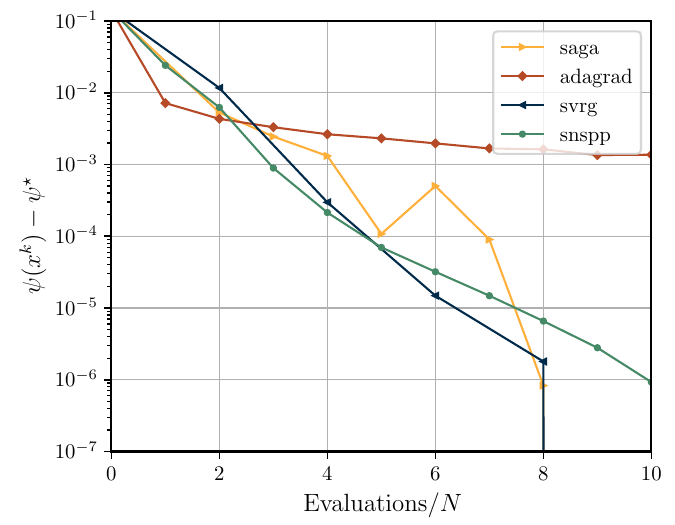}
		\caption{\texttt{mnist}}
		\label{fig:obj2_mnist}
	\end{subfigure}
	\hfill
	\begin{subfigure}[t]{0.24\textwidth}  
		\centering 
		\includegraphics[width=\textwidth]{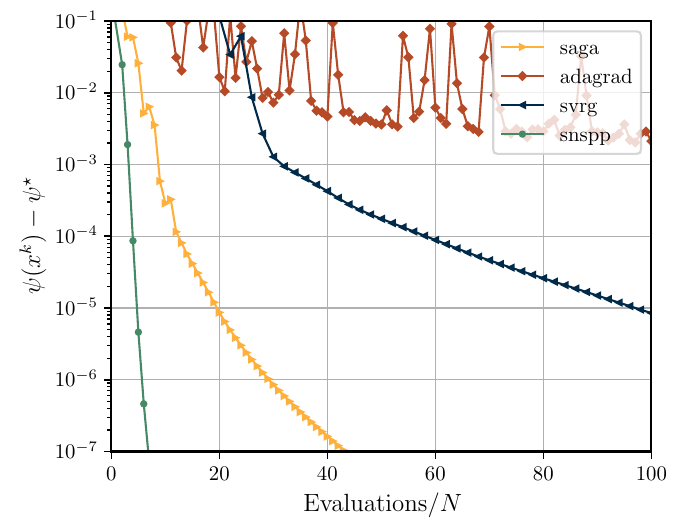}   
		\caption{\texttt{gisette}}
		\label{fig:obj2_gisette}
	\end{subfigure}
	\begin{subfigure}[t]{0.24\textwidth}   
		\centering 
		\includegraphics[width=\textwidth]{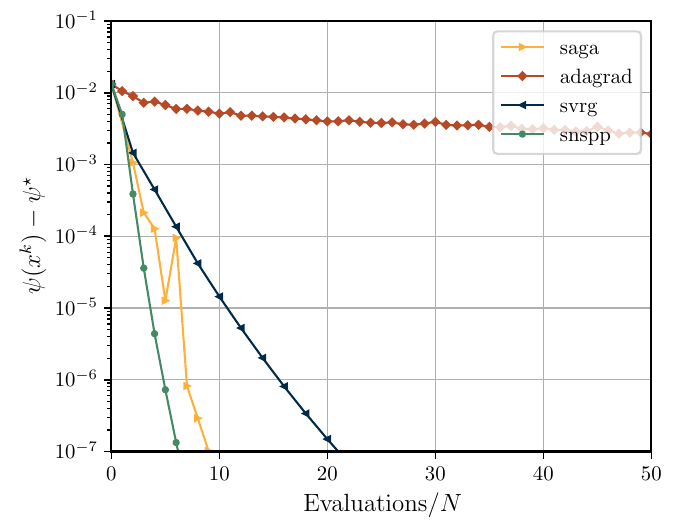}
		\caption{\texttt{sido0}}
		\label{fig:obj2_sido1}
	\end{subfigure}
	\hfill
	\begin{subfigure}[t]{0.24\textwidth}   
		\centering 
		\includegraphics[width=\textwidth]{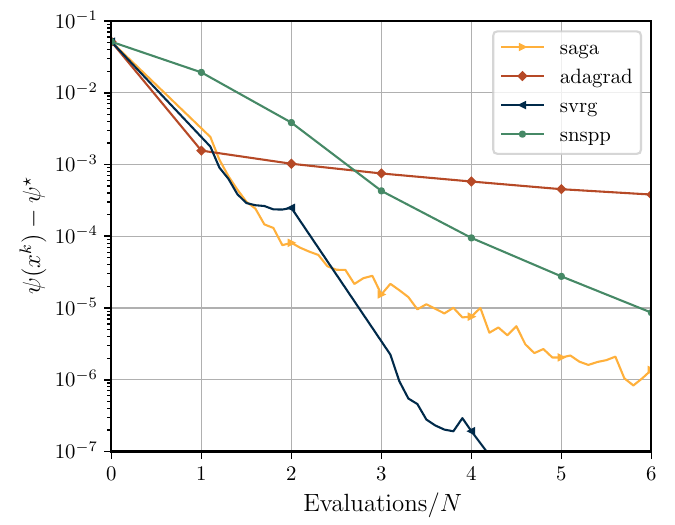}
		\caption{\texttt{covtype}}
		\label{fig:obj2_covtype}
	\end{subfigure}
	\caption{Objective function convergence for the logistic regression datasets with respect to number of gradient evaluations. All settings are identical to \cref{fig:logreg_obj}.
	}
	\label{fig:logreg_obj2}
\end{figure*}
\begin{figure}[t]
	\centering
	\begin{subfigure}[h!]{0.42\textwidth}
		\centering
		\includegraphics[width=\textwidth]{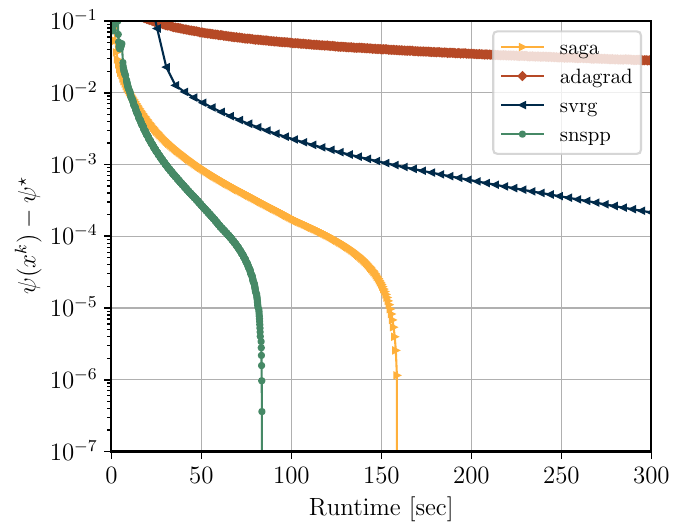}
		\label{fig:madelon_obj}
	\end{subfigure}
	\hfill
	\begin{subfigure}[h!]{0.42\textwidth}  
		\centering 
		\includegraphics[width=\textwidth]{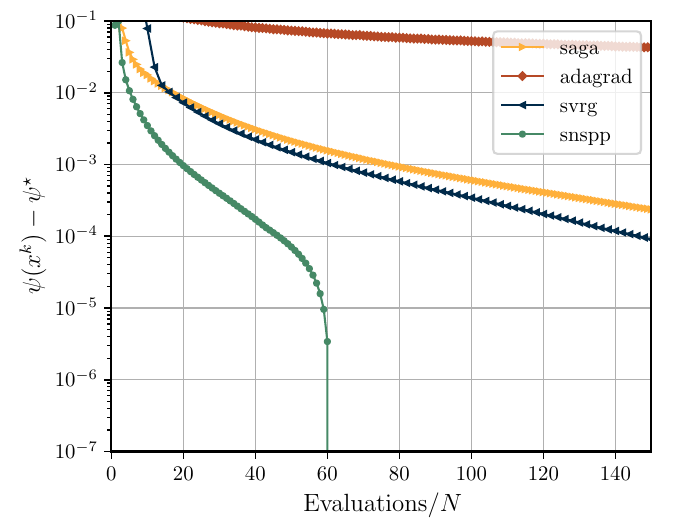}   
		\label{fig:madelon_obj2}
	\end{subfigure}
	\caption{Convergence plot for logistic regression on the \texttt{madelon.2} dataset, with respect to runtime (left) and number of gradient evaluations (right).}
	\label{fig:madelon}
\end{figure}
%

%
%

\bibliographystyle{siamplain}
\setlength{\bibindent}{8pt}
\bibliography{spp_library}
\end{document}